\documentclass[letterpaper,11pt]{amsart}
\usepackage[margin=1in]{geometry}

\usepackage{enumerate,tikz,amsmath,hyperref}

\theoremstyle{plain}
\newtheorem{theorem}{Theorem}[section]
\newtheorem*{theorem1.1}{Theorem 1.1}
\newtheorem*{theorem1.2}{Theorem 1.2}
\newtheorem{lemma}[theorem]{Lemma}
\newtheorem{corollary}[theorem]{Corollary}
\newtheorem{proposition}[theorem]{Proposition}
\newtheorem{example}[theorem]{Example}

\theoremstyle{definition}
\newtheorem*{definition*}{Definition}
\newtheorem{definition}[theorem]{Definition}
\theoremstyle{remark}
\newtheorem{remark}[theorem]{Remark}

\numberwithin{equation}{section}

\newcommand\remove[1]{}

\def\M{\mathcal{M}}
\def\N{{\mathbb{N}}}
\def\R{\mathbb{R}}
\def\Z{\mathbb{Z}}

\def\Pr{\mathbb{P}}

\def\f2{\mathbb{F}_2}
\def\one{\mathsf{1}}

\def\sbs{\subseteq}
\def\eps{\varepsilon}
\def\os{\oslash}

\def\V{\hskip0.01cm{\rm V}}
\def\dist{\hskip0.02cm{\rm dist}\hskip0.01cm}

\def\supp{\hskip0.02cm{\rm supp}\hskip0.01cm}
\newcommand{\tc}{{\rm TC}\hskip0.02cm}

\newcommand{\ep}{\varepsilon}

\newcommand{\diam}{{\rm diam}\hskip0.02cm}

\DeclareMathOperator{\EMD}{EMD}
\DeclareMathOperator{\TC}{TC}
\def \intr {\mathrm{int}}

\def \E {\mathcal{E}}
\DeclareMathOperator{\Per}{Per}
\def \sbs {\subseteq}

\newcommand{\cg}[1]{{\color{blue}#1}}
\newcommand{\cgtwo}[1]{{\color{orange}#1}}

\begin{document}

\title{Lower Estimates for $L_1$-Distortion of Transportation Cost Spaces}

\author[Chris Gartland]{Chris Gartland$^1$}
\address{Department of Mathematics and Statistics, University of North Carolina at Charlotte, University City Blvd, Charlotte, NC 28223, U.S.A.}

\author{Mikhail Ostrovskii}
\address{Department of Mathematics, St. John's University, New York, NY, USA}

\thanks{$^1$The first named author was supported by the National Science Foundation under Grant Number DMS-2546184}

\begin{abstract} Quantifying the degree of dissimilarity between two probability distributions on a finite metric space $X$ is a fundamental task in Computer Science and Computer Vision. A natural dissimilarity measurement based on optimal transport is the earth mover's distance (EMD), also known as the Kantorovich metric or Wasserstein-1 metric. We denote the metric space of probability measures on $X$ equipped with the earth mover's distance as $\EMD(X)$, called the earth mover's space. A key technique for analyzing this metric -- pioneered by Charikar \cite{Cha02} and Indyk and Thaper \cite{IT03} -- involves constructing low-distortion embeddings of $\EMD(X)$ into the Lebesgue space $L_1$. The best upper bound for the distortion of an embedding of $\EMD(X)$ with $|X|=n$ into $L_1$ is $O(\log n)$. This result follows from a combination of Charikar’s work (which builds on \cite{KT02}) and the seminal result by Fakcharoenphol, Rao, and Talwar \cite{FRT04}. Moreover, it is well known that expander graphs yield a matching lower bound of $\Omega(\log n)$ for $L_1$-distortion, showing that the upper bound can be tight.

It became a key problem to investigate whether the upper bound of $O(\log n)$ can be improved for important classes of metric spaces known to admit low-distortion embeddings into $L_1$. In the context of Computer Vision, grid graphs — especially planar grids — are among the most fundamental. Indyk posed in \cite[Problem 2.17]{MN11} the related problem of estimating the $L_1$-distortion of the space of uniform distributions on $n$-point subsets of $\R^2$. The Progress Report in \cite{MN11}, last updated in August 2011, highlighted two key results: first, the work of Khot and Naor \cite{KN06} on Hamming cubes, which showed that the $L_1$-distortion of $\EMD(\{0,1\}^n)$ is of the order $n$, and second, the result of Naor and Schechtman \cite{NS07} for planar grids, which established that the $L_1$-distortion of $\EMD(\{0,\dots,n\}^2)$ is $\Omega(\sqrt{\log n})$.

Our first result is the improvement of the lower bound on the $L_1$-distortion of $\EMD(\{0,\dots,n\}^2)$ to $\Omega\left(\log n\right)$, matching the universal upper bound up to multiplicative constants. The key ingredient allowing us to obtain these sharp estimates is a new Sobolev-type inequality for scalar-valued functions on the grid graphs. Our method is also applicable to many recursive families of graphs, such as diamond and Laakso graphs. We obtain the sharp distortion estimates of $\log n$ in these cases as well.
\end{abstract}

\keywords{diamond graphs, earth mover's distance, grid graphs, metric embeddings, slash product, Sobolev inequality, Wasserstein metric}

\subjclass[2020]{51F30 (05C76, 30L05, 46B03, 49Q22, 68R12, 68W25)}

\maketitle
\vspace{-.5in}
\tableofcontents

\section{Introduction} \label{sec:intro}
Let $(X,d)$ be a finite metric space and $\M_0(X)$ the set of signed measures on $X$ with 0 total mass, meaning $\mu(X) = 0$ for $\mu \in \M_0(X)$. A natural and important interpretation of such a measure is considering it as a {\it transportation problem}: one needs to transport a certain product from locations $x \in X$ where $\mu(\{x\})>0$ to locations $y \in X$ where $\mu(\{y\})<0$. One can easily see that a transportation problem $\mu$ can be represented as
\begin{equation}\label{E:TranspPlan} \mu=a_1(\delta_{x_1}-\delta_{y_1})+a_2(\delta_{x_2}-\delta_{y_2})+\dots+ a_n(\delta_{x_n}-\delta_{y_n}),\end{equation}
where $a_i \ge 0$, $x_i,y_i\in X$, and $\delta_x$ is the unit point mass measure at $x \in X$. We call each such representation a {\it transportation plan} for $\mu$; it can be interpreted as a plan of moving $a_i$ units of the product from $x_i$ to $y_i$. The {\it cost} of the transportation plan \eqref{E:TranspPlan} is defined as $\sum_{i=1}^n a_id(x_i,y_i)$. The {\it transportation cost norm} $\|\mu\|_{\tc}$ on the linear space $\M_0(X)$ is the infimum of costs of transportation plans for $\mu$ satisfying \eqref{E:TranspPlan}. The corresponding normed space $(\M_0(X),\|\cdot\|_\tc)$ is called the {\it transportation cost space} of $X$, denoted $\tc(X)$. (Other names commonly used in the literature are {\it  Lipschitz-free space, Kantorovich-Rubinstein space}, or {\it Arens-Eells space}). Kantorovich duality \cite[Particular Case~5.16]{Vil09} gives a dual formulation of the norm  $\|\mu\|_{\tc} = \sup\{|\int_X fd\mu|: f: X \to \R \text{ is 1-Lipschitz}\}$. The theory of transportation cost spaces, introduced by Kantorovich and Gavurin \cite{Kan42, KG49}, was initially developed as a study of special norms related to function spaces on finite metric spaces.

The {\it earth mover's distance} between probability measures $\mu$ and $\nu$ on $X$ is the quantity $\EMD(\mu,\nu) = \|\mu-\nu\|_{\tc}$. We denote the space of probability measures equipped with this metric by $\EMD(X)$. The term has its roots in the work of Monge (1781). It was introduced to Computer Science by Rubner, Tomasi, and Guibas \cite{RTG98,RTG00}, who used the name Earth Mover's Distance to describe this metric in Computer Vision. The importance of $\EMD$ in Computer Vision owes to the fact that when images are represented as probability distributions, the earth mover's distance provides a natural notion of dissimilarity. In this context, the most relevant underlying metric spaces $X$ are the 2-dimensional planar grids $\{0,\dots n\}^2$, as distributions on them model 2-dimensional images.

Charikar \cite{Cha02} and Indyk-Thaper \cite{IT03} noticed that one of the useful ways for studying $\EMD(X)$ on finite metric spaces is by using a low-distortion embedding of $\EMD(X)$ into $L_1$.
 
Recall that the {\it $L_p$-distortion} of a metric space $(X,d)$ is the infimal $D \geq 1$ such that there exists a measure space $(\Omega,\mathcal{A},M)$ and a map $f: X \to L_p(M)$ with distortion $D$, meaning $d(x,y) \leq \|f(x)-f(y)\|_p \leq Dd(x,y)$ for every $x,y\in X$. This quantity is denoted by $c_p(X)$. 

An important starting point for the Charikar-Indyk-Thaper results: There exists an absolute constant $C<\infty$ such that for any $n$-element metric space $X$, the inequality
\begin{equation} \label{eq:O(log(n))}
    c_1(\EMD(X))\le C\log n
\end{equation}
holds. This result was proved in two steps:
 
(1) Charikar \cite{Cha02} (implicitly) proved that $c_1(\EMD(X))$ is bounded by the stochastic distortion of $X$ into dominating tree metrics. This was observed by Indyk-Thaper \cite[page~3]{IT03}. The theory of stochastic approximation by dominating trees was initiated by Bartal \cite{Bar96}.
 
(2) Fakcharoenphol-Rao-Talwar \cite[Theorem~2]{FRT04}: There is an absolute constant $C<\infty$ such that the stochastic dominating tree distortion is $\le C\log n$ for each $n$-element metric space $X$.

It became very important to determine whether the universal estimate $O(\log n)$ can be improved for standard families of finite metric spaces. It was immediately obvious that it is not improvable for spaces with the maximal order of $L_1$-distortions, for example expander graphs. Thus, the main focus is on families with uniformly bounded $L_1$-distortions, especially ones most relevant to Computer Vision, like the planar grids $\{0,\dots n\}^2$.

The main achievements in this direction since then were:

(1) A tight lower bound for the Hamming cubes $c_1(\EMD(\{0,1\}^n))= \Omega(n) = \Omega(\log |\{0,1\}^n|)$ was obtained by Khot-Naor in \cite[Corollary~2 on p.~831]{KN06}.\footnote{Here and throughout, we write $|A|$ to denote the cardinality of $A$ whenever $A$ is a set.}

(2) For planar grids, the lower estimate $c_1(\EMD(\{0,\dots,n\}^2))= \Omega(\sqrt{\log n})$ was obtained by Naor-Schechtman \cite[Theorem~1.1]{NS07}.

(3) Baudier-Gartland-Schlumprecht \cite[Theorem~A]{BGS23} obtained a similar estimate \\$c_1(\EMD(D_n)) = \Omega(\sqrt{n}) = \Omega(\sqrt{\log|V(D_n)|})$ for the diamond graphs $D_n=(V(D_n),E(D_n))$.

Our goal is to advance significantly the results (2) and (3). See Theorems \ref{thm:c1(TC(grids))} and \ref{thm:c1(TC(slash))} below. Namely, (i) we achieve the optimal order of distortion, (ii) our proofs use less technical machinery than the results in \cite{NS07} and \cite{BGS23}, and (iii) we prove the estimate for many more recursive families of graphs, in addition to diamond graphs.

{\bf Reduction to linear maps:}
An important part of the Naor-Schechtman proof is a reduction to linear maps into finite-dimensional $\ell_1^m$ spaces \cite[Lemma~3.1]{NS07}. That is, if we let $c_{1,{\rm lin}}(\tc(X))$ denote the infimal distortion among all $m\in\N$ and all {\it linear} maps $f: \tc(X) \to \ell_1^m$ whenever $X$ is a finite metric space, then
\begin{equation}\label{E:NSred}
    c_1(\EMD(X)) = c_1(\tc(X)) = c_{1,{\rm lin}}(\tc(X)).
\end{equation}
Naor-Schechtman stated this reduction for $X$ being a planar grid, but their proof in fact works for any finite metric space. We present the proof in \S\ref{sec:NSred}. 
 We will take advantage of this reduction and henceforth work with $\tc(X)$ in place of $\EMD(X)$.

Note that combining \eqref{eq:O(log(n))} and \eqref{E:NSred} we get
\begin{equation}\label{E:UpperFRT}
    c_1(\tc(X))=O(\log|X|).
\end{equation}

\begin{figure}
\centering
\begin{minipage}{0.4\textwidth}
\centering
\begin{tikzpicture}[scale=1.8, every node/.style={circle,draw,fill=white,inner sep=1pt}]
  \node[minimum size=8pt] (s) at (-1,0) {\color{blue}\small$s_D$};
  \node[minimum size=8pt] (t) at (1,0) {\color{blue}\small$t_D$};
  \node[minimum size=8pt] (0+) at (0,1) {};
  \node[minimum size=8pt] (0-) at (0,-1) {};

  \node[draw=none,fill=none] at (0,0) {~\remove{\large$\emptyset$}};  
  
  \draw (s) -- (0+) -- (t) -- (0-) -- (s);
\end{tikzpicture}
\end{minipage}%
\begin{minipage}{0.4\textwidth}
\centering

\begin{tikzpicture}[scale=1.8, every node/.style={circle,draw,fill=white,inner sep=1pt}]
  \node[minimum size=8pt] (s) at (-1,0) {\color{blue}\small$s_D$};
  \node[minimum size=8pt] (t) at (1,0) {\color{blue}\small$t_D$};
  \node[minimum size=8pt] (0+) at (0,1) {};
  \node[minimum size=8pt] (0-) at (0,-1) {};
  
  \node[minimum size=6pt] (1+) at (-1/2-1/4,1/2+1/4){};
  \node[draw=none,fill=none] at (-.5,.5) {~\remove{1}};
  \node[minimum size=6pt] (1-) at (-1/2+1/4,1/2-1/4){};
  
  \node[minimum size=6pt] (2+) at (1/2+1/4,1/2+1/4){};
  \node[draw=none,fill=none] at (.5,.5) {~\remove{2}};
  \node[minimum size=6pt] (2-) at (1/2-1/4,1/2-1/4){};
  
  \node[minimum size=6pt] (4+) at (1/2+1/4,-1/2-1/4){};
  \node[draw=none,fill=none] at (.5,-.5) {~\remove{4}};
  \node[minimum size=6pt] (4-) at (1/2-1/4,-1/2+1/4){};
  
  \node[minimum size=6pt] (3+) at (-1/2-1/4,-1/2-1/4){};
  \node[draw=none,fill=none] at (-.5,-.5) {~\remove{3}};
  \node[minimum size=6pt] (3-) at (-1/2+1/4,-1/2+1/4){};

  \draw (s) -- (1+) -- (0+) -- (1-) -- (s);
  \draw (0+) -- (2+) -- (t) -- (2-) -- (0+);
  \draw (t) -- (4+) -- (0-) -- (4-) -- (t);
  \draw (0-) -- (3+) -- (s) -- (3-) -- (0-);
\end{tikzpicture}
\end{minipage}%
\caption{The first two diamond graphs}
\label{fig:diamonds}
\end{figure}
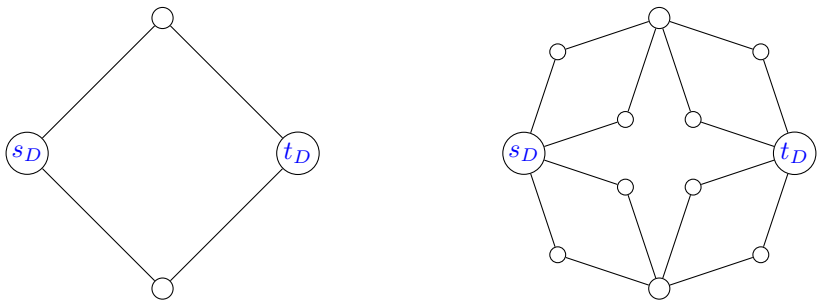

{\bf On $\os$ powers (slash powers):} The {\it diamond graphs} form a recursive family of graphs that play a significant role in various scientific areas, including Physics \cite{BO79,KG81}, Analysis \cite{JS09,OR17}, and Computer Science \cite{IW91, GNRS04, NR03, BC05, LN04}. See Figure~\ref{fig:diamonds}.
They fall into a class of graphs that we refer to as {\it $\os$ powers} (using notation originating in \cite{LR10}). The {\it $\os$ product} is a binary operation on graphs, accepting as input two $s$-$t$ graphs and returning another $s$-$t$ graph, where an {\it $s$-$t$} graph is a directed graph $H$ equipped with two distinguished vertices called $s_H$ and $t_H$. Specifically, for two $s$-$t$ graphs $G$ and $H$, the graph $G \os H$ is obtained by replacing every edge $e$ of $G$ with a copy of $H$, where the initial vertex of $e$ is replaced by $s_H$, and the terminal vertex of $e$ is replaced by $t_H$. If $G$ is an $s$-$t$ graph, the $n$th $\os$ power of $G$ is defined by the recursive formula $G^{\os 1} := G$ and $G^{\os n} = G^{\os (n-1)} \os G$. In this article, we will only consider $\os$ powers in the restricted setting where every edge in the base graph $G$ belongs to a directed geodesic from $s$ to $t$, and $G$ contains a cycle. See Definitions \ref{D:EdgeRep} and \ref{D:SlashPr} for detailed definitions of the notions we use. The most famous sequences of $\os$ powers are the diamond graphs mentioned above and the {\it Laakso graphs} $\{La_n\}_{n=1}^\infty = \{La_1^{\os n}\}_{n=1}^\infty$, first defined by Lang-Plaut \cite[Figure~2]{LP01} who were inspired by an earlier construction of Laakso \cite{Laa02}. See Figure~\ref{F:Laakso} for a picture of the first Laakso graph $La_1$. Lang and Plaut's construction of Laakso graphs was motivated by an embedding problem in Analysis, and they currently play a significant role both in that field and in Computer Science \cite{LN04, JS09, CK13, NPS20, DKO21, NY22}.

\begin{figure}
\begin{center}
{
\begin{tikzpicture}
  [scale=.25,auto=left,every node/.style={circle,draw}]
  \node[minimum size=8pt] (s) at (0,16) {\color{blue}$s$};
  \node[minimum size=8pt] (t) at (32,16) {\color{blue}$t$};
  
  \node (n2) at (8,16)  {\hbox{~~~}};
  \node (n3) at (16,12) {\hbox{~~~}};
  \node (n4) at (16,20)  {\hbox{~~~}};
  \node (n5) at (24,16)  {\hbox{~~~}};

\foreach \from/\to in {s/n2,n2/n3,n2/n4,n4/n5,n3/n5,n5/t}
    \draw (\from) -- (\to);

\end{tikzpicture}
} \caption{The first Laakso graph $La_1$.}\label{F:Laakso}
\end{center}
\end{figure}
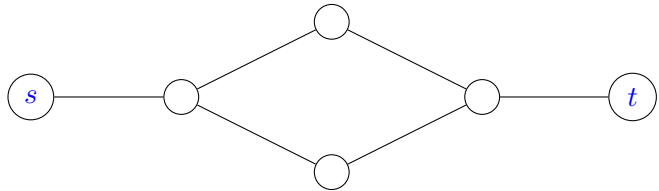

Dilworth, Kutzarova, and Ostrovskii \cite{DKO20} initiated the study of transportation cost spaces on $\os$ powers. For a large class of such families $\{G_n\}_{n=1}^\infty$, they proved lower estimates of the form $d_{\rm BM}(\tc(G_n),\ell_1^{m(n)})=\Omega\left(\frac{\log m(n)}{\log\log m(n)}\right)$ on the Banach-Mazur distance. Here, $m(n) = \dim(\tc(G_n))$, and the {\it Banach-Mazur distance} $d_{\rm BM}(V,W)$ between two normed spaces $V,W$ of the same finite dimension is the infimal distortion among all linear isomorphisms between $V$ and $W$. For diamond graphs $\{D_n\}_{n=1}^\infty$, the estimate of \cite{DKO20} was exact up to a constant,
$d_{\rm BM}(\tc(D_n),\ell_1^{m(n)})=\Theta(\log m(n))$. In the same article, Dilworth, Kutzarova, and Ostrovskii had proposed the problem of finding lower bounds on $c_1(\tc(D_n))$ \cite[Problem~6.6]{DKO20}. Since then, the only result on the problem is the previously mentioned work \cite{BGS23} yielding  $c_1(\tc(D_n))=\Omega(\sqrt{\log|V(D_n)|})$.

\subsection{Statement of Main Results}
Our first main result is an optimal improvement on Naor-Schecht\-man's lower bound for the planar grids, as we show that the $L_1$-distortion of $\tc(\{0,\dots n\}^2)$ matches the universal upper bound.

\begin{theorem} \label{thm:c1(TC(grids))}
$c_1(\tc(\{0,\dots,n\}^2))= \Theta(\log n)$.
\end{theorem}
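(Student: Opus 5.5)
The upper bound is immediate from \eqref{E:UpperFRT}: $|\{0,\dots,n\}^2| = (n+1)^2$, so $c_1(\tc(\{0,\dots,n\}^2)) = O(\log n)$. All the work is in the lower bound. By the reduction \eqref{E:NSred}, it suffices to show that every linear map $T:\tc(\{0,\dots,n\}^2)\to \ell_1^m$ with $\|\mu\|_\tc\le\|T\mu\|_1\le D\|\mu\|_\tc$ has $D=\Omega(\log n)$. Writing $T\mu=(\langle f_j,\mu\rangle)_{j\le m}$ for functions $f_j$ on the grid, we get $\|T\mu\|_1=\sum_j|\langle f_j,\mu\rangle|$. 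Thus $T$ amounts to a finite family of scalar functions, and distortion $D$ says:
\begin{itemize}
\item for every edge $e=(u,v)$ of the grid, $\sum_j |f_j(u)-f_j(v)|\le D$ (upper bound on dipoles);
\item for suitable test measures $\mu$, $\sum_j|\langle f_j,\mu\rangle|\ge\|\mu\|_\tc$.
\end{itemize}
By rescaling the grid we may take the edge length to be $1/n$ and normalise $\operatorname{diam}=O(1)$.

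The plan is a scalar Sobolev-type inequality that is summed over $j$. We pick a family of test measures $\mu_k$, $k=1,\dots,\log n$, at dyadic scales $2^{-k}$. For instance, $\mu_k$ is the normalised sum over all dyadic squares $Q$ of side $2^{-k}$ of a mean-zero measure supported in $Q$: uniform measure on the left half of $Q$ minus uniform measure on the right half, or a ``horizontal minus vertical'' pairing of dipoles. We need two properties of these measures.
\begin{itemize}
\item \textbf{Lower bound.} $\|\mu_k\|_\tc\gtrsim 1$ for each $k$, checked via Kantorovich duality with an explicit $1$-Lipschitz test function. We also want the sum-type estimate $\|\mu_k\|$ to hold for sign-combinations, so that we can use an average $\mathbb{E}_\varepsilon\|\sum_k\varepsilon_k\mu_k\|_\tc$ or simply $\sum_k\|\mu_k\|_\tc\gtrsim\log n$.
\item \textbf{Upper bound.} The key inequality, for every scalar $f$ on the grid, is
$$\sum_{k=1}^{\log n}|\langle f,\mu_k\rangle|\le C\,\frac1{n^2}\sum_{e}|\nabla_e f|\cdot n \;=\; C\|\nabla f\|_{L_1}.$$
Here the right side is the normalised $L_1$-norm of the discrete gradient, which equals $\|\sum_e \pm\delta\text{-dipoles}\|$ paired with the uniform edge measure.
\end{itemize}

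Granting this, summing over $j$ gives
$$\log n\lesssim\sum_k\|\mu_k\|_\tc\le\sum_k\sum_j|\langle f_j,\mu_k\rangle|\le C\sum_j\|\nabla f_j\|_{L_1}=C\,\mathbb{E}_e\sum_j|f_j(u)-f_j(v)|\,n\le CD,$$
where the last step averages the edge bound over the uniform edge measure (edge length $1/n$). This yields $D=\Omega(\log n)$. The crucial point is that the inequality is \emph{scalar} and \emph{linear in $|\cdot|$ per scale with no loss}, which is exactly what linearity of $T$ buys us. This is why this route beats the $\sqrt{\log n}$ of Naor--Schechtman, who went through a cotype/$L_1$ Poincaré-type argument.

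The main obstacle is the Sobolev inequality $\sum_k|\langle f,\mu_k\rangle|\lesssim\|\nabla f\|_{L_1}$ with the $\mu_k$ still having transportation norm $\gtrsim1$. A single-direction test (left minus right halves) fails: $f(x,y)=x$ makes every $|\langle f,\mu_k\rangle|\approx 1$, giving $\log n$ on the left against $1$ on the right. So the $\mu_k$ must be chosen orthogonal to affine functions, for example built from second-order differences. Possible choices are a quadrupole on each square (the $+,-,-,+$ pattern on quadrants), or a combination of $\tfrac12(\delta_{\text{corners}})$ minus the centre pattern, for which linear $f$ vanish. Each scale then sees only the ``curvature'' of $f$ at that scale.

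To bound $\sum_k|\langle f,\mu_k\rangle|$ by total variation, we use the coarea formula to reduce to indicator functions $f=1_A$, with $\|\nabla 1_A\|_{L_1}=\operatorname{Per}(A)$. We then need $\sum_k|\langle 1_A,\mu_k\rangle|\lesssim\operatorname{Per}(A)$. This is a geometric statement: a boundary curve of length $L$ interacts with squares at scale $2^{-k}$ whose contributions cancel unless the boundary is ``non-flat'' there. Handling it requires a $\beta$-number / Jones-traveling-salesman type bound, $\sum_Q \beta(Q)\,\ell(Q)\lesssim L$, with $L_1$ rather than $L_2$ summability. Orientations are chosen randomly per square to enable cancellation. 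This reduction is the step I would try first, and it is where I expect the real difficulty to lie.
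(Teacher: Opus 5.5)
Your outer framework is sound and matches the paper: the upper bound from \eqref{E:UpperFRT}, linearization via \eqref{E:NSred}, and summing a scalar Sobolev-type inequality over the coordinate functionals $f_j$ is the mechanism of Theorem~\ref{thm:c1(TC)}. The gap is the crux you identified, the inequality $\sum_k|\langle f,\mu_k\rangle|\le C\|\nabla f\|_{L_1}$ with a \emph{single} measure $\mu_k$ per scale and $\|\mu_k\|_{\TC}\gtrsim 1$. It is not proved, your candidates violate it, and the proposed tool is false.

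\emph{Affine cancellation is the wrong target.} Annihilating affine functions does not help, because after coarea you test against indicators, and perfectly flat boundaries already give non-cancelling contributions. Take the $(+,-,-,+)$ quadrupole $g$ and $A=\{x+y<c\}$. The squares of side $2^{-k}$ cut by the line form two diagonal rows with relative offsets $r_k$ and $r_k+1$. With $h(s)=\int_{Q\cap\{x+y<s\}}g$, every adjacent pair contributes the same amount, $\propto h(r_k)+h(r_k+1)=r_k(1-2r_k)$ for $r_k\le\frac12$. For $c=\frac13$ this gives $|\langle\one_A,\mu_k\rangle|\gtrsim1$ at every scale, while $\Per(A)=O(1)$. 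Corners-minus-centre fails the same way on horizontal half-planes.

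\emph{The $\beta$-number route is false.} Jones's theorem controls only $\sum_Q\beta(Q)^2\ell(Q)$, and the $L_1$ version fails: oscillations of relative amplitude $K^{-1/2}$ at $K$ scales give length $O(1)$ but $\sum_Q\beta(Q)\ell(Q)\approx\sqrt K$.

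\emph{The structural reason.} If you bound a scale-$k$ sum over squares by the triangle inequality, a boundary of length $L$ meets $\approx L2^k$ squares, each carrying $\lesssim 2^{-k}$. So every scale costs $\approx\Per(A)$, and you lose exactly the $\log n$ you want. A frozen choice of random signs or orientations does not help, since $A$ can be adapted to it. For example, with cross-shaped measures and any fixed signs, take a horizontal boundary sitting just beside the horizontal bars at every $\lceil\log_2 n\rceil$-th scale, with short dips across a bar wherever a sign must flip. It gains $\gtrsim\Per(A)$ at each of $\approx n/\log n$ scales for $o(1)$ extra relative length.

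\emph{The missing idea is an $\ell_2$ norm within each scale.} Keep one measure $\mu_t$ per dyadic square and prove $\sum_k\big(\sum_{t\in T_k}|\int f\,d\mu_t|^2\big)^{1/2}\le C\|\nabla f\|_{L_1}$. Your $\ell_1$ chain then goes through once you average over an orthogonal family of sign patterns. For Walsh--Hadamard rows $\theta_i$ on $T_k$, Cauchy--Schwarz and Parseval give $4^{-k}\sum_i|\langle f,\sum_t\theta_i(t)\mu_t\rangle|\le(\sum_t|\langle f,\mu_t\rangle|^2)^{1/2}$. Lemma~\ref{lem:TCadditive} gives $\|\sum_t\theta_i(t)\mu_t\|_{\TC}\gtrsim1$ for every $i$. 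Hence $\log n\lesssim CD$ exactly as in your display; this is Theorem~\ref{thm:c1(TC)} with Theorem~\ref{thm:(C2)grids}.

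\emph{How the $\ell_2$ inequality is proved.} It comes from counting, not flatness.
\begin{enumerate}
\item Reduce to simply connected $A$ via the extreme points of the $W^{1,1}$ ball (Theorem~\ref{thm:iso-to-Sobolev}); coarea alone does not give connectedness.
\item Then $\partial_VA$ is connected (Lemma~\ref{lem:connectedboundary}). If $|\partial_VA|\approx2^m$, it meets at most one support in total over the coarse scales $k\le n-m-O(1)$.
\item At each finer scale $k$ it meets at most $1+O(2^{m+k-n})$ of the $2^{n-k}$-separated supports, each with $|\mu_t(A)|\lesssim2^{-n-k}$.
\item The $\ell_2$ sums over the fine scales therefore decay geometrically in $k$.
\item The single coarse term is at most $4^{-n}(\diam(A)+1)$, which is $\lesssim\Per(A)$ by Lemma~\ref{lem:diambound} and the connectedness of $\partial_VA$.
\end{enumerate}
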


Let $M_d^n$ denote the subset of $\EMD(\R^d)$ consisting of uniform distributions on $n$-point subsets of $\R^d$. In \cite[Problem~2.17]{MN11}, Indyk posed the problem of finding the asymptotics of $c_1(M_2^n)$. The articles \cite{AIK08,BI14} established results related to upper bounds on $c_1(M_d^n)$, and the algorithmic problem of approximating distances in $M_d^n$ was considered in \cite{CJLW22, JWZ24}. It is plausible that Theorem~\ref{thm:c1(TC(grids))}, or the methods used in its proof, could be useful in attacking Indyk's problem by providing lower bounds on the distortion. Indeed, by using Bourgain's discretization theorem, Giladi, Naor, and Schechtman \cite[Corollary~1.5]{GNS12} were able to directly apply Naor-Schechtman's lower bound $c_1(\EMD(\{0,\dots n\}^2)) = \Omega(\sqrt{\log n})$ to obtain nontrivial lower bounds on $c_1(M_2^n)$ (see also \cite[\S4]{NS07}). In Theorem \ref{T:EMDuniformR2}, we use results of \cite{NS07} and \cite{GNS12} to prove $c_1(M_2^s)=\Omega(\log s)$.

Our second main result is an analogue of Theorem~\ref{thm:c1(TC(grids))} for a large class of $\os$ powers.

\begin{theorem} \label{thm:c1(TC(slash))}
Let $G$ be any $st$-graph that is not an $st$-path and has at least three vertices. Then there exists a constant $C<\infty$ (depending on $G$ but not $n$), such that $c_1(\TC(G^{\os n})) \geq C^{-1}\log |V(G^{\os n})|$ for every $n\geq 0$.    
\end{theorem}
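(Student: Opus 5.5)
By the reduction \eqref{E:NSred}, it suffices to show that every linear map $T:\TC(G^{\os n})\to \ell_1^m$ satisfying $\|\mu\|_{\tc}\le \|T\mu\|_1\le D\|\mu\|_{\tc}$ has $D\gtrsim n$. Since $|V(G^{\os n})|$ grows exponentially in $n$, we have $\log|V(G^{\os n})|\asymp n$, so this is the required bound. Write $T\mu=(\int f_j\,d\mu)_{j\le m}$ for functions $f_j:V\to\R$. Then $\|T\mu\|_1=\sum_j|\int f_j d\mu|$, and the upper bound says that $\sum_j |f_j(x)-f_j(y)|\le D\,d(x,y)$, i.e.\ $\sum_j \mathrm{Lip}$-type edge sums are controlled: $\sum_j\sum_{e\in E}|\nabla_e f_j|\cdot w(e)\le D\sum_e w(e)$ for any edge weights $w$. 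The plan is a duality/averaging argument. We choose a probability distribution $\Pi$ on test measures $\mu$ and a weighting on edges, and prove a \emph{Sobolev-type inequality} valid for every single scalar function $f:V\to\R$:
\begin{equation*}
\mathbb{E}_{\mu\sim\Pi}\frac{|\int f\,d\mu|}{\|\mu\|_{\tc}}\;\le\; \frac{C}{n}\sum_{e}\omega(e)|\nabla_e f|,
\end{equation*}
where $\omega$ is a normalized weighting for which $\sum_e\omega(e)|\nabla_e f|$ is dominated by an average of $|f(x)-f(y)|/d(x,y)$ over pairs. Summing this over $j$ gives $1\le \mathbb{E}\|T\mu\|_1/\|\mu\|_{\tc}\le \frac{C}{n}D$, hence $D\ge n/C$.

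For the test measures I would use the recursive structure. Since $G$ is not a path, it contains a cycle formed by two internally disjoint directed $s$-$t$ geodesic segments. At each level $k\le n$ and in each copy of $G^{\os(n-k)}$ sitting inside $G^{\os n}$, we take the ``cycle measure'' $\mu_{k,\text{copy}}$: the difference of the uniform measures on the middle vertices of the two branches (for diamonds, $\delta_{\text{top}}-\delta_{\text{bottom}}$ in each sub-diamond). Its $\tc$ norm is comparable to the scale $\lambda^{n-k}$, where $\lambda$ is the $s$-$t$ distance in $G$. The left-hand side then becomes $\frac1n\sum_k$ of the average over copies of $|f(u)-f(v)|/\lambda^{n-k}$ for the two "opposite" vertices. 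The key inequality is a multiscale estimate: the sum over scales of these normalized opposite-vertex differences is bounded by the average edge gradient. The reason is that going from $u$ to $v$ at scale $k$ requires two geodesic paths; the differences at different scales concern \emph{different edge sets'} "antisymmetric parts." I would prove this via an orthogonality/telescoping argument. Decompose $f$ along the recursion into level-wise pieces (a martingale-like decomposition with respect to the nested copies). The cycle difference at level $k$ sees only the level-$k$ piece. The $L_1$ edge gradient of $f$ should then dominate the \emph{sum} (not just the max) of level contributions, because the branches at distinct levels use disjoint "directions" of variation, in the spirit of the $\ell_1$ additivity in \cite{DKO20}.

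The main obstacle is exactly this $L_1$ (rather than $L_2$) multiscale inequality. The Naor--Schechtman and \cite{BGS23} approaches lose a square root because they pass through $L_2$/Poincaré-type orthogonality, giving $\sqrt n$. To get the full factor $n$, I would first try an induction on $n$. Let $\Phi_n(f)$ denote the sum of weighted cycle differences and $\Gamma_n(f)$ the average edge gradient. Prove $\Phi_n(f)\le \Gamma_n(f)$ by applying the inductive hypothesis inside each copy of $G^{\os(n-1)}$, using the replacement of each copy by its $s$-$t$ endpoint values plus an averaged path $\bar f$. The top-level cycle term is then paid for by the \emph{slack} between $\Gamma$ of $f$ and $\Gamma$ of the endpoint-averaged function, via a triangle inequality over the two branches. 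The delicate point is choosing the averaging (uniform over geodesics, i.e.\ the natural flow) so that the top-level term is absorbed without constant loss at each level, since losses of a factor $(1+\epsilon)$ per level would be fatal.
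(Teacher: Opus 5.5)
There is a genuine gap: the ``key multiscale inequality'' is false, and no choice of averaging can rescue it while your test measures are single cycle measures. For diamonds these measures are $\delta_{u_c}-\delta_{v_c}$, and your right-hand side also involves only edge differences. So your argument uses $T$ only through the vertex map $F(x)=T(\delta_x-\delta_{x_0})$, and uses noncontractivity only on pairs of points. If it worked, it would show that the vertex set of $D^{\os n}$ itself has $L_1$-distortion $\gtrsim n$. But diamond and Laakso graphs embed into $L_1$ with bounded distortion \cite{GNRS04}, and the linear extension of such an embedding satisfies every inequality you use.

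Here is a concrete failure. Normalize $d(s,t)=1$ and build a cut $A$ with $s\in A$, $t\notin A$, recursively:
\begin{itemize}
\item in each sub-diamond whose endpoints are separated by $A$, put its top vertex on the side of its initial endpoint and its bottom vertex on the side of its terminal endpoint;
\item in every other sub-diamond, put all new vertices on the common side of its endpoints.
\end{itemize}
Each separated sub-diamond of thickness $2^{-k}$ produces exactly two separated sub-diamonds of thickness $2^{-k-1}$. Hence at every level $k$, exactly $2^k$ of the $4^k$ copies have $|\one_A(u_c)-\one_A(v_c)|=1$, while only $2^n$ of the $4^n$ edges are cut. Your left-hand side equals $\frac1n\sum_{k<n}4^{-k}\cdot 2^k\cdot 2^k=1$. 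For uniform $\omega$, $\sum_e\omega(e)|\nabla_e\one_A|=4^{-n}\cdot 2^n\cdot 2^n=1$. The inequality would therefore read $1\le C/n$. The perimeter never grows during this refinement, yet every level contributes a full unit, so the ``slack'' your induction relies on does not exist.

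The missing idea is to apply noncontractivity not to individual cycle measures but to signed sums of all cycle measures of one level. This is where $\TC$ and $\ell_1$ genuinely differ.
\begin{itemize}
\item On the $\TC$ side: same-thickness cycles are pairwise incomparable, so the Bottleneck Lemma~\ref{lem:bottleneck} gives a single $1$-Lipschitz function norming all of them at once. Hence $\|\sum_e\eps_e\mu_e\|_{\TC}\ge\sum_e th(e)d(e)ht(e)$ for \emph{every} choice of signs (Theorem~\ref{thm:TClowerbound}).
\item On the $\ell_1$ side: averaging over Walsh--Hadamard sign patterns and applying Bessel's inequality only yields the $\ell_2$-sum $(\sum_{th(e)=2^{-k}}|\int f\,d\mu_e|^2)^{1/2}$ at level $k$ (Theorem~\ref{thm:c1(TC)}).
\end{itemize}

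The scalar inequality actually needed is therefore the mixed one: $\ell_1$ over levels, $\ell_2$ within a level (Theorem~\ref{thm:Sobolev}). This one is true. For $f=\one_A$, each level's $\ell_1$-sum is at most $\Per(A)$ (Lemmas~\ref{lem:monotonicity} and~\ref{lem:disjointdes}), and each nonzero term equals $2^{-k}$. So the level-$k$ $\ell_2$-sum is at most $2^{-k/2}\Per(A)^{1/2}$. These bounds sum geometrically from the first active level $k_*$, where $2^{-k_*}\le\Per(A)$. General $f$ then follow from the extreme-point reduction (Theorem~\ref{thm:iso-to-Sobolev}). In the cut above, the level-$k$ $\ell_2$-sum is $2^{-k/2}$, consistent with this bound.

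So your diagnosis of the $\sqrt n$ loss should be reversed. The $\ell_2$ orthogonality within a level is indispensable; the gain comes from summing in $\ell_1$ only across levels.

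A smaller point: for general $G$ you also need both branches of the chosen cycle to have interior vertices. The paper arranges this by passing to $G^{\os 2}$ and using an isometric cycle-with-handles.
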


Theorem~\ref{thm:c1(TC(slash))} shows that, like the planar grids, each sequence in this class of graphs exhibits the largest possible $L_1$-distortion, asymptotically.

This result is new for the diamond graphs, as the previously best known lower bound for $c_1(\tc(D_n))$ was $\Omega(\sqrt{n})$. Even more, for many $\oslash$ powers $\{G_n\}_{n=1}^\infty$, including the Laakso graphs, it was not even known before this work whether $\sup_n c_1(\tc(G_n)) = \infty$. We provide good estimates for the constants $C$ of Theorem \ref{thm:c1(TC(slash))} in the case of diamond and Laakso graphs in Corollary \ref{C:DiamLaak}.

\subsection{Organization and Description of Proofs}

Our method of proof is the same for each of Theorem~\ref{thm:c1(TC(grids))} and \ref{thm:c1(TC(slash))}: we prove that if some collection of transportation problems $\{\mu_t\}_{t\in \M}\sbs\tc(G_n)$ satisfies a pair of conditions {\bf(C\ref{C1})}-{\bf(C\ref{C2})} (defined in \S\ref{sec:(C1)-(C2)}), then $\tc(G_n)$ must incur a certain amount of distortion when linearly embedded into an $\ell_1^m$ space. This is the content of Theorem~\ref{thm:c1(TC)}, which is proved in \S\ref{ss:(C1)-(C2)}. This general method is new and likely to find future use in estimating the $L_1$-distortion of other transportation cost spaces.

In \S\ref{sec:grids}, we show that the conditions {\bf(C\ref{C1})}-{\bf(C\ref{C2})} are satisfied for the planar grids $\{0,\dots n\}^2$ for a choice of measures $\{\mu_t\}_{t\in \M}$ that have ``cross-shaped" supports (see Figure~\ref{fig:Gr4}). This is the content of Theorems~\ref{thm:(C2)grids} and \ref{thm:(C1)grids}. Theorem~\ref{thm:c1(TC(grids))} follows from these and Theorem~\ref{thm:c1(TC)} (see Theorem~\ref{thm:c1(TC(Gr))2}). In particular, the satisfaction of {\bf(C\ref{C1})}, proved in Theorem~\ref{thm:(C1)grids} in \S\ref{S:C1}, shows that functions $f: \{0,\dots n\}^2 \to \R$ satisfy a newly defined Sobolev-type inequality. This theorem could be of independent interest. The classical endpoint Sobolev inequality states that the $L_2$-variance of $f$ is bounded by the $W^{1,1}$-norm of $f$, and this inequality was an essential ingredient in Naor-Schechtman's \cite{NS07} proof of $c_1(\tc(\{0,\dots n\}^2)) = \Omega(\sqrt{\log n})$ (in fact they adapted to the discrete setting an earlier argument of Kislyakov \cite[Theorem~3]{Kis75} employing the Sobolev inequality). These arguments inspired us to search for a different Sobolev-type inequality that could yield better distortion lower bounds.

In \S\ref{S:EdgeRepl}, we describe the operations of Edge Replacement (Definition \ref{D:EdgeRep}) and Slash Product (Definition \ref{D:SlashPr}). Then, we describe the choice of measures $\{\mu_t\}_{t\in \M}$, and show that they satisfy conditions {\bf(C\ref{C1})}-{\bf(C\ref{C2})} in a rather general situation. Finally, we use the theory developed in \S\ref{sec:(C1)-(C2)} and \S\ref{S:EdgeRepl} to get the $L_1$-distortion estimates for diamond and Laakso graphs (Corollary \ref{C:DiamLaak}) and a proof of Theorem \ref{thm:c1(TC(slash))}, see page \pageref{T:1.2Rep}.

In \S\ref{sec:NSred}, we provide the proof of the reduction to linear maps described in equation \eqref{E:NSred}. 
\medskip

\subsection{The Basic Idea of the Proof}

The techniques used in the proof of the main step, Theorem \ref{thm:c1(TC)}, resemble the techniques used in many lower distortion bounds. We mean the well-known method based on Poincaré inequalities (see \cite[Chapter 4]{Ost13}). Let us remind the basic idea.

Let $(X,d_X)$ and $(Y,d_Y)$ be metric spaces. Suppose that our goal is to estimate the infimal distortion of embeddings of $X$ into $Y$ from below. To achieve this, we find a subset $U$ of $X$, arrays of real numbers $\{a_{uv}\}_{u,v\in U}$ and $\{b_{uv}\}_{u,v\in U}$, and $p\in[1,\infty)$, such that, on one hand,
\begin{equation}\label{E:PoinGenId}
\sum_{u,v\in U} a_{u,v}(d_X(u,v))^p\le\sum_{u,v\in U}b_{u,v}
(d_X(u,v))^p,
\end{equation}
and, on the other hand, for any mapping $f:X\to Y$
\begin{equation}\label{E:PoinGen}
\sum_{u,v\in U} a_{u,v}(d_Y(f(u),f(v)))^p\ge D\sum_{u,v\in
U}b_{u,v}(d_Y(f(u),f(v)))^p.
\end{equation}
The inequalities above are called {\it Poincar\'e inequalities}. It is not difficult to conclude that inequalities \eqref{E:PoinGenId} and \eqref{E:PoinGen} imply that the distortion of any map of $X$ into $Y$ is at least $D^{1/p}$.

We use a similar method. We also observe that because of the linearization equality \eqref{E:NSred}, we may assume that $f$ is linear. Because of this, we may, in addition, assume that $f$ is noncontractive and $\|f\|\le$\,distortion$\,+\ep$.  

To achieve the optimal distortion rate, in our versions of inequalities \eqref{E:PoinGenId} and \eqref{E:PoinGen}, we had to mix exponents $1$ and $2$ on one side. Because of this, we call the obtained inequality `Sobolev inequality' (let us recall that in the classical Poincaré inequality exponents in both sides are the same, but in the classical Sobolev inequality the exponents are mixed).

We work with the case in which the metric space $G$ is a weighted graph, $G=(V,E,w)$, with a positive weight $w$ and the shortest path distance $d_G$ satisfying $d_G(u,v)=w(uv)$ for each edge $uv$. Furthermore, we assume that there is another positive function $th$ on $E$ called {\it thickness}, satisfying 
\[\sum_{uv\in E(G)} th(uv)d_{G_n}(uv) = 1.\]

Our analogues of the Poincar\'e inequalities are the following.  For some graphs $G$ we find index sets $\M=\sqcup_{k=1}^m\M_k$ and systems $\{\mu_t\}_{t\in \M}\subseteq \tc(G)$ of transportation problems, such that for certain matrices $\{\theta_s(t)\}_{s,t\in \M_k}, \theta_s(t)=\pm 1$, and numbers $\alpha$ and $C$ (independent of $m$), we have (here $e^+$ and $e^-$ are endpoint vertices of the oriented edge $e$)
\begin{equation}\label{E:SobId}\sum_{k=0}^m\left(\frac1{|\M_k|}\sum_{s\in \M_k}\left\|\sum_{t\in \M_k}\theta_s(t)\mu_t\right\|^2_{\tc}\right)^{1/2}
\ge m\cdot\alpha\sum_{e\in E}\|\delta_{e^+}-\delta_{e^-}\|_{\tc}th(e),\end{equation}
and on the other hand, for any linear noncontractive map $f:\tc(X)\to \ell_1^d$, we have
\begin{equation}\label{E:SobGen}\sum_{k=0}^m\left(\frac1{|\M_k|}\sum_{s\in \M_k}\left\|f\left(\sum_{t\in \M_k}\theta_s(t)\mu_t\right)\right\|^2_{\ell_1^d}\right)^{1/2}
\le C\sum_{e\in E}\|f(\delta_{e^+}-\delta_{e^-})\|_{\ell_1^d}th(e).\end{equation}
As for the Poincaré inequalities above, it is easy to see that the distortion estimate $c_1(\tc(G))\ge \frac{m\alpha}C$ follows.
Note that in the argument below, it is convenient to use slightly different notation. The inequality \eqref{E:SobId} follows eventually from \eqref{E:TCtheta} and the inequality \eqref{E:SobGen} can be found inside \eqref{E:Show}.

To prove the main results of the paper, it remains to observe that for some families $\{G_n\}_{n=1}^\infty$ of such graphs, the parameter $m=m(n)$ grows as needed, while the quotient $\frac{\alpha(n)}{C(n)}$ stays bounded from below.

\section{Conditions Giving Lower Estimates for the $L_1$-Distortion of Transportation Cost Spaces}\label{sec:(C1)-(C2)}

Our goal in this section is to find conditions which can be used for nontrivial (often tight) estimates from below for the rate of growth of $\{c_1(\TC(G_n))\}_{n=1}^\infty$ for families $\{G_n\}_{n=1}^\infty$ of graphs with weighted geodesic metrics. We write $V(G)$ to denote the vertex set of the graph $G$, $E(G)$ to denote the edge set, $d_G(u,v)$ to denote the distance between $u,v\in V(G)$. We assume that an orientation on $E(G)$ is chosen (sometimes, we shall specify a choice of orientation, if we do not do this, then the choice is irrelevant in the sense that parameters that are essential for our argument do not depend on it), and we write $uv$ or $(u,v)$ to denote the directed edge with initial vertex $u$ and terminal vertex $v$. We write $d_G(uv) = d_G(u,v)$ to denote the length of an edge $uv\in E(G)$. In Section \ref{S:EdgeRepl}, we will need more notation for directed graphs, and will introduce it there.

We start by showing estimates on the rate of growth of  $\{c_1(\TC(G_n))\}_{n=1}^\infty$ for families satisfying the conditions  {\bf(C\ref{C1})}-{\bf(C\ref{C2})} described below with ``good'' parameters. After that, we provide some important examples where the conditions  {\bf(C\ref{C1})}-{\bf(C\ref{C2})} are satisfied with ``good'' parameters. The conditions  {\bf(C\ref{C1})}-{\bf(C\ref{C2})} are on transportation problems $\{\mu_t\}_{t\in \M}\sbs\TC(G_n)$, where $\M$ is some index set with a partition $\M = \sqcup_{k=0}^{n'} \M_k$.

\begin{enumerate}[{\bf(C1)}]
	\item \label{C1} There exists $C<\infty$ such that, for all $n\geq 0$, there exists $th: E(G_n) \to (0,\infty)$, called a {\it thickness function}, such that
	\begin{equation} \label{eq:probabilitymeasure}
		\sum_{uv\in E(G_n)} th(uv)d_{G_n}(uv) = 1 
	\end{equation}
	and all scalar-valued functions $f: V(G_n) \to \R$ satisfy the Sobolev-type inequality
	\begin{equation}\label{E:Sobolev}
		\sum_{k=0}^{n'}\Big(\sum_{t\in \M_k}\left|\int f d\mu_t\right|^2\Big)^{\frac12} \leq C\sum_{uv\in E(G_n)} |f(u)-f(v)|th(uv).
	\end{equation}
	
	

	\item \label{C2} For each $k \in \{0,\dots n'\}$, there exist $\alpha_k \geq 0$ and an orthogonal system of scalar-valued functions $\{\theta_i: \M_k \to \R\}_{i\in \M_k}$ with $|\theta_i(t)|\leq 1$ for every $i,t \in \M_k$ and satisfying the uniform lower bound
	
	\begin{equation}\label{E:TCtheta}
		\min_{i\in\M_k}\Big\|\sum_{t\in \M_k}\theta_i(t)\mu_t\Big\|_{\TC} \geq \alpha_k.
	\end{equation}
	
\end{enumerate}

\subsection{Estimates for $L_1$-Distortion implied by   {\bf(C\ref{C1})}-{\bf(C\ref{C2})}}
\label{ss:(C1)-(C2)}

\begin{theorem} \label{thm:c1(TC)}
	Let $(G_n)_{n=1}^\infty$ be a sequence of graphs satisfying conditions  {\bf(C\ref{C1})}-{\bf(C\ref{C2})}. Then 
	
	\begin{equation*}
		c_1(\TC(G_n)) \geq C^{-1}\sum_{k=0}^{n'}\alpha_k.
	\end{equation*}
	
\end{theorem}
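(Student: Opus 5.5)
By the reduction \eqref{E:NSred}, it suffices to bound the distortion of linear maps. Fix $n$ and a linear map $T:\TC(G_n)\to \ell_1^d$ with $\|\mu\|_{\TC}\le \|T\mu\|_1\le D\|\mu\|_{\TC}$. Write $T\mu = (\int f_j\, d\mu)_{j=1}^d$, where $f_j:V(G_n)\to\R$; this representation is available because every linear functional on $\M_0(V)$ is integration against some function. The plan is to apply the Sobolev inequality \eqref{E:Sobolev} of {\bf(C\ref{C1})} to each coordinate function $f_j$, sum over $j$, and then compare both sides with the norms of $\TC$.

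\emph{Right-hand side.} Summing \eqref{E:Sobolev} over $j$ gives
\[
\sum_{uv}th(uv)\sum_j|f_j(u)-f_j(v)| = \sum_{uv} th(uv)\,\|T(\delta_u-\delta_v)\|_1 \le D\sum_{uv}th(uv)\,d_{G_n}(uv)=D,
\]
where the last equality is \eqref{eq:probabilitymeasure}.

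\emph{Left-hand side.} Summing over $j$ gives
\[
\sum_k\sum_j\Big(\sum_{t\in\M_k}|\langle f_j,\mu_t\rangle|^2\Big)^{1/2}.
\]
We need a lower bound in terms of the $\alpha_k$ from {\bf(C\ref{C2})}. Fix $k$. The system $\{\theta_i\}_{i\in \M_k}$ is orthogonal with $|\theta_i|\le1$, so $\|\theta_i\|_2^2\le|\M_k|$. Put $v_j=(\langle f_j,\mu_t\rangle)_{t\in\M_k}\in\R^{\M_k}$. Bessel's inequality with respect to the orthonormal basis $\theta_i/\|\theta_i\|_2$ gives
\[
\|v_j\|_2^2\ \ge\ \sum_i \frac{\langle v_j,\theta_i\rangle^2}{\|\theta_i\|_2^2}\ \ge\ \frac1{|\M_k|}\sum_i\langle v_j,\theta_i\rangle^2 .
\]
Hence
\[
\|v_j\|_2\ \ge\ \Big(\frac{1}{|\M_k|}\sum_{i}\Big|\Big\langle f_j,\sum_t\theta_i(t)\mu_t\Big\rangle\Big|^2\Big)^{1/2}.
\]
Summing over $j$ and applying Minkowski's inequality in the form $\sum_j\|(a_{ij})_i\|_{\ell_2}\ge \|(\sum_j|a_{ij}|)_i\|_{\ell_2}$ (the triangle inequality in $\ell_2$), we get
\[
\sum_j\|v_j\|_2\ \ge\ \Big(\frac1{|\M_k|}\sum_i \Big\|T\Big(\sum_t\theta_i(t)\mu_t\Big)\Big\|_1^2\Big)^{1/2}.
\]
Noncontractivity of $T$ and \eqref{E:TCtheta} show that each term $\|T(\cdot)\|_1$ is at least $\alpha_k$. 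The average over $i\in\M_k$ (there are $|\M_k|$ indices) of squares each at least $\alpha_k^2$ is at least $\alpha_k^2$, so the $k$-th block contributes at least $\alpha_k$.

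\emph{Conclusion.} Combining the two sides gives $\sum_k\alpha_k\le C D$, and letting $D\downarrow c_{1,\rm lin}(\TC(G_n))$ yields the claim.

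The main point to verify is the normalisation in the Bessel step. If the $\theta_i$ are not all nonzero, or there are fewer than $|\M_k|$ of them, the average changes. This is why {\bf(C\ref{C2})} indexes the system by $\M_k$ itself, so the system is a full orthogonal family. Each $\theta_i$ must be nonzero; this follows from \eqref{E:TCtheta} whenever $\alpha_k>0$. If $\alpha_k=0$, that block contributes nothing to $\sum_k\alpha_k$ and the bound for it is trivial. The inequality $\|\theta_i\|_2^2\le|\M_k|$ is exactly what makes the factor $1/|\M_k|$ cancel against the $|\M_k|$ terms.
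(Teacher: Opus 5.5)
Your proof is correct and follows the paper's argument. It uses the same four ingredients: Bessel's inequality for the orthogonal system $\{\theta_i\}$ with $\|\theta_i\|_2^2\le|\M_k|$; the Sobolev inequality applied to each coordinate $f_j$ and summed via the triangle inequality in $\ell_2$; the edge bound $\|T(\delta_u-\delta_v)\|_1\le D\,d_{G_n}(uv)$ with \eqref{eq:probabilitymeasure}; and noncontractivity with \eqref{E:TCtheta}. The only difference is cosmetic: the paper applies Bessel first, passing to $\nu_i=|\M_k|^{-1/2}\sum_t\theta_i(t)\mu_t$ to get \eqref{E:Sobolev2}, and then sums over coordinates, whereas you apply the Sobolev inequality to each $f_j$ and then Bessel coordinate by coordinate.
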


\begin{proof}
	Let $k\in\{0,\dots n'\}$. First, setting $\nu_i = \sum_{t\in \M_k}\frac{\theta_i(t)}{\sqrt{|\M_k|}}\mu_t$ and noting that $\left\{\frac{\theta_i}{\sqrt{|\M_k|}}\right\}_{i\in \M_k}$ is an orthogonal set of functions $\M_k \to \R$ with each function having $\ell_2(\M_k)$-norm at most 1, we have
	\begin{equation}\label{E:Parseval}
		\sum_{i\in \M_k}\Big|\int f d\nu_i\Big|^2 \leq \sum_{t\in \M_k}\Big|\int f d\mu_t\Big|^2,
	\end{equation}
	and \eqref{E:TCtheta} implies the estimate 
	\begin{equation}\label{E:TCnu}
		\min_{i\in\M_k}\|\nu_i\|_{\TC} \geq \alpha_k|\M_k|^{-\frac12}.
	\end{equation}
	We combine \eqref{E:Parseval} and \eqref{E:Sobolev} to obtain
	\begin{equation} \label{E:Sobolev2}
		\sum_{k=0}^{n'}\Big(\sum_{i\in \M_k}\Big|\int f d\nu_i\Big|^2\Big)^{\frac 12} \leq C\sum_{uv\in E(G_n)} |f(u)-f(v)|th(uv).
	\end{equation}
	
	Since the right-hand side of \eqref{E:Sobolev2} is an $L_1$-norm, we can derive a corresponding inequality for linear maps taking values in an $\ell_1$-space simply by summing. Indeed, let $L: \TC(G_n) \to \ell_1^d$ be a linear map. Fix a basepoint $x_0 \in V(G_n)$, and define functions $\{f_j: G_n \to \R\}_{j=1}^d$ by $f_j(x) := L(\delta_x-\delta_{x_0})_j$, so that $L(\nu) = (\int f_j d\nu)_{j=1}^d$ for every $\nu \in \TC(G_n)$ (this identity can be proved using the simple fact that $\{\delta_x-\delta_{x_0}\}_{x\in V(G_n)}$ spans $\TC(G_n)$). Then we have
	\begin{equation}\label{E:Show}\begin{split}
			\sum_{k=0}^{n'}\Big(\frac1{|\M_k|}\sum_{i\in \M_k}\Big\|L\Big(\sum_{t\in\M_k}\theta_i(t)\mu_t\Big)\Big\|_1^2 \Big)^{\frac12}&=\sum_{k=0}^{n'}\Big(\sum_{i\in \M_k}\left\|L(\nu_i)\right\|_1^2 \Big)^{\frac12}\\
            &= \sum_{k=0}^{n'}\Big(\sum_{i\in \M_k}\Big(\sum_{j=1}^d\left|\int f_j d\nu_i\right|\Big)^2\Big)^{\frac12} \\
		&\stackrel{\rm triangle}{\leq} \sum_{j=1}^d\sum_{k=0}^{n'}\Big(\sum_{i\in \M_k}\left|\int f_j d\nu_i\right|^2\Big)^{\frac12} \\
		&\stackrel{\eqref{E:Sobolev2}}{\leq} \sum_{j=1}^d C\sum_{uv\in E(G_n)} |f_j(u)-f_j(v)|th(uv) \\
		&= C\sum_{uv\in E(G_n)} \|L(\delta_u-\delta_v)\|_1 th(uv) \\
		&\leq C\sum_{uv\in E(G_n)} \|L\| d_{G_n}(uv)th(uv) \\
		&\overset{\eqref{eq:probabilitymeasure}}{=} C\|L\|. 
	\end{split}
    \end{equation}
	
	With this $\ell_1$-valued Sobolev inequality in hand, obtaining a lower bound for $c_1(\TC(G_n))$ is immediate with the help of \eqref{E:TCnu}: we let $D$ be any number larger than $\{c_1(\TC(G_n))\}_{n=1}^\infty$, and we suppose that $L: \TC(G_n) \to \ell_1^d$ is a noncontractive linear map with $\|L\| \leq D$. We can find such a map $L$ by the equality \eqref{E:NSred} from \S\ref{sec:intro}.  
	Then, using the above inequality, we get the estimate 
	\begin{align*}
		\sum_{k=0}^{n'}\alpha_k \stackrel{\eqref{E:TCnu}}{\leq} \sum_{k=0}^{n'}\Big(\sum_{i\in \M_k}\left\|\nu_i\right\|^2_{\TC}\Big)^{\frac12} \leq \sum_{k=0}^{n'}\Big(\sum_{i\in \M_k}\left\|L(\nu_i)\right\|^2_1\Big)^{\frac12} \leq C\|L\| \leq C D.
	\end{align*}
	
	Since $D > c_1(\TC(G_n))$ was arbitrary, we get  $c_1(\TC(G_n)) \geq C^{-1}\sum_{k=0}^{n'}\alpha_k$.
\end{proof}

\subsection{Reduction to Simply Connected Sets}
In this subsection, we provide a very useful reduction that allows us to verify Sobolev-type inequalities only for indicator functions of sets that are connected and have a connected complement.

Let $G = (V,E)$ be a connected, finite, directed graph and $A \sbs V$. The {\it edge-boundary} of $A$, denoted $\partial_E A$, is the set of all edges having one endpoint in $A$ and the other in $A^c$. Obviously, $\partial_E A = \partial_E A^c$. Let $\nu$ be a fully supported positive measure on $E$. For $f: V \to \R$, we define the gradient $\nabla f: E \to \R$ by $\nabla(f)(uv) := f(v)-f(u)$. We define the $(1,1)$-Sobolev (semi)norm of $f$ by
$$\|f\|_{W^{1,1}(\nu)} := \|\nabla f\|_{L_1(\nu)} = \sum_{uv\in E} |f(v)-f(u)|\nu(uv).$$
Note that when $f = \one_A$ for some $A \sbs V$, $\|\one_A\|_{W^{1,1}(\nu)} = \nu(\partial_E A)$.

Since $G$ is connected and $\nu$ is fully supported, $\|f\|_{W^{1,1}(\nu)} = 0$ if and only if $f$ is a constant function. Thus, the set of functions $V \to \R$ modulo constant functions becomes a normed space when equipped with the Sobolev norm. We denote this normed space by $W^{1,1}(\nu)$.

To prove the desired reduction, we use a characterization of the extreme points of $B_{W^{1,1}(\nu)}$ found in \cite{Ost05}, where $B_W$ denotes the closed unit ball of a normed space $W$. We refer to a set of vertices $A \sbs V$ as {\it simply connected} if $A$ and $A^c$ are connected. We also use the comment on \cite[bottom of p.~501]{Ost05} (stating that the proof for edge-weighted graphs goes along the same lines).

\begin{theorem}[{\cite[Theorem~7]{Ost05}}] \label{thm:extremepoints}
	Suppose $\nu$ is a positive measure on $E$. Then the equivalence class of a function $f: V \to \R$ is an extreme point of $B_{W^{1,1}(\nu)}$ if and only if $\|f\|_{W^{1,1}(\nu)} = 1$ and there exists a simply connected subset $A \sbs V$ such that the restriction of $f$ to $A$ is a constant function and the restriction of $f$ to $A^c$ is a constant function.
\end{theorem}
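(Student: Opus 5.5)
The plan is to linearize the problem via the gradient and then use the elementary description of extreme points of the unit ball of a subspace of a finite-dimensional $L_1$ space.

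\textbf{Step 1: reduce to a subspace of $L_1(\nu)$.} The map $f\mapsto \nabla f$ is an isometry from $W^{1,1}(\nu)$ onto the subspace $Y=\nabla(\R^V)\sbs L_1(E,\nu)$; it is injective modulo constants because $G$ is connected. So extreme points of $B_{W^{1,1}(\nu)}$ correspond exactly to extreme points of $B_Y$.

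\textbf{Step 2: characterize extreme points of $B_Y$.} Let $y\in Y$ with $\|y\|_1=1$, and write $S=\supp y$. For $z\in Y$ and small $|\ep|$ we have
\[\|y+\ep z\|_1=1+\ep\sum_{e\in S}\mathrm{sign}(y(e))z(e)\nu(e)+|\ep|\sum_{e\notin S}|z(e)|\nu(e).\]
Hence $y\pm\ep z\in B_Y$ for some $\ep>0$ if and only if two conditions hold: $z$ vanishes off $S$, and $\sum_S \mathrm{sign}(y)z\,\nu=0$. Now suppose some $z\in Y$ with $\supp z\sbs S$ is not a multiple of $y$. Then $z-cy$, with $c$ chosen so that the signed sum vanishes, is a nonzero admissible direction, so $y$ is not extreme. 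Conversely, a multiple $cy$ has signed sum $c$, so such a multiple is admissible only when $c=0$. Therefore $y$ is extreme if and only if $\{z\in Y:\supp z\sbs S\}=\R y$.

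\textbf{Step 3: translate back to graphs.} Let $f$ represent $y$, and let $H$ be the spanning subgraph of $G$ on $V$ whose edges are those of $E\setminus S$, i.e.\ the edges along which $f$ is constant. A function $g$ satisfies $\supp\nabla g\sbs S$ exactly when $g$ is constant on each connected component of $H$. Modulo constants, the space of such $g$ therefore has dimension equal to the number of components of $H$ minus one. This space is one-dimensional, i.e.\ equals $\R y$, if and only if $H$ has exactly two components $A$ and $A^c$. In that case $f$ is constant on each of $A$ and $A^c$, and these are connected, so $A$ is simply connected.

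For the converse, let $A$ be simply connected and let $f$ be constant on $A$ and on $A^c$ with $\|f\|_{W^{1,1}(\nu)}=1$. Then the two constants differ, so $S=\partial_E A$. Consequently $H$ is $G$ with the boundary edges removed, and its components are exactly $A$ and $A^c$, because both are connected. By Step 2, $f$ is extreme.

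\textbf{Main obstacle.} The delicate part is Step 2. One must handle the linear (signed) term correctly: this is why subtracting a multiple of $y$ is needed, and why the criterion is ``the space of $z$ supported in $S$ is one-dimensional'' rather than ``it is zero''. One must also check that the edge weights $\nu$ play no role beyond being positive. Everything else is bookkeeping about connected components.
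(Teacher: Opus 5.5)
Your proof is correct. The only omissions are two one-line justifications, listed at the end.

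The paper itself does not prove this statement. It is imported from \cite[Theorem~7]{Ost05}. For the edge-weighted case, the paper only cites a remark in \cite{Ost05} that the proof ``goes along the same lines.'' That case is what Section~\ref{S:EdgeRepl} actually needs, since there $\nu=th$ is far from uniform. Your argument supplies exactly this.

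Your Step~2 is the general fact that a norm-one vector $y$ of a subspace $Y\sbs L_1(\nu)$ is extreme in $B_Y$ iff, up to scaling, it is the only element of $Y$ supported in $\supp y$, i.e.\ it has minimal support. Your Step~3 identifies the minimal supports among gradients as the cuts $\partial_E A$ with $A$ simply connected (the bonds of $G$). It does so via the count $\dim\{z\in Y:\supp z\sbs S\}=(\text{number of components of }(V,E\setminus S))-1$. This handles both implications at once, and it makes transparent that $\nu$ matters only through its positivity.

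A more hands-on alternative is to decompose a non-extremal $f$ explicitly. One truncates at an intermediate value, $f=\min(f,a)+(\max(f,a)-a)$, and splits an indicator $\one_A$ along the components of $A$ or $A^c$; in both cases the $W^{1,1}$ norms add. This directly produces the convex combination of normalized indicators of simply connected sets used in Theorem~\ref{thm:iso-to-Sobolev}, which needs only the ``only if'' half. However, it requires a separate argument for the ``if'' half, which your dimension count gives for free.

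The two small points:
\begin{enumerate}
\item The ``only if'' direction asserts $\|f\|_{W^{1,1}(\nu)}=1$, which you assume rather than derive. It holds because $y$ with $0<\|y\|<1$ is the midpoint of $(1\pm\delta)y$, and $0$ is the midpoint of $\pm z$ for any nonzero $z\in B_Y$.
\item In Step~2, if $y\pm\ep z\in B_Y$ for some $\ep>0$, then by convexity this also holds for all smaller $\ep$. This is what makes your small-$\ep$ expansion of the norm applicable to the ``for some $\ep$'' condition.
\end{enumerate}
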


\begin{theorem}[Reduction to Simply Connected Sets] \label{thm:iso-to-Sobolev}
	Suppose that $\nu$ is a positive measure on $E$. Let $|||\cdot|||$ be a seminorm on $\R^V$ with $|||\one_V||| = 0$, and let $C \in [0,\infty)$ be a constant. If $|||\one_A||| \leq C \|\one_A\|_{W^{1,1}(\nu)}$ for all simply connected sets $A \sbs V$, then $|||f||| \leq C\|f\|_{W^{1,1}(\nu)}$ for all $f: V \to \R$.
\end{theorem}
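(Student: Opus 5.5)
The plan is to use convexity together with the extreme point description in Theorem~\ref{thm:extremepoints}. Since $|||\cdot|||$ is a seminorm vanishing on $\one_V$, it is invariant under adding constants: by the triangle inequality, $|||f+c\one_V||| \le |||f||| + |c|\,|||\one_V||| = |||f|||$, and symmetrically. Hence $|||\cdot|||$ descends to a well-defined seminorm on the quotient space $W^{1,1}(\nu)$, which is $\R^V$ modulo constants. This quotient is finite dimensional because $V$ is finite, and on it $\|\cdot\|_{W^{1,1}(\nu)}$ is a genuine norm: $G$ is connected and, as noted above, $\|f\|_{W^{1,1}(\nu)}=0$ only for constants (this uses that $\nu$ is fully supported, which the positivity assumption provides). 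So $B_{W^{1,1}(\nu)}$ is a compact convex set.

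Next, the function $F(f) := |||f||| - C\|f\|_{W^{1,1}(\nu)}$ is homogeneous of degree one. It therefore suffices to prove $|||f|||\le C$ for every $f$ in $B_{W^{1,1}(\nu)}$. By Krein--Milman, or more concretely Minkowski's/Carathéodory's theorem in finite dimensions, every point of this ball is a convex combination of its extreme points. The quantity $|||\cdot|||$ is convex, so its supremum over the ball is attained at, or bounded by, its values on extreme points. It thus suffices to show $|||g|||\le C$ for each extreme point $g$.

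By Theorem~\ref{thm:extremepoints}, an extreme point $g$ satisfies $\|g\|_{W^{1,1}(\nu)}=1$ and is constant on some simply connected $A$ and on $A^c$. So $g = a\one_A + b\one_{A^c}$, which modulo constants is $(a-b)\one_A$. Also $1=\|g\|_{W^{1,1}(\nu)} = |a-b|\,\nu(\partial_E A)$. The hypothesis then gives
\[
|||g||| = |a-b|\,|||\one_A||| \le C|a-b|\,\|\one_A\|_{W^{1,1}(\nu)} = C\|g\|_{W^{1,1}(\nu)} = C.
\]
Degenerate cases such as $A=\emptyset$ or $A=V$ cannot occur, since they would give $g$ a vanishing norm.

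The only delicate point is making sure Theorem~\ref{thm:extremepoints} applies in the weighted setting; the paper notes this via the comment in \cite{Ost05}. One also needs that $\nu$ is fully supported so the quotient norm is nondegenerate and the ball is compact. If $\nu$ were allowed zeros, one would handle this by a limiting argument, replacing $\nu$ with $\nu+\eps$ and letting $\eps\to0$, but I expect full support to be the intended assumption. Everything else is routine convexity.
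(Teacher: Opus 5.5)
Your proposal is correct and follows essentially the same route as the paper: normalize to $\|f\|_{W^{1,1}(\nu)}=1$, write $f$ (modulo constants) as a convex combination of extreme points via Carath\'eodory, identify each extreme point as $a_i\one_{A_i}$ with $A_i$ simply connected by Theorem~\ref{thm:extremepoints}, and apply the hypothesis termwise using the triangle inequality for $|||\cdot|||$. Your extra remarks, that $|||\cdot|||$ descends to the quotient by constants and that $\nu$ must be fully supported, only make explicit what the paper uses implicitly.
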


\begin{proof} Assume that $|||\one_A||| \leq C \|\one_A\|_{W^{1,1}(\nu)}$ for all simply connected sets $A \sbs V$. Let $f: V \to \R$ be a function. If $f$ is a constant function, then $|||f|||=0$ by assumption, and so the conclusion holds trivially. Assume, then, that $f$ is not constant. By homogeneity of the seminorms on each side of the inequality, we may then assume that $\|f\|_{W^{1,1}(\nu)} = 1$. By Carathéodory's convex hull theorem, there exist finitely many extreme points $\{g_i\}_{i=1}^m \sbs B_{W^{1,1}(\nu)}$ and positive scalars $\{c_i\}_{i=1}^m \sbs[0,1]$ such that $\sum_{i=1}^m c_i = 1$ and $f \equiv \sum_{i=1}^m c_ig_i$ (modulo constant functions). By Theorem~\ref{thm:extremepoints}, there exist simply connected subsets $\{A_i\}_{i=1}^m$ of $V$ and scalars $\{a_i\}_{i=1}^m \sbs \R$ such that $g_i \equiv a_i\one_{A_i}$ (modulo constant functions) for each $i\in\{1,\dots m\}$. Then
	$f \equiv \sum_{i=1}^m c_ia_i\one_{A_i}$ (modulo constant functions). Then, since both the seminorm $|||\cdot|||$ and the Sobolev norm vanish on constant functions, we have
	\begin{align*}
		|||f||| &= |||\sum_{i=1}^m c_ia_i\one_{A_i}||| \leq \sum_{i=1}^m c_i|a_i|\,|||\one_{A_i}||| \\
		&\leq C\sum_{i=1}^m c_i|a_i|\,\|\one_{A_i}\|_{W^{1,1}(\nu)} = C\sum_{i=1}^m c_i\|g_i\|_{W^{1,1}(\nu)} = C = C\|f\|_{W^{1,1}(\nu)}.\qedhere
	\end{align*}
\end{proof}

\section{Planar Grids} \label{sec:grids}
Let $n\in\N$ with $n \geq 2$. The {\it dyadic planar grid} is the graph $Gr_n$ with vertex set $V(Gr_n) := \{0,\dots 2^n\}^2$ and directed edge set $E(Gr_n) := \{uv \in V(Gr_n)^2: v \in \{u+(1,0),u+(0,1)\}\}$. Note that $|E(Gr_n)| = 2^{n+1}(2^n+1) \leq \frac{5}{2}4^{n}$. We equip $V(Gr_n)$ with the unit edge length shortest path metric, which coincides with the $\ell_1$-metric on $\R^2$. The diameter function $\diam$ defined on nonempty subsets of a metric space and the distance function $\dist$ defined on pairs of nonempty subsets of a metric space are defined in the usual way.

\subsection{Constructing the Measures}
For the grids, we take $n' = n-2$ and we take the index set $\M$ to be $T = \cup_{k=0}^{n-2} T_k$, where $T_k$ is the Cartesian power $\{1,2,3,4\}^k$, with the convention that $\{1,2,3,4\}^0 = \{\emptyset\}$. We think of $T$ as the regular rooted tree of depth $n' = n-2$ and branching number $4$. When $t \in T$, we define the {\it generation} of $t$ to be the unique integer $|t|$ for which $t \in T_{|t|}$. We let $\{\mu_t\}_{t\in T}$ be any collection of signed measures with 0 total mass in $\TC(Gr_n)$ satisfying the following five properties {\bf (P\ref{P1})}-{\bf (P\ref{P5})} for some $\frac{1}{2} \leq C_1,C_2,C_3,C_4 < \infty$ and every $t \in T$. Recall that the support of a function $f: V \to \R$ on a finite set $V$ is the subset $\supp(f) := \{v \in V: f(v) \neq 0\}$, the support of a signed measure $\mu$ on $V$ is the support of the function $v \mapsto \mu(\{v\})$, and $a \vee b$ denotes the maximum of $a,b\in\R$.

\begin{enumerate}[{\bf(P1)}]
	\item \label{P1} $\diam(\supp(\mu_t)) \leq C_1 2^{n-|t|}$.
	\item \label{P2} If $t' \in T \setminus \{t\}$, then $\dist(\supp(\mu_t),\supp(\mu_{t'})) \geq C_2^{-1}2^{n - |t|\vee|t'|}$.
	\item \label{P3} $\|\mu_t\|_{\TC} \geq C_3^{-1}4^{-|t|}$.
	\item \label{P4} For any $A \sbs V(Gr_n)$, it holds that $|\mu_t(A)| \leq C_4\min\{2^{-n-1-|t|}, 4^{-n}(\diam(A)+1)\}$.
	\item \label{P5} There exists $u \in V(Gr_n)$ such that $\supp(\mu_t) \cup \{u\}$ is connected.
\end{enumerate}

There are many ways to construct measures in $\TC(Gr_n)$ satisfying these five properties. We sketch one particular way in the next theorem, where the supports of the measures are ``cross-shaped".

\begin{theorem} \label{thm:Cvalues}
	There exist measures $\{\mu_t\}_{t\in T} \sbs \TC(Gr_n)$ satisfying {\bf (P\ref{P1})}-{\bf (P\ref{P5})} with $C_1 = \frac{1}{2}$, $C_2 = 4$, $C_3 = 16$, and $C_4 = 1$.
\end{theorem}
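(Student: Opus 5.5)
The plan is to build the measures from a nested family of dyadic squares, placing on each square a signed ``cross'' at its center, and then to verify {\bf (P\ref{P1})}--{\bf (P\ref{P5})} one at a time. Each verification is an elementary counting or distance computation.

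\textbf{Construction.} Index dyadic squares by $T$: let $Q_\emptyset=[0,2^n]^2$, and if $Q_t$ has side $2^{n-|t|}$, let $Q_{t1},\dots,Q_{t4}$ be its four closed quadrants, of side $2^{n-|t|-1}$. Let $c_t$ be the center of $Q_t$; it is a lattice point because $|t|\le n-2$. Put $m_t:=2^{n-|t|-2}\ge 1$ and define
\[
H_t:=\{c_t\pm(j,0):1\le j\le m_t\},\qquad V_t:=\{c_t\pm(0,j):1\le j\le m_t\},
\]
\[
\mu_t:=4^{-n}\big(\one_{H_t}-\one_{V_t}\big).
\]
Since $|H_t|=|V_t|=2m_t$, the measure $\mu_t$ has total mass $0$. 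Its support is the cross $H_t\cup V_t$ with the center removed.

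\textbf{Verification of the properties.}
\begin{enumerate}[(i)]
\item {\bf (P\ref{P1})} and {\bf (P\ref{P5})}. The farthest points of the cross are two arm endpoints, at $\ell_1$-distance $2m_t=2^{n-|t|-1}$, so $C_1=\frac12$ works. Adding $u=c_t$ makes the support connected, which gives {\bf (P\ref{P5})}.
\item {\bf (P\ref{P4})}. Every $A$ meets at most one point of the horizontal arm per value of the $x$-coordinate, so $|A\cap H_t|\le\diam(A)+1$, and likewise $|A\cap V_t|\le\diam(A)+1$. Since the two arms carry opposite signs,
\[
|\mu_t(A)|\le 4^{-n}\max\{|A\cap H_t|,|A\cap V_t|\}\le 4^{-n}(\diam A+1).
\]
The same bound is also at most $4^{-n}\cdot 2m_t=2^{-n-1-|t|}$. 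Hence $C_4=1$.
\item {\bf (P\ref{P3})}. Test against the $1$-Lipschitz function (for the $\ell_1$ metric) $f(x)=|x_1-c_{t,1}|-|x_2-c_{t,2}|$ and use Kantorovich duality. This gives
\[
\int f\,d\mu_t=4^{-n}\cdot 2\cdot 2\sum_{j=1}^{m_t}j\ \ge\ 2\cdot 4^{-n}m_t^2=\tfrac18 4^{-|t|},
\]
so even $C_3=8\le 16$ works.
\item {\bf (P\ref{P2})}. This is the main step, handled by case analysis on how $t$ and $t'$ sit in the tree. Assume $|t'|\ge|t|$.
\begin{itemize}
\item If $Q_{t'}$ is not contained in $Q_t$, the two crosses lie in different quadrants of their lowest common ancestor square. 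Each cross stays at distance $\ge m_s/2$ from the boundary of its own square $Q_s$, which sits inside one of those quadrants. So the distance is at least $2^{n-|t'|-2}$, with a factor to spare in the sibling case.
\item If $Q_{t'}\subseteq Q_t$, the cross of $\mu_t$ lies on the two central lines of $Q_t$, so it lies on the boundaries of the quadrants $Q_{ti}$. The cross of $t'$ lies inside some $Q_{ti}$ and keeps distance $\ge 2^{n-|t'|-2}$ from $\partial Q_{t'}$. Therefore
\[
\dist\ge 2^{n-|t'|-2}=\tfrac14\,2^{n-|t|\vee|t'|},
\]
which gives $C_2=4$.
\end{itemize}
\end{enumerate}

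The expected obstacle is the bookkeeping in {\bf (P\ref{P2})}. There the arm length $m_t$ must be chosen so that the cross spans half the side of $Q_t$, which {\bf (P\ref{P1})} and {\bf (P\ref{P3})} require. At the same time, the cross must leave a margin of a quarter of the side of $Q_t$ to the boundary, so that both ancestors and non-descendants stay far away. The choice $m_t=2^{n-|t|-2}$ balances these requirements exactly. The finest generation $|t|=n-2$ still works, with margin $1$.
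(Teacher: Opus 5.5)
Your proposal is correct and is essentially the paper's proof: you use the same centered crosses with arms of length $2^{n-|t|-2}$ and point masses $4^{-n}$ (up to which arm is positive), and the same verifications of (P1), (P2), (P4), (P5); your two cases for (P2) amount to the paper's single observation that the shallower support avoids the interior of the deeper square. The only difference is in (P3): your dual test function $|x_1-c_{t,1}|-|x_2-c_{t,2}|$ accounts for the travel of both arms and yields $C_3=8$, whereas the paper's primal estimate charges each positive point only its distance to the negative line and so settles for $C_3=16$.
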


\begin{proof} Let $k \in \{0,\dots n-2\}$. We can decompose $Gr_n$ into $4^{k}$ dyadic subgrids, with the collection of subgrids indexed by $T_k$. For $n=4$ and $k=1,2$, this is shown in Figure~\ref{fig:Gr4}. Let $t \in T_k$. Consider the dyadic subgrid $Q_t$ of size $2^{n-k} \times 2^{n-k}$ in $Gr_n$ corresponding to $t$. We draw a vertical black line, with length half that of the side length of $Q_t$, through the center of $Q_t$, so that it is a path subgraph with $2^{n-k-1}+1$ vertices and $2^{n-k-1}$ edges. Then we draw a horizontal red line, with the same length as the black vertical line, through the center of $Q_t$. These two lines intersect exactly at the center of $Q_t$. We define positive measures $\mu_t^+,\mu_t^-$ on $V(Gr_n)$ by $\mu_t^+(\{u\}) = 4^{-n}$ if $u$ belongs to the vertical black line and $\mu_t^+(\{u\}) = 0$ otherwise, and $\mu_t^-(\{u\}) = 4^{-n}$ if $u$ belongs to the horizontal red line and $\mu_t^-(\{u\}) = 0$ otherwise. We define $\mu_t := \mu_t^+ - \mu_t^-$. Thus, $\mu_t$ is a signed measure with 0 total mass and a cross-shaped support. This support is almost the union of the black and red lines, except that the center point where they intersect fails to be in the support. Therefore, the measure $\mu_t$ satisfies {\bf (P\ref{P5})}. See Figure~\ref{fig:Gr4} for a picture of the supports of $\mu_t$ for $|t|\leq 2$ when $n=4$.
	
	We now consider the other properties {\bf (P\ref{P1})}-{\bf (P\ref{P4})}. Let $t \in T$ and set $k := |t|$. It is immediate that {\bf (P\ref{P1})} is satisfied with $C_1 =\frac{1}{2}$.
    
	To see that {\bf (P\ref{P2})} holds with $C_2 = 4$, let $t' \in T \setminus\{t\}$, and assume without loss of generality that $|t'| \leq |t|$. Then $\mu_t$ is supported in the center of a dyadic subgrid $Q_t$ of size $2^{n-k} \times 2^{n-k}$, and the distance from $\supp(\mu_t)$ to the boundary of $Q_t$ is $2^{n-k-2}$. The support of $\mu_{t'}$ is disjoint from the interior of $Q_t$. Hence, the distance between $\supp(\mu_t)$ and $\supp(\mu_{t'})$ is at least $2^{n-k-2}$.
	
	For {\bf (P\ref{P3})}, to obtain a lower bound on $\|\mu_t\|_{\TC}$, we observe that, in any transportation plan from $\mu_t^+$ between $\mu_t^-$, each point $u$ in the support of $\mu_t^+$ must be transported a distance at least the distance from $u$ to the horizontal red line supporting $\mu_t^-$. This leads to a lower bound for $\|\mu_t\|_{\TC}$ of $(4^{-n})2\sum_{i=1}^{2^{n-k-2}} i = 4^{-n}(4^{n-k-2}+2^{n-k-2}) > 4^{-k-2}$. This shows that $C_3 \leq 16$.
	
	Finally, for {\bf (P\ref{P4})}, let $A \sbs V(Gr_n)$. Without loss of generality, assume that $\mu_t^+(A) \geq \mu_t^-(A)$, so that we have $|\mu_t(A)| \leq \mu_t^+(A)$. We have the trivial bound $\mu_t^+(A) \leq \mu_t^+(V(Gr_n)) = 4^{-n}2^{n-k-1} = 2^{-n-k-1}$. Let $y^-$ denote the minimum of $y$-coordinates among all points in $\supp(\mu_t^+) \cap A$ and $y^+$ the maximum $y$-coordinate. Then it is easy to see that $\mu_t^+(A) \leq 4^{-n}(y^+-y^-+1) \leq 4^{-n}(\diam(A)+1)$. Hence $C_4 \leq 1$. 
\end{proof}

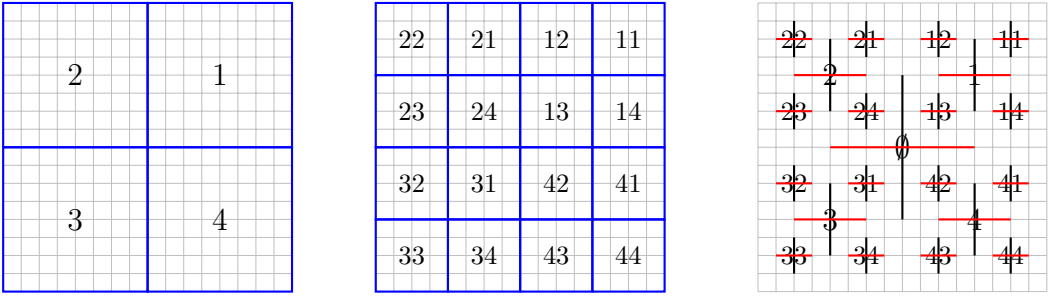
\begin{figure}
	\centering
	\begin{minipage}{0.3\textwidth}
		\centering
		\begin{tikzpicture}[scale=0.24]
			\draw[step=1, gray!50, very thin] (0,0) grid (16,16);
			\foreach \x/\y/\label in {8/8/1, 0/8/2, 0/0/3, 8/0/4} {
				\draw[blue, thick] (\x,\y) rectangle ({\x+8},{\y+8});
				\node[font=\large] at ({\x+4},{\y+4}) {\label};
			}
		\end{tikzpicture}
	\end{minipage}%
	\begin{minipage}{0.3\textwidth}
		\centering
		\begin{tikzpicture}[scale=0.24]
			\draw[step=1, gray!50, very thin] (0,0) grid (16,16);
			\foreach \qx/\qy/\q in {8/8/1, 0/8/2, 0/0/3, 8/0/4} {
				\foreach \sx/\sy/\s in {4/4/1, 0/4/2, 0/0/3, 4/0/4} {
					\pgfmathsetmacro{\x}{\qx+\sx}
					\pgfmathsetmacro{\y}{\qy+\sy}
					\pgfmathsetmacro{\blocklabel}{int(\q*10 + \s)}
					\draw[blue, thick] (\x,\y) rectangle ({\x+4},{\y+4});
					\node[font=\small] at ({\x+2},{\y+2}) {\blocklabel};
				}
			}
		\end{tikzpicture}
	\end{minipage}
	\begin{minipage}{0.3\textwidth}
		\centering
		\begin{tikzpicture}[scale=0.24]
			\draw[step=1, gray!50, very thin] (0,0) grid (16,16);
			
			\node[font=\large] at (8,8) {$\emptyset$};
			\draw[black, thick] (8,4) -- (8,12); 
			\draw[red, thick] (4,8) -- (12,8);   
			
			
			\node[font=\large] at (12,12) {1};
			\draw[black, thick] (12,10) -- (12,14); 
			\draw[red, thick] (10,12) -- (14,12);   
			
			\node[font=\large] at (4,12) {2};
			\draw[black, thick] (4,10) -- (4,14); 
			\draw[red, thick] (2,12) -- (6,12);   
			
			\node[font=\large] at (4,4) {3};
			\draw[black, thick] (4,2) -- (4,6); 
			\draw[red, thick] (2,4) -- (6,4);   
			
			\node[font=\large] at (12,4) {4};
			\draw[black, thick] (12,2) -- (12,6); 
			\draw[red, thick] (10,4) -- (14,4);    

			
			
			\node[font=\small] at (14,14) {11};
			\draw[black, thick] (14,13) -- (14,15); 
			\draw[red, thick] (13,14) -- (15,14);   
			
			\node[font=\small] at (10,14) {12};
			\draw[black, thick] (10,13) -- (10,15); 
			\draw[red, thick] (9,14) -- (11,14);   
			
			\node[font=\small] at (10,10) {13};
			\draw[black, thick] (10,9) -- (10,11); 
			\draw[red, thick] (9,10) -- (11,10);   
			
			\node[font=\small] at (14,10) {14};
			\draw[black, thick] (14,9) -- (14,11); 
			\draw[red, thick] (13,10) -- (15,10);   

			
			\node[font=\small] at (6,14) {21};
			\draw[black, thick] (6,13) -- (6,15); 
			\draw[red, thick] (5,14) -- (7,14);   
			
			\node[font=\small] at (2,14) {22};
			\draw[black, thick] (2,13) -- (2,15); 
			\draw[red, thick] (1,14) -- (3,14);   
			
			\node[font=\small] at (2,10) {23};
			\draw[black, thick] (2,9) -- (2,11); 
			\draw[red, thick] (1,10) -- (3,10);   
			
			\node[font=\small] at (6,10) {24};
			\draw[black, thick] (6,9) -- (6,11); 
			\draw[red, thick] (5,10) -- (7,10);   

			
			\node[font=\small] at (14,6) {41};
			\draw[black, thick] (14,5) -- (14,7); 
			\draw[red, thick] (13,6) -- (15,6);   
			
			\node[font=\small] at (10,6) {42};
			\draw[black, thick] (10,5) -- (10,7); 
			\draw[red, thick] (9,6) -- (11,6);   
			
			\node[font=\small] at (10,2) {43};
			\draw[black, thick] (10,1) -- (10,3); 
			\draw[red, thick] (9,2) -- (11,2);   
			
			\node[font=\small] at (14,2) {44};
			\draw[black, thick] (14,1) -- (14,3); 
			\draw[red, thick] (13,2) -- (15,2);   

			
			\node[font=\small] at (6,6) {31};
			\draw[black, thick] (6,5) -- (6,7); 
			\draw[red, thick] (5,6) -- (7,6);   
			
			\node[font=\small] at (2,6) {32};
			\draw[black, thick] (2,5) -- (2,7); 
			\draw[red, thick] (1,6) -- (3,6);   
			
			\node[font=\small] at (2,2) {33};
			\draw[black, thick] (2,1) -- (2,3); 
			\draw[red, thick] (1,2) -- (3,2);   
			
			\node[font=\small] at (6,2) {34};
			\draw[black, thick] (6,1) -- (6,3); 
			\draw[red, thick] (5,2) -- (7,2);   
		\end{tikzpicture}
	\end{minipage}%
	
	\caption{Left: The decomposition of $Gr_4$ into dyadic subgrids $\{Q_t\}_{t\in T_1}$ indexed by $T_1 = \{1,2,3,4\}$. Center: The decomposition of $Gr_4$ into dyadic subgrids $\{Q_t\}_{t\in T_2}$ indexed by $T_2 = \{1,2,3,4\}^2$. Right: The positive (black) and negative (red) supports of the measures $\mu_t$ for $|t| \leq 2$.}
	\label{fig:Gr4}
\end{figure}

\subsection{Proof of Condition {\bf (C\ref{C2})}}
We begin with a simple lemma stating that the transportation cost norm is approximately additive when the supports of the measures involved are sufficiently separated. This is intuitively clear, and one will frequently encounter closely related estimates of this type in the transportation cost literature. Once we've established the lemma, condition {\bf (C\ref{C2})} will quickly follow.

\begin{lemma} \label{lem:TCadditive}
	Let $(X,d)$ be a finite metric space and $M$ a finite collection of nonzero signed measures on $X$, each having 0 total mass, such that
	\begin{itemize}
		\item $\sup_{\mu\in M}\diam(\supp(\mu)) \leq A$ and
		\item $\inf_{\mu',\mu\in M,\mu'\neq\mu} \dist(\supp(\mu),\supp(\mu')) \geq B^{-1}$
	\end{itemize}
	for some $A,B<\infty$. Then for any function $g: M \to \R$,
	\begin{equation*}
		\Big\|\sum_{\mu\in M}g(\mu)\mu\Big\|_{\TC} \geq (AB)^{-1}\sum_{\mu\in M}|g(\mu)|\|\mu\|_{\TC}.
	\end{equation*}
\end{lemma}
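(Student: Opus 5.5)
Kantorovich duality reduces the claim to building one good test function. Set $\sigma := \sum_{\mu\in M} g(\mu)\mu$; it suffices to construct a $1$-Lipschitz function $F: X\to\R$ with $\int F\,d\sigma \geq (AB)^{-1}\sum_{\mu\in M}|g(\mu)|\|\mu\|_{\TC}$. For each $\mu\in M$, let $f_\mu$ be a $1$-Lipschitz function attaining $\int f_\mu\,d\mu = \|\mu\|_{\TC}$; a maximizer exists because $X$ is finite. Replace $f_\mu$ by $\operatorname{sign}(g(\mu))f_\mu$ so that each term contributes $|g(\mu)|\|\mu\|_{\TC}$. Since $\mu$ has zero total mass, subtracting a constant from $f_\mu$ does not change $\int f_\mu\,d\mu$. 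Normalize so that $f_\mu$ vanishes at some point of $S_\mu:=\supp(\mu)$. Then $|f_\mu|\leq \diam(S_\mu)\leq A$ on $S_\mu$.

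The plan is to glue these local functions into a single function that agrees with a rescaled $f_\mu$ on each support. Define $h$ on $S:=\bigcup_\mu S_\mu$ by $h|_{S_\mu} := \lambda f_\mu|_{S_\mu}$, where $\lambda>0$ is a constant to be chosen; this is well defined because the supports are disjoint, their distance being $\geq B^{-1}>0$. Then extend $h$ from $S$ to all of $X$ by McShane's extension $F(x)=\min_{y\in S}(h(y)+L\,d(x,y))$, which preserves the Lipschitz constant $L$ of $h$ on $S$. With $\lambda$ chosen so that $L\le 1$, we get $\int F\,d\sigma = \lambda\sum_\mu |g(\mu)|\|\mu\|_{\TC}$, since each $\mu$ only sees $S_\mu$.

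The main step is bounding the Lipschitz constant of $h$ on $S$. For $x,y$ in the same $S_\mu$, we have $|h(x)-h(y)|\le \lambda d(x,y)$. For $x\in S_\mu$ and $y\in S_{\mu'}$ with $\mu\neq\mu'$, we have $|h(x)-h(y)|\le |h(x)|+|h(y)|\le 2\lambda A$ while $d(x,y)\ge B^{-1}$, so the ratio is at most $2\lambda AB$. Choosing $\lambda=\min\{1,(2AB)^{-1}\}$ gives $L\le1$ and the bound $\lambda\sum|g(\mu)|\|\mu\|_{\TC}$. This is off by the factor $2$ from the stated $(AB)^{-1}$.

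To remove the factor $2$, I would normalize more symmetrically. Shift $f_\mu$ so that its range on $S_\mu$ is centered at $0$. Its range on $S_\mu$ has length at most $\diam(S_\mu)\le A$, so after centering $|f_\mu|\le A/2$ on $S_\mu$. Then the cross-support ratio becomes at most $\lambda A B$, and $\lambda=\min\{1,(AB)^{-1}\}$ works. Finally, $AB\ge1$ whenever $|M|\ge2$: any two points in a single support are at distance $\le A$, and a point of $S_\mu$ is at distance $\geq B^{-1}$ from every other support. If $AB<1$, or if $|M|\le 1$, the case is handled directly, since then $\lambda=1$ and the bound $\sum|g(\mu)|\|\mu\|_{\TC}$ is at least as strong as the claim. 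I expect no real obstacle beyond this constant bookkeeping.
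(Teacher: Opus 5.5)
Your gluing argument is correct, but it takes the dual route, whereas the paper works on the primal side. The paper takes an optimal transportation plan for $\sum_{\mu}g(\mu)\mu$. Inside each support it rematches the mass leaving that support with the mass entering it. Every rerouted unit previously travelled at least $B^{-1}$ and now travels at most $A$, so the modified plan splits into plans for the individual $g(\mu)\mu$.

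You instead glue optimal Kantorovich potentials. Centering each $f_\mu$ on $S_\mu$ so that $|f_\mu|\le A/2$ there is the dual counterpart of ``at most $A$ versus at least $B^{-1}$''. The McShane extension replaces the bookkeeping with the sets $P_\tau,N_\tau$. Your version is somewhat cleaner and makes the constant fully transparent. Both arguments yield exactly
\[
\Big\|\sum_{\mu\in M}g(\mu)\mu\Big\|_{\TC}\ge \min\{1,(AB)^{-1}\}\sum_{\mu\in M}|g(\mu)|\,\|\mu\|_{\TC}.
\]

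Your final paragraph, however, is wrong on two counts.

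First, $AB\ge 1$ does not follow from $|M|\ge 2$. The hypotheses bound distances inside a support from above and distances between supports from below, which imposes no relation between $A$ and $B^{-1}$. For example, take $X=\{a,a',b,b'\}$ with $d(a,a')=d(b,b')=\eps$ and all other distances equal to $1$, and $M=\{\delta_a-\delta_{a'},\delta_b-\delta_{b'}\}$. Then $A=\eps$ and $B=1$.

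Second, when $AB<1$ your bound $\sum|g(\mu)|\|\mu\|_{\TC}$ is weaker than the claimed $(AB)^{-1}\sum|g(\mu)|\|\mu\|_{\TC}$, not stronger. No argument can rescue this case. The triangle inequality gives $\|\sum g(\mu)\mu\|_{\TC}\le\sum|g(\mu)|\|\mu\|_{\TC}$, so the lemma as stated fails whenever $AB<1$ and $g\not\equiv 0$. In the example above with $g\equiv 1$ it would require $2\eps\ge 2$. The same failure occurs for $|M|=1$, where the separation hypothesis is vacuous and $B$ can be taken arbitrarily small.

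The paper's proof tacitly uses $AB\ge1$ as well, since the unchanged, non-split part of the plan is also charged with the factor $AB$. This is harmless where the lemma is applied: there $AB=C_1C_2$, which equals $2$ for the measures of Theorem \ref{thm:Cvalues}. So replace your last paragraph's case analysis by stating the hypothesis $AB\ge 1$ explicitly, or prove the $\min\{1,(AB)^{-1}\}$ form. With that change your proof is complete.
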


\begin{proof} Denoting $g(\mu)\mu$ by $\tau$, we can write the desired inequality as 
	\begin{equation}\label{E:TCcomb}
		(AB)\left\|\sum_{\tau\in M'}\tau\right\|_{\TC}\ge \sum_{\tau\in M'}\|\tau\|_{\TC}   
	\end{equation}
	Let
	\[\sum_{\tau\in M'}\tau =a_1(\delta_{x_1}-\delta_{y_1})+a_2(\delta_{x_2}-\delta_{y_2})+\dots+ a_n(\delta_{x_n}-\delta_{y_n})\]
	be an optimal transportation plan (recall that our space is finite). It is easy to see that we may assume that $\{x_i,y_i\}_{i=1}^n$ are contained in $\cup_{\tau\in M'}\supp(\tau)$. If each pair $(x_i,y_i)$ is inside the support of some $\tau\in M'$, the inequality \eqref{E:TCcomb} is immediate with $AB$ replaced by $1$.
	
	If some pairs $(x_i,y_i)$ are split and their elements belong to the supports of different $\tau$'s, we change the transportation plan in the following way. For each $\tau$, let $P_\tau=\{i:~x_i\in\supp(\tau), y_i\notin\supp(\tau)\}$ and $N_\tau=\{i:~y_i\in\supp(\tau), x_i\notin\supp(\tau)\}$. Since $\tau(X)=0$, we get that $\sum_{i\in P_\tau}a_i=\sum_{i\in N_\tau}a_i$. Thus, we can match the corresponding availabilities and needs, and obtain a modified plan that, for each $\tau$, matches its availabilities with its needs. The cost of the resulting plan is at most a factor of $AB$ times the cost of the original plan, because each distance $d(x_i,y_i)$ from the supports of different $\tau$ is at least $B^{-1}$, and from the support of the same $\tau$ is at most $A$. On the other hand, the cost of the resulting plan cannot be less than the sum of optimal plans for each of $\tau$. The inequality \eqref{E:TCcomb} follows.
\end{proof}



\begin{theorem} \label{thm:(C2)grids}
	For any $k \in \{0,\dots n-2\}$ and measures $\{\mu_t\}_{t\in T_k} \sbs \TC(Gr_n)$ satisfying {\bf (P\ref{P1})}-{\bf (P\ref{P3})}, there exists an orthogonal system of functions $\{\theta_i: T_k \to \{-1,1\}\}_{i\in T_k}$ such that inequality \eqref{E:TCtheta} holds with $\alpha_k = (C_1C_2C_3)^{-1}$.
\end{theorem}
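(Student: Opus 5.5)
We plan to choose a Walsh/Hadamard-type orthogonal system of $\pm1$-valued functions on $T_k$ and then apply Lemma~\ref{lem:TCadditive} to each signed combination. Since $|T_k| = 4^k$, we identify $T_k = \{1,2,3,4\}^k$ with the group $(\Z/2\Z)^{2k}$ via any bijection $\{1,2,3,4\}\to(\Z/2\Z)^2$. For $i\in T_k$ we define $\theta_i(t) := (-1)^{\langle \phi(i),\phi(t)\rangle}$, where $\phi$ is this bijection and $\langle\cdot,\cdot\rangle$ is the standard $\mathbb{F}_2$ bilinear form. These are the characters of $(\Z/2\Z)^{2k}$. They are pairwise orthogonal, take values in $\{-1,1\}$, and number exactly $|T_k|$. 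Any other $\pm1$ Hadamard matrix of order $4^k$, for instance the $k$-fold tensor power of a $4\times 4$ Hadamard matrix, would work equally well. The argument never uses any structure of $\theta_i$ beyond $|\theta_i(t)|=1$.

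Next, fix $i\in T_k$ and apply Lemma~\ref{lem:TCadditive} to the collection $M = \{\mu_t\}_{t\in T_k}$ with $g(\mu_t) = \theta_i(t)$. For the two hypotheses of the lemma, we use the properties of the measures. By {\bf (P\ref{P1})}, every $\mu_t$ with $|t| = k$ has support of diameter at most $A := C_1 2^{n-k}$. By {\bf (P\ref{P2})}, distinct $t,t'\in T_k$ satisfy $|t|\vee|t'| = k$, so their supports are at distance at least $C_2^{-1}2^{n-k}$; thus $B := C_2 2^{k-n}$. The lemma also requires each $\mu_t$ to be nonzero, and this follows from {\bf (P\ref{P3})}. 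Hence $AB = C_1C_2$, and the lemma gives
\[
\Big\|\sum_{t\in T_k}\theta_i(t)\mu_t\Big\|_{\TC} \ge (C_1C_2)^{-1}\sum_{t\in T_k}\|\mu_t\|_{\TC}.
\]

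Finally, by {\bf (P\ref{P3})} each term satisfies $\|\mu_t\|_{\TC}\ge C_3^{-1}4^{-k}$, and there are $4^k$ terms. The sum is therefore at least $C_3^{-1}$, which yields \eqref{E:TCtheta} with $\alpha_k = (C_1C_2C_3)^{-1}$, uniformly in $i$.

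There is no real obstacle here. The only point requiring care is that the scale factors $2^{n-k}$ in {\bf (P\ref{P1})} and {\bf (P\ref{P2})} cancel in the product $AB$, and this holds precisely because all indices lie in the same generation $T_k$. The existence of a $\pm1$ orthogonal system of size $4^k$ is standard, by the character construction above.
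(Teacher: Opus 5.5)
Your proof is correct and follows the paper's argument. The paper likewise takes a Sylvester/Walsh Hadamard system on $T_k$ (your characters of $(\Z/2\Z)^{2k}$ are the same system up to relabeling) and applies Lemma~\ref{lem:TCadditive} with $AB = C_1C_2$ from \textbf{(P\ref{P1})}--\textbf{(P\ref{P2})} within generation $k$. It then sums \textbf{(P\ref{P3})} over the $4^k$ terms. You also correctly spell out details the paper leaves implicit: the $\mu_t$ are nonzero by \textbf{(P\ref{P3})}, and the scale factors $2^{n-k}$ cancel because all indices lie in a single generation.
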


\begin{proof}
	Since $|T_k| = 4^k$ is a power of 2, we can find an orthogonal system of functions $\{\theta_i: T_k \to \{-1,1\}\}_{i\in T_k}$ by Sylvester's construction of Hadamard matrices \cite[$\S$~2.1.1]{Hor07} (also known as Walsh matrices). The inequality \eqref{E:TCtheta} with $\alpha_k = (C_1C_2C_3)^{-1}$ immediately follows for this system by properties {\bf (P\ref{P1})}-{\bf (P\ref{P3})}, Lemma~\ref{lem:TCadditive}, and the fact that $|T_k| = 4^{k}$.
\end{proof}

\subsection{Proof of Condition {\bf (C\ref{C1})}}\label{S:C1}
The thickness function we take on $E(Gr_n)$ is the uniform probability measure; $th(uv) = \frac{1}{|E(Gr_n)|}$ for all $uv\in E(Gr_n)$. Theorem \ref{thm:iso-to-Sobolev} implies that in order to prove the Sobolev inequality \eqref{E:Sobolev} for a general function $f: V(Gr_n) \to \R$, it suffices to prove it in 
the case $f = \one_A$ for some connected $A \sbs V(Gr_n)$ with connected complement, also known as a {\it simply connected} set.
To prove the Sobolev inequality for such indicator functions, we need to make some observations on graphs and planar grids. 

\subsubsection{Graph and Planar Topology}
We now collect some basic facts about general graphs, as well as facts specific to the topology of planar grids. Since these latter facts can fail quite badly in graphs such as trees and cycles (whose transportation cost spaces do embed into $L_1$ with uniformly bounded distortion), they can be seen as important metric-topological reasons for the distortion $c_1(\TC(Gr_n))$ being large.
We begin with the elementary facts holding in any graph. 

\begin{proposition} \label{prop:separated}
	Let $G = (V,E)$ be any finite unweighted graph, $d$ the associated shortest path metric in $G$, and $\diam$ the associated diameter, allowing the value $\infty$ in the event that $G$ is not connected. Let $A \sbs V$ be connected. Then the following are true.
	\begin{enumerate}[{\rm (1)}]
		\item $\diam(A)+1 \leq |A|$.
		\item If there exist $r > 0$ and an $r$-separated subset $S \sbs A$ (meaning $d(u,v) \geq r$ for all $u,v\in S, u\neq v$) with $|S| \geq 2$, then $r|S| \leq 3|A|$.
	\end{enumerate}
\end{proposition}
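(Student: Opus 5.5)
Both parts follow from a spanning tree of the induced subgraph on $A$. Since $A$ is connected, $G[A]$ has a spanning tree $T_A$ with $|A|$ vertices and $|A|-1$ edges, and distances in $T_A$ dominate distances in $G$.

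For (1), take $u,v\in A$ with $d(u,v)=\diam(A)$. The path from $u$ to $v$ in $T_A$ has at least $d(u,v)$ edges, hence at least $\diam(A)+1$ distinct vertices, all lying in $A$. Therefore $\diam(A)+1\le|A|$. The case $|A|=1$ is trivial.

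For (2), I would use the standard depth-first (Euler tour) walk around $T_A$. This is a closed walk of length $2(|A|-1)$ that visits every vertex of $A$. List the points of $S$ in the order of their first visit along the walk, $s_1,\dots,s_m$ with $m=|S|\ge2$. Consecutive points $s_j,s_{j+1}$, and also $s_m$ and $s_1$ around the cycle, are joined by disjoint arcs of the closed walk. Each arc has length at least the graph distance between its endpoints, which is at least $r$ by separation. Summing over the $m$ arcs gives
\[
rm\le 2(|A|-1)<3|A|.
\]
This yields $r|S|\le 3|A|$, in fact with constant $2$.

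An alternative route would be to pack disjoint balls of radius about $r/2$ and count the vertices of $A$ they contain along paths. The tour argument avoids case analysis about whether such balls fit inside $A$.

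The only point requiring care is that the arcs really are disjoint and together cover at most the full tour length. This is immediate because the first-visit times are strictly increasing and the closing arc wraps around. The hypothesis $|S|\ge2$ ensures every arc joins two distinct points, so that the separation bound applies to it.
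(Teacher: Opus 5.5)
Your proof is correct. Part (1) is essentially the paper's argument: a simple path inside $A$ joining two points at distance $\mathrm{diam}(A)$ has at least $\mathrm{diam}(A)+1$ vertices.

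For part (2) you take a genuinely different route. The paper uses a packing argument. For each $s\in S$ it lets $B_{r/3}(s)$ be the set of vertices of $A$ reachable from $s$ by a path inside $A$ of length at most $r/3$. These sets are pairwise disjoint by the triangle inequality. Each contains at least $r/3$ vertices, because a path inside $A$ from $s$ to another point $s_0\in S$ has length at least $r$, so its initial segment lies in $B_{r/3}(s)$. Summing gives $(r/3)|S|\le |A|$.

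Your argument is instead a covering (traveling-salesman type) bound. Cutting the closed Euler tour of a spanning tree of $A$, which has length $2(|A|-1)$, at the first-visit times of the points of $S$ yields $|S|$ arcs. Each arc is a walk in $G$ between two distinct points of $S$, hence has length at least $r$, so $r|S|\le 2(|A|-1)$.

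The paper's version is local and needs no spanning tree, but the choice of radius $r/3$ costs a constant; the paper explicitly does not claim $3$ is optimal. Yours is global and gives the sharp constant: equality $r|S|=2(|A|-1)$ holds, e.g., for a path on $r+1$ vertices with $S$ its two endpoints, or for a star with $S$ its set of leaves and $r=2$. In the later application to the grid Sobolev inequality either bound suffices; yours would only replace the factor $3C_2$ there by $2C_2$.
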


We do not claim that the constant 3 in the second item above is optimal. This proposition plays a crucial role in the proof of Theorem~\ref{thm:(C1)grids}, by way of \eqref{E:isoperimetric}.

\begin{proof}[Proof of Proposition~\ref{prop:separated}]
	The first item is even more obvious than the second: the cardinality of $A$ bounds the cardinality of any chosen path in $A$ (which exists since $A$ is connected), and a path can be chosen such that its cardinality bounds $\diam(A)+1$.
	
	We now prove the second item. For each $s \in S$, let $B_{r/3}(s)$ denote the subset of $A$ consisting of all those $a \in A$ that can be joined to $s$ by a path that is completely contained in $A$ and that has length at most $r/3$. Since $S$ is $r$-separated, the triangle inequality implies that the sets $\{B_{r/3}(s)\}_{s\in S}$ are pairwise disjoint. Moreover, we have the cardinality bound $|B_{r/3}(s)| \geq r/3$. This is due to the following fact: since $|S| \geq 2$ and $A$ is connected, for any $s \in S$, we can choose $s_0 \in S \setminus \{s\}$ and a path $P \sbs A$ joining $s$ to $s_0$. We must have that $|P| \geq r$, and thus $|P \cap B_{r/3}(s)| \geq r/3$. This cardinality bound together with the pairwise disjointness implies $r/3 \cdot |S| \leq |A|$.
\end{proof}

For a graph $G = (V,E)$, we define the {\it symmetric vertex-boundary} of $A$ as $\partial_V A := \bigcup_{uv\in\partial_E A} \{u,v\}$.
Obviously, $\partial_V A = \partial_V A^c$ and
\begin{equation} \label{eq:EbndryVbndry}
	|\partial_V A| \leq 2|\partial_E A|.
\end{equation}

We now give the two essential lemmas concerning the topology of planar grids and sketch their proofs. The first is the graph-theoretical version of the classical result from planar topology that simply connected subsets of the 2-dimensional sphere have connected boundary. Recall that when $A$ is a subset of the vertex set of a graph, we have defined $A$ to be simply connected if $A$ and $A^c$ are both connected.

\begin{lemma} \label{lem:connectedboundary}
	If $A \sbs V(Gr_n)$ is simply connected, then $\partial_V A$ is connected.
\end{lemma}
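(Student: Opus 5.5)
We plan to prove Lemma~\ref{lem:connectedboundary} by passing to the planar embedding of $Gr_n$ and using a discrete Jordan-curve-type argument on the faces of the grid. We may assume $\emptyset\neq A\neq V(Gr_n)$, since otherwise $\partial_V A=\emptyset$ and there is nothing to prove.

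\textbf{Setup.} Regard $Gr_n$ as embedded in the square $[0,2^n]^2$, with unit square cells as bounded faces. Add an auxiliary outer cell $F_\infty$. Each edge of $\partial_E A$ is dual to a dual edge joining the two cells on either side of it; for boundary edges of the big square, one of these cells is $F_\infty$. We form the dual subgraph $\Gamma$ whose vertices are the cells and whose edges are the duals of the edges in $\partial_E A$.

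\textbf{Step 1 (the dual cut is a single cycle).} Work with the planar dual of $Gr_n$, where the outer face plays the role of $F_\infty$. Standard planar duality says that an edge set is a minimal cut (bond) in a connected plane graph if and only if its dual is a cycle. Since $A$ and $A^c$ are both connected, $\partial_E A$ is a bond: removing it leaves exactly two components, and restoring any single edge reconnects them. Hence $\Gamma$ is a single cycle $c_1,\dots,c_m$ of cells, with consecutive cells sharing an edge of $\partial_E A$.

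\textbf{Step 2 (walk along the cycle).} Within each cell, consider the boundary edges from $\partial_E A$ that the cycle uses to enter and leave it. Every edge of $\partial_E A$ lies on the boundary of a bounded cell $c_j$, so its endpoints are corners of $c_j$. If $c_j$ is a unit square, its four corners form a $4$-cycle in $Gr_n$. The corners of $c_j$ lying in $\partial_V A$ are exactly those incident to the boundary edges of $c_j$, namely the crossed edges. These crossed edges appear in pairs because the cycle enters and exits $c_j$. A case check on the $2^4$ labelings of corners by $A/A^c$ then shows:
\begin{itemize}
\item either there are two crossed edges, sharing a vertex or opposite;
\item or there are four crossed edges, in the checkerboard labeling.
\end{itemize}
In every case, the corners of $c_j$ in $\partial_V A$ are connected via edges of the square among themselves. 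For example, with two opposite crossed edges all four corners lie in $\partial_V A$ and the square connects them. Consecutive cells $c_j,c_{j+1}$ share a crossed edge, so both of its endpoints lie in both corner sets. Chaining around the cycle connects all of $\partial_V A$.

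\textbf{Main obstacle.} The cell $F_\infty$ is not a square. Consecutive bounded cells adjacent to $F_\infty$ in $\Gamma$ are linked through $F_\infty$ by crossed edges on the outer boundary of the big square, and these may be far apart. To handle this, we connect their endpoints along the outer boundary path of the square. The bond property forces the crossed outer edges to be exactly two, one entering and one leaving $F_\infty$. Hence $A\cap\partial[0,2^n]^2$ and its complement are arcs of the outer cycle, and the vertices between the two crossed outer edges need not lie in $\partial_V A$.

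Instead, we avoid routing through $F_\infty$ altogether. The cycle $\Gamma$ minus $F_\infty$ is a path of bounded cells, and the chaining argument along this path already connects everything, since each crossed edge belongs to at least one bounded cell. The remaining work is verifying the bond$\leftrightarrow$cycle duality with the outer face included. We cite this as standard, or prove it directly from Euler's formula.
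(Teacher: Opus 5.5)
Your proposal is correct and is essentially the paper's argument. The paper's curve $B$, built from unit squares centred at the vertices, is exactly your dual cut $\Gamma$ with the outer face split into separate points of $\partial S$; where the paper shows via Euler's formula and a face count that $B$ is a single path or cycle, you cite the standard bond/dual-cycle correspondence, and the chaining step afterwards is the same (consecutive crossed edges either share a vertex or are opposite sides of a grid square). One harmless slip: the corners of $c_j$ lying in $\partial_V A$ are not ``exactly'' the endpoints of the crossed edges of $c_j$, since a corner can lie in $\partial_V A$ through a crossed edge outside $c_j$, but you only need those endpoints to be connected, and your case check shows that.
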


\begin{proof}
For each $x = (x_1,x_2)\in V(Gr_n)$, let $\bar{x} := [x_1-\tfrac{1}{2},x_1+\tfrac{1}{2}]\times [x_2-\tfrac{1}{2},x_2+\tfrac{1}{2}] \sbs \R^2$, i.e., $\bar{x}$ is the closed $1\times 1$ square centered at $x$. For $A\sbs V(Gr_n)$, let $\tilde{A} := \intr\left(\bigcup_{x\in A}\bar{x}\right) \sbs \R^2$, where $\intr(U)$ denotes the interior of $U$ as a subset of the topological space $\R^2$.
We also introduce $\tilde{A^c} := \intr\left(\bigcup_{x\in A^c}\bar{x}\right) \sbs \R^2$.

We get two disjoint subsets of the square $S:=[-\tfrac{1}{2},2^n+\tfrac{1}{2}]\times [-\tfrac{1}{2},2^n+\tfrac{1}{2}]$. It is clear that the only parts of the square $S$ which are not in $\tilde{A^c}\cup \tilde A$ are (1) The boundary of $S$, (2) The closed line segments between the squares belonging to different regions, $\tilde{A^c}$ and $\tilde{A}$.

We form a graph $B$ in which the closed line segments between the squares belonging to different regions, $\tilde{A^c}$ and $\tilde{A}$, are edges, and their endpoints are vertices. It is important to observe that the degrees of the vertices in this graph can be either $1$, $2$, or $4$, and the degree $1$ occurs if and only if the vertex is on the boundary of $S$.
The statement about the vertices in the boundary of $S$ is clear. Let us consider a vertex that is not in the boundary of $S$. Edges from it are perpendicular to the line segments joining the vertices of the grid belonging to different sets of $\tilde{A^c}$ and $\tilde{A}$. This is why the degree of the vertices of $B$, which are not in the boundary of $S$, can be either $2$ or $4$.

By a version of the Euler theorem (on Euler tours and trails, see \cite[Section 3.3 and Exercise 3.3.5]{BM08}), the graph $B$ is an edge-disjoint union of paths and cycles, and paths can only join different vertices of the boundary. 

Now we consider the graph $\widehat{B}$, in which, in addition to $B$, we include all line segments of the boundary of $S$. We get a planar graph. By the Euler formula, $n-m+\ell=c+1$, where $n$ is the number of vertices, $m$ is the number of edges, $\ell$ is the number of faces, and $c$ is the number of connected components.  Since the graph has three faces, $\tilde{A^c}$, $\tilde{A}$, and the outer face, we get $n-m=c-2$. 

Consider two cases: (a) There is a path among the paths and cycles, (b) There are only cycles.

In both cases, the planar graph obtained by picking one path in case (a), one cycle in case (b), and omitting further paths and cycles already has three faces. Therefore, in order to show that there are no further paths and cycles, it suffices to show that adding any of them increases the number of faces.

Case (a). Adding a path of length $k$ adds at most $k-1$ vertices, $k$ edges, and does not change the number of components of $\widehat{B}$; therefore, it increases the number of faces. Adding a cycle. There are two subcases, in one of them the cycle is a new component in $\widehat{B}$. In this case, we add $k$ vertices, $k$ edges, and $1$ component, so the number of faces has to increase by $1$. In the second subcase, we add more edges than vertices without changing the number of components, so the number of faces should increase.

Case (b). We need consider adding a cycle only, but this can be done exactly as in the previous paragraph.

The last step: let $e_1,\dots, e_n$ be edges of the only path/cycle in the partition of $B$ described above, in their natural order. Then edges of $Gr_n$ crossing $e_k$ and $e_{k+1}$ either have a common vertex (if $e_k$ and $e_{k+1}$ are perpendicular) or their vertices are vertices of a grid square (if the edges are on a straight line). All of these vertices are in $\partial_V A$. In either case, we get connectivity.
\end{proof}    

\begin{lemma} \label{lem:diambound}
	For any $A \sbs V(Gr_n)$ with $A,A^c \neq \emptyset$, we have $\min\{\diam(A),\diam(A^c)\} \leq 2\diam(\partial_V A)$.
\end{lemma}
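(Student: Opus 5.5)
The plan is a projection argument in the $\ell_1$ metric. First I record that $\partial_V A\neq\emptyset$: $Gr_n$ is connected and $A,A^c\neq\emptyset$, so some edge joins $A$ to $A^c$. Set $D:=\diam(\partial_V A)$. Let $[a,b]$ and $[c,d]$ be the ranges of the $x$- and $y$-coordinates of points of $\partial_V A$. Since the metric is the $\ell_1$-metric, $b-a\le D$ and $d-c\le D$.

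\emph{Key observation.} Call a row $\{y=y_0\}$ or a column $\{x=x_0\}$ of $Gr_n$ monochromatic if it lies entirely in $A$ or entirely in $A^c$. A row is a path in $Gr_n$, so if it meets both $A$ and $A^c$ it contains an edge of $\partial_E A$, and hence points of $\partial_V A$ with that $y$-coordinate. Therefore every row with $y\notin[c,d]$ is monochromatic, and likewise every column with $x\notin[a,b]$ is monochromatic. Moreover, two adjacent monochromatic rows outside $[c,d]$ have the same colour. Otherwise every vertical edge between them would lie in $\partial_E A$, putting boundary vertices at those $y$-values. It follows that all rows below $c$ share one colour, and all rows above $d$ share one colour. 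The same statements hold for columns.

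\emph{Case analysis.}
\begin{enumerate}
\item[(i)] Suppose there is both a monochromatic row outside $[c,d]$ and a monochromatic column outside $[a,b]$. Every such row meets every such column, so all of them carry one common colour, say that of $A$. Then $A^c\sbs[a,b]\times[c,d]$, which gives $\diam(A^c)\le (b-a)+(d-c)\le 2D$.
\item[(ii)] Suppose there is neither. Then the whole grid lies in $[a,b]\times[c,d]$, and both diameters are at most $2D$.
\item[(iii)] Suppose, by symmetry, that there are rows outside $[c,d]$ but no columns outside $[a,b]$. Then $[a,b]=[0,2^n]$, so $D\ge 2^n$.
\begin{itemize}
\item If the rows below $c$ and the rows above $d$ have the same colour, or one of these families is empty, then the other set is contained in $[0,2^n]\times[c,d]$. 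Its diameter is at most $2^n+D\le 2D$.
\item Otherwise, say the rows below $c$ lie in $A$ and the rows above $d$ lie in $A^c$. Then $A\sbs[0,2^n]\times[0,d]$ and $A^c\sbs[0,2^n]\times[c,2^n]$. Averaging the two resulting bounds gives
\[\min\{\diam A,\diam A^c\}\le 2^n+\tfrac12\big(d+2^n-c\big)\le 2^n+\tfrac12(2^n+D)\le 2D,\]
using $d-c\le D$ and $2^n\le D$.
\end{itemize}
\end{enumerate}

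The main obstacle is this last subcase, where $A$ and $A^c$ lie on opposite sides of a horizontal band. There neither set is confined to a small box, as the example of $A$ being the bottom half of the grid shows. The factor $2$ in the statement is exactly what absorbs it, via the bound $D\ge 2^n$ forced by the boundary meeting every column. The rest of the argument is the routine observation that rows and columns avoiding the coordinate ranges of $\partial_V A$ are monochromatic.
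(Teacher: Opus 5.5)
Your proof is correct and is essentially the paper's argument, with the bookkeeping reversed: you confine $A^c$ to the bounding box of $\partial_V A$, using that rows and columns outside it are monochromatic and that such a row and column must share a colour. The paper instead takes the bounding box of $A^c$ and uses the two unicoloured grid sides meeting at a corner to place boundary vertices on the extensions of the box's two far sides, which gives $\diam(\partial_V A)\ge m$. Your cases (ii) and (iii) are the paper's trivial case: once $D\ge 2^n$, every subset of the grid has diameter at most $2^{n+1}\le 2D$. So the subcase analysis in (iii) is unnecessary, and that subcase is not really an obstacle.
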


\begin{proof} Color vertices of $A$ blue and of $A^c$ yellow. The result is immediate if there are two opposite multicolored sides of the grid. In fact, in this case, $\diam(\partial_V A)=2^n$. On the other hand, it is clear that $\max\{\diam(A),\diam(A^c)\} \leq 2^{n+1}$.

So, assume that each pair of opposite sides has one unicolored side, say, $S_1$ for one pair and $S_2$ for the other pair. Since $S_1$ and $S_2$ have a common vertex, the color of $S_1$ and $S_2$ should be the same. Assume without loss of generality that $S_1$ and $S_2$ are both blue. 

Now, consider the smallest rectangle containing all yellow vertices. Let $m$ be its longest side. Consider the two opposite shortest sides of this rectangle. They contain yellow vertices, and their straight-line extensions necessarily contain blue vertices. Therefore $\diam(\partial_V A)\ge m$. On the other hand, since $A^c$ is contained in this rectangle, we have $\diam (A^c)\le 2m$.
\end{proof}

\subsubsection{Proof of the Sobolev Inequality for Simply Connected Sets} We now utilize Lemmas~\ref{lem:connectedboundary} and \ref{lem:diambound} to prove the Sobolev inequality
\begin{equation} \label{E:isoperimetric}
	|||\one_A||| \leq C \|\one_A\|_{W^{1,1}(th)}
\end{equation}
for all simply connected sets $A \sbs V(Gr_n)$, where $th$ is the thickness, which in the case of $Gr_n$ is the uniform probability measure on $E(Gr_n)$, the constant $C$ is bounded by $C_4(20+40C_2)$ (where $C_2,C_4$ are the data in {\bf (P\ref{P2})}, {\bf (P\ref{P4})}, and the seminorm is given by
$$|||f||| := \sum_{k=0}^{n-2}\|f\|_k$$
with
$$\|f\|_k := \Big(\sum_{t\in T_k}\left|\int f d\mu_t\right|^2\Big)^{\frac12},$$
where $\{\mu_t\}_{t\in T}$ are signed measures with 0 total mass satisfying properties {\bf (P\ref{P2})}, {\bf (P\ref{P4})}, and {\bf (P\ref{P5})}. Once \eqref{E:isoperimetric} has been proved, we apply Theorem~\ref{thm:iso-to-Sobolev}, and immediately get the following theorem, establishing condition {\bf (C\ref{C1})}.

\begin{theorem} \label{thm:(C1)grids}
	For all $\{\mu_t\}_{t\in T} \sbs \TC(Gr_n)$ satisfying {\bf (P\ref{P2})}, {\bf (P\ref{P4})}, and {\bf (P\ref{P5})},  and for all functions $f: V(Gr_n) \to \R$,
	\begin{equation*}
		\sum_{k=0}^{n-2}\Big(\sum_{t\in T_k}\left|\int f d\mu_t\right|^2\Big)^{\frac12} \leq C\sum_{uv\in E(G_n)} |f(u)-f(v)|th(uv),
	\end{equation*}
	where $C \leq C_4(20+40C_2)$.
\end{theorem}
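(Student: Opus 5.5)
The plan is to verify \eqref{E:isoperimetric} for an arbitrary simply connected $A\sbs V(Gr_n)$ and then invoke Theorem~\ref{thm:iso-to-Sobolev}. The seminorm $|||\cdot|||$ vanishes on constants, since each $\mu_t$ has total mass $0$. Since $th$ is uniform and $|E(Gr_n)|\le\frac52 4^n$, we have $\|\one_A\|_{W^{1,1}(th)}=|\partial_E A|/|E(Gr_n)|\ge \frac25 4^{-n}|\partial_E A|$. It therefore suffices to show
\[
\sum_{k=0}^{n-2}\Big(\sum_{t\in T_k}\mu_t(A)^2\Big)^{1/2}\lesssim C_4(1+C_2)\,4^{-n}\,b, \qquad b:=|\partial_V A|\le 2|\partial_E A|,
\]
using \eqref{eq:EbndryVbndry}. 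Because $\mu_t(A^c)=-\mu_t(A)$ and $A^c$ is also simply connected with the same boundary, I may replace $A$ by $A^c$ if necessary. By Lemma~\ref{lem:diambound}, I can then assume $\diam(A)\le 2\diam(\partial_V A)$. By Lemma~\ref{lem:connectedboundary}, $\partial_V A$ is connected, so Proposition~\ref{prop:separated}(1) gives $\diam(\partial_V A)+1\le b$. Then {\bf(P\ref{P4})} gives, for every $t$,
\[
|\mu_t(A)|\le C_4\min\{2^{-n-1-|t|},\,4^{-n}(2b+1)\}.
\]

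Next I will locate the relevant $t$. If $\mu_t(A)\ne0$, then $\supp(\mu_t)$ meets $A$, and it also meets $A^c$, since otherwise $\mu_t(A)=\mu_t(V)=0$. By {\bf(P\ref{P5})}, $\supp(\mu_t)\cup\{u\}$ is connected, so a path inside it from $A$ to $A^c$ crosses an edge of $\partial_E A$. Both endpoints of that edge lie in $\partial_V A$, and at most one of them is $u$. Hence $\supp(\mu_t)\cap\partial_V A\neq\emptyset$. Let $S_k\sbs T_k$ be the set of $t$ with $\mu_t(A)\ne 0$, and pick one point $p_t\in\supp(\mu_t)\cap\partial_V A$ for each such $t$. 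By {\bf(P\ref{P2})}, any two points $p_t,p_{t'}$ are at distance at least $C_2^{-1}2^{n-|t|\vee|t'|}$.

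Two consequences follow, and the first is the key point that avoids a logarithmic loss. Call a generation $k$ \emph{coarse} if $2^{n-k}>C_2 b$ and \emph{fine} otherwise. If two distinct $t,t'$ with $|t|\vee|t'|$ coarse both had $\mu_t(A),\mu_{t'}(A)\neq0$, then $\diam(\partial_V A)\ge C_2^{-1}2^{n-|t|\vee|t'|}>b$, a contradiction. So over all coarse generations together at most one $t$ contributes, and its contribution is at most $C_4 4^{-n}(2b+1)\le 3C_4 4^{-n}b$.

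For a fine generation $k$ with $|S_k|\ge2$, the points $\{p_t\}_{t\in S_k}$ form a $C_2^{-1}2^{n-k}$-separated subset of the connected set $\partial_V A$. Proposition~\ref{prop:separated}(2) then gives $|S_k|\le 3C_2 b\,2^{k-n}$. Hence in every fine generation
\[
\|\one_A\|_k\le (1+3C_2b2^{k-n})^{1/2}C_4 2^{-n-1-k}.
\]
Summing over fine $k$, i.e.\ over $k$ with $2^{-k}\le C_2 b2^{-n}$, both pieces are geometric series dominated by the smallest such $k$. The term with $1$ sums to $\lesssim C_4C_2 b 4^{-n}$. The term with $\sqrt{3C_2 b2^{k-n}}$ is $\sqrt{3C_2b}\,C_4 2^{-3n/2-1}2^{-k/2}$, which sums to $\lesssim C_4\sqrt{C_2b}\,2^{-3n/2}\sqrt{C_2b2^{-n}}=C_4C_2b4^{-n}$.

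Adding the coarse and fine contributions gives the displayed bound with a constant of the form $C_4(c+c'C_2)$. Tracking the absolute constants through these geometric sums should yield $C_4(20+40C_2)$. The main obstacle is the coarse scales: the naive per-generation bound $C_44^{-n}(2b+1)$ summed over all $\sim\log(2^n/b)$ coarse generations would lose exactly the $\log$ factor we are trying to win. The cross-generation separation in {\bf(P\ref{P2})}, combined with the connectedness of $\partial_V A$ (Lemma~\ref{lem:connectedboundary}) and the bound $\diam(A)\lesssim\diam(\partial_V A)$ (Lemma~\ref{lem:diambound}), is what removes this loss.
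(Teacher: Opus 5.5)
Your proposal is correct and follows essentially the same route as the paper. You reduce to simply connected $A$ via Theorem~\ref{thm:iso-to-Sobolev}. You then combine Lemmas~\ref{lem:connectedboundary} and~\ref{lem:diambound} with Proposition~\ref{prop:separated} to show two things: at most one $t$ from all coarse generations contributes, and the contributing $t$ in each fine generation can be counted via separated subsets of $\partial_V A$. Finally you sum geometric series. The only difference is cosmetic: you threshold at $2^{n-k}>C_2|\partial_V A|$ rather than using the paper's dyadic parameters $m,m_2$. Carrying out your sums gives a constant of at most $C_4(15+20C_2)$, within the stated bound.
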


First we need a short lemma that is an important consequence of property {\bf (P\ref{P5})}.

\begin{lemma} \label{lem:intersect}
	For every $A \sbs V(Gr_n)$ and $t \in T$, if $\mu_t(A) \neq 0$, then $\partial_V A \cap \supp(\mu_t) \neq \emptyset$.
\end{lemma}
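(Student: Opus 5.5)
We argue by contraposition: assuming $\partial_V A \cap \supp(\mu_t) = \emptyset$, we will show that $\mu_t(A) = 0$. By {\bf (P\ref{P5})}, fix a vertex $u \in V(Gr_n)$ such that $S := \supp(\mu_t)\cup\{u\}$ is connected. The idea is that a connected set avoiding the boundary of $A$ cannot meet both $A$ and $A^c$. Hence, apart from the single possibly exceptional vertex $u$, the support of $\mu_t$ lies entirely on one side of $A$. Zero total mass will then force $\mu_t(A)=0$.

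The first step is the basic observation that a connected set $P \sbs V(Gr_n)$ meeting both $A$ and $A^c$ must meet $\partial_V A$. Indeed, choose a path in $P$ from a vertex of $A$ to a vertex of $A^c$. Some consecutive pair of vertices on this path forms an edge with one endpoint in $A$ and the other in $A^c$. That edge lies in $\partial_E A$, so both of its endpoints lie in $\partial_V A$.

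Next, apply this to $P = S$. Since $\supp(\mu_t)\cap\partial_V A = \emptyset$, the only vertex of $S$ that could lie in $\partial_V A$ is $u$, and we split into two cases.

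\emph{Case 1: $u \notin \partial_V A$.} Then $S$ does not meet $\partial_V A$, so by the observation $S \sbs A$ or $S \sbs A^c$. In the first case $\mu_t(A) = \mu_t(V(Gr_n)) = 0$, and in the second $\mu_t(A) = 0$ trivially.

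\emph{Case 2: $u \in \partial_V A$.} We cannot apply the observation to $S$ directly. Instead, take two vertices $x, y \in \supp(\mu_t)$, one in $A$ and one in $A^c$, and a path in $S$ joining them. If this path avoids $u$, then by the observation it contains an edge of $\partial_E A$ whose endpoints lie in $\supp(\mu_t)$, a contradiction. So every such path passes through $u$. Split the path at $u$ into the segment from $x$ to $u$ and the segment from $u$ to $y$. Each segment is connected and meets $\partial_V A$ only possibly at $u$. Look at the first segment: its vertices other than $u$ lie in $\supp(\mu_t)$, hence avoid $\partial_V A$. By the observation applied to its initial part (which stops before $u$), those vertices all lie in the same side as $x$. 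The same holds for the second segment with $y$. This still does not give a contradiction, because $u$ itself can bridge between $A$ and $A^c$. For example, $u$ could lie in $A$ with a neighbor in $\supp(\mu_t)\cap A^c$; but then that neighbor is in $\partial_V A$, a contradiction. In general, the neighbors of $u$ on the path lie in $\supp(\mu_t)$, and the edges from $u$ to them run to both sides unless the sides agree. One of these edges crosses between $A$ and $A^c$, and its $\supp(\mu_t)$ endpoint then lies in $\partial_V A$, which is a contradiction.

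Therefore $\supp(\mu_t)$ lies entirely in $A$ or entirely in $A^c$, and in either case $\mu_t(A)=0$, which completes the contrapositive. The main obstacle is handling the auxiliary vertex $u$ in Case 2, and it is resolved by looking at the edges incident to $u$ along a connecting path, as in the last step. If $\supp(\mu_t)$ happens to be empty, then $\mu_t = 0$ and there is nothing to prove.
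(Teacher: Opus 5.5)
Your proof is correct and follows essentially the same route as the paper: connectivity of $\supp(\mu_t)\cup\{u\}$ forces an edge of $\partial_E A$ with both endpoints in that set, and since these two endpoints are distinct, at least one of them lies in $\supp(\mu_t)$. The paper uses this last observation directly, so your case split is unnecessary, including the detour in Case 2 through the neighbors of $u$; your Case 2 argument nonetheless reaches the same conclusion correctly.
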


\begin{proof}
	Let $A \sbs V(Gr_n)$ and $t \in T$. Assume that $\mu_t(A) \neq 0$. Since $\mu_t$ has 0 total mass, it cannot happen that $\supp(\mu_t) \sbs A$ or $\supp(\mu_t) \sbs A^c$. Thus, $\supp(\mu_t) \cap A \neq \emptyset$ and $\supp(\mu_t) \cap A^c \neq \emptyset$. Since $\supp(\mu_t) \cup \{u\}$ is connected for some $u \in V(Gr_n)$ by {\bf (P\ref{P5})}, it is easy to see that the previous sentence implies that there exist $x,y \in \supp(\mu_t) \cup \{u\}$ such that $(x,y) \in \partial_E A$. By definition, this implies $x,y \in \partial_V A$. Since it cannot happen that both $x$ and $y$ equal $u$, at least one of $x,y$ belongs to $\supp(\mu_t)$, showing $\partial_V A \cap \supp(\mu_t) \neq \emptyset$.
\end{proof}

We now prove the Sobolev inequality for simply connected sets.

\begin{proof}[Proof of \eqref{E:isoperimetric}]
	Let $A \sbs V(Gr_n)$ be simply connected. Notice that the terms in the sum on the left-hand side of \eqref{E:isoperimetric} are
	$$\|\one_A\|_k = \Big(\sum_{t\in T_k}|\mu_t(A)|^2\Big)^{\frac12},$$
	and that the norm on the right-hand side is
	\begin{equation} \label{eq:rhsGr}
		\|\one_A\|_{W^{1,1}(th)} = \frac{|\partial_E A|}{|E(Gr_n)|} \geq \frac{2}{5}4^{-n}|\partial_E A|.
	\end{equation}
	It is clear that each side of \eqref{E:isoperimetric} is unchanged if we replace $A$ with $A^c$, so it suffices to assume that $A,A^c \neq \emptyset$ and that $\diam(A) \leq \diam(A^c)$.
	
	Choose $m\in\N$ such that
	\begin{equation} \label{eq:|Vboundary|}
		2^{m-1} \leq |\partial_V A| < 2^{m},
	\end{equation}
	which implies by \eqref{eq:EbndryVbndry} that
	\begin{equation} \label{eq:|Eboundary|}
		2^{m-2} \leq |\partial_E A|.
	\end{equation}
	and hence by \eqref{eq:rhsGr} and \eqref{eq:|Eboundary|},
	\begin{equation} \label{eq:4^(-n)2^m}
		4^{-n}2^m \leq 10\|\one_A\|_{W^{1,1}(th)}.
	\end{equation}
	By Lemma~\ref{lem:diambound}, Lemma~\ref{lem:connectedboundary}, and Proposition~\ref{prop:separated}(1), we have that
	\begin{equation} \label{eq:diam(A)}
		\diam(A)+1 \leq 2\diam(\partial_V A)+1 < 2|\partial_V A| \overset{\eqref{eq:|Vboundary|}}{\leq} 2^{m+1} \overset{\eqref{eq:|Eboundary|}}{\leq} 8|\partial_E A|.
	\end{equation}
	Of course, in order to prove the desired inequality, we only need to consider those $t\in T$ with $\mu_t(A) \neq 0$. We proceed to examine those elements $t \in T$ for which this can happen.
	
	Consider the constant $C_2$ from property {\bf (P\ref{P2})}. Let $m_2 \in \Z$ such that
	\begin{equation} \label{eq:m2def}
		2^{m_2-1} \leq C_2 < 2^{m_2}.
	\end{equation}
	We claim that there is at most one element $t \in \cup_{k=0}^{n-m-m_2}T_k$ such that $\mu_t(A) \neq 0$. Indeed, suppose towards a contradiction that we have $t,t' \in T$ with $|t|,|t'| \leq n-m-m_2$, $t\neq t'$, and $\mu_t(A),\mu_{t'}(A) \neq 0$. By Lemma~\ref{lem:intersect}, the intersections $\partial_V A \cap \supp(\mu_t)$ and $\partial_V A \cap \supp(\mu_{t'})$ are both nonempty. Then by \eqref{eq:diam(A)}, {\bf(P\ref{P2})}, and \eqref{eq:m2def}, we have
	\begin{equation*}
		2^{m} \geq \diam(\partial_V A) \geq C_2^{-1}2^{n-(n-m-m_2)} > 2^{-m_2}2^{m+m_2},
	\end{equation*}
	a contradiction. This proves the claim.
	
	The claim together with {\bf (P\ref{P4})} obviously imply that
	\begin{equation*}
		\sum_{k=0}^{n-m-m_2} \|\one_A\|_k \leq C_4 4^{-n}(\diam(A)+1),
	\end{equation*}
	and then by \eqref{eq:diam(A)} and \eqref{eq:rhsGr}, we get that
	\begin{equation} \label{eq:smallkestimate}
		\sum_{k=0}^{n-m-m_2} \|\one_A\|_k \leq C_4 4^{-n}8|\partial_E A| \leq 20C_4\|\one_A\|_{W^{1,1}(th)}.
	\end{equation}
	
	Now we want an upper bound for the term $\|\one_{A}\|_k$ that will be sufficient for larger values of $k$. Fix $k \in \{0,\dots n-2\}$. We first bound the cardinality of the set $\{t \in T_k: \mu_t(A) \neq 0\}$. By Lemma ~\ref{lem:intersect}, this is a subset of $\{t\in T_k: \partial_V A \cap \supp(\mu_t) \neq \emptyset\}$. There are two cases to consider: (i) the cardinality of this set is at most 1, and (ii) the cardinality of this set is at least 2. In case (i), we have the cardinality bound that we want by assumption. In case (ii), the hypothesis of Proposition~\ref{prop:separated}(2) is met, and thus we have by {\bf (P\ref{P2})}, Lemma~\ref{lem:connectedboundary}, Proposition~\ref{prop:separated}(2), and \eqref{eq:|Vboundary|} that
	\begin{equation*}
		|\{t\in T_k: \partial_V A \cap \supp(\mu_t) \neq \emptyset\}| \leq 3C_22^{k-n}|\partial_V A| \leq 3C_22^{m+k-n}.
	\end{equation*}
	Therefore, in both cases, we have  that
	\begin{equation*}
		|\{t \in T_k: \mu_t(A) \neq 0\}| \leq 1+3C_22^{m+k-n}.
	\end{equation*}
	Combining this with {\bf (P\ref{P4})}, we get
	\begin{align*}
		\|\one_A\|_k^2 &\leq (1+3C_22^{m+k-n})(C_42^{-n-1-k})^2 \\
		&= (C_42^{-n-1-k})^2 + 3C_2C_4^22^{m-3n-k-2},
	\end{align*}
	and then this estimate gives us 
	\begin{align*}
		\sum_{k=n-m-m_2+1}^{n-2} \|\one_A\|_k &< C_42^{-n-1}\sum_{k=n-m-m_2+1}^{\infty}2^{-k} + \sqrt{3C_2}C_4 2^{\frac{m}{2}-\frac{3n}{2}-1}\sum_{k=n-m-m_2+1}^{\infty} 2^{-\frac{k}{2}} \\
		&= C_42^{-n-1}2^{m+m_2-n} + \sqrt{3C_2}C_4 2^{\frac{m}{2}-\frac{3n}{2}-1} 2^{\frac{m+m_2-n}{2}}(\sqrt{2}-1)^{-1} \\
		&= 4^{-n}2^{m-1}C_4(2^{m_2} + \sqrt{3C_2}2^{\frac{m_2}{2}}(\sqrt{2}-1)^{-1}) \\
		&\overset{\eqref{eq:m2def}}{\leq} 4^{-n}2^{m-1}C_4C_2(2+\sqrt{6}(\sqrt{2}-1)^{-1}) \\
		&\overset{\eqref{eq:4^(-n)2^m}}{\leq} 5\|\one_A\|_{W^{1,1}(th)}C_4C_2(2+\sqrt{6}(\sqrt{2}-1)^{-1}) < 40C_4C_2\|\one_A\|_{W^{1,1}(th)}.
	\end{align*}
	Combining with \eqref{eq:smallkestimate}, we conclude that
	\begin{equation*}
		\sum_{k=0}^{n-2} \|\one_A\|_k \leq C_4(20+40C_2)\|\one_A\|_{W^{1,1}(th)}.\qedhere
	\end{equation*}
\end{proof}

\subsection{$L_1$-Distortion of $\tc(Gr_n)$}\label{S:c1Gr}
We now apply the results of the preceding subsections in order to obtain lower bounds on $c_1(\tc(Gr_n))$ in Theorem~\ref{thm:c1(TC(Gr))2}, which proves Theorem~\ref{thm:c1(TC(grids))}.

\begin{theorem} \label{thm:c1(TC(Gr))2}
$c_1(\tc(Gr_n)) \geq 5760^{-1}(n-1)$.
\end{theorem}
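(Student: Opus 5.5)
The plan is to combine the three ingredients already established and feed them into Theorem~\ref{thm:c1(TC)}. First, Theorem~\ref{thm:Cvalues} gives measures $\{\mu_t\}_{t\in T}\sbs\TC(Gr_n)$ satisfying {\bf (P\ref{P1})}--{\bf (P\ref{P5})} with $C_1=\frac12$, $C_2=4$, $C_3=16$, $C_4=1$. The index set is $\M=T=\sqcup_{k=0}^{n-2}T_k$, so $n'=n-2$. The thickness is the uniform probability measure on $E(Gr_n)$. Since every edge has length $1$, normalization \eqref{eq:probabilitymeasure} holds trivially.

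Next, I verify the two conditions with explicit constants. For {\bf (C\ref{C1})}, Theorem~\ref{thm:(C1)grids}, which relies on {\bf (P\ref{P2})}, {\bf (P\ref{P4})} and {\bf (P\ref{P5})}, yields the Sobolev inequality \eqref{E:Sobolev} with
\[
C\le C_4(20+40C_2)=1\cdot(20+160)=180.
\]
For {\bf (C\ref{C2})}, Theorem~\ref{thm:(C2)grids}, which relies on {\bf (P\ref{P1})}--{\bf (P\ref{P3})}, gives for every $k\in\{0,\dots,n-2\}$ a Walsh--Hadamard orthogonal system with values $\pm1$ satisfying \eqref{E:TCtheta}, where
\[
\alpha_k=(C_1C_2C_3)^{-1}=\left(\tfrac12\cdot4\cdot16\right)^{-1}=\tfrac1{32}.
\]

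Applying Theorem~\ref{thm:c1(TC)} then gives
\[
c_1(\TC(Gr_n))\ge C^{-1}\sum_{k=0}^{n-2}\alpha_k\ge \frac{n-1}{180\cdot 32}=\frac{n-1}{5760}.
\]
This is exactly the claimed bound, since $180\cdot32=5760$.

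There is no genuine obstacle at this stage, because the heavy lifting was done in Theorem~\ref{thm:(C1)grids}. The only care needed is bookkeeping: the constants from Theorem~\ref{thm:Cvalues} must be plugged into the hypotheses of both theorems consistently. One should also note that Theorem~\ref{thm:c1(TC)} is stated for a sequence of graphs but is really applied to a single $n$, with $C$ independent of $n$. That application uses the linearization \eqref{E:NSred}, which is proved in \S\ref{sec:NSred}.

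Finally, to deduce Theorem~\ref{thm:c1(TC(grids))}, I will reduce from general $\{0,\dots,N\}^2$ to a dyadic grid. Take $2^n\le N<2^{n+1}$. Then $Gr_n$ is isometrically a subset of $\{0,\dots,N\}^2$, and $\TC$ of a subset embeds linearly and isometrically into $\TC$ of the whole space. This gives $c_1\ge\Omega(\log N)$. The matching upper bound is \eqref{E:UpperFRT}.
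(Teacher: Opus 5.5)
Your proposal is correct and follows essentially the same route as the paper: you combine Theorems~\ref{thm:c1(TC)}, \ref{thm:(C2)grids} and \ref{thm:(C1)grids} with the constants of Theorem~\ref{thm:Cvalues}, obtaining $C_1C_2C_3C_4(20+40C_2)=32\cdot 180=5760$ over the $n-1$ levels $k=0,\dots,n-2$. Your extra remark reducing $\{0,\dots,N\}^2$ to a dyadic subgrid, via the isometric linear embedding of $\TC$ of a subset, is a valid way to make the deduction of Theorem~\ref{thm:c1(TC(grids))} explicit.
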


\begin{proof}
By Theorems~\ref{thm:c1(TC)}, \ref{thm:(C2)grids}, and \ref{thm:(C1)grids}, we have that
\[c_1(\TC(Gr_n)) \geq (n-1)(C_1C_2C_3C_4)^{-1}(20+40C_2)^{-1},\]
where $C_1,C_2,C_3,C_4$ are constants for which there exist measures $\{\mu_t\}_{t\in T}$ satisfying the properties {\bf(P\ref{P1})}-{\bf(P\ref{P5})}. The term $C_1C_2C_3C_4(20+40C_2)$ can be chosen with an upper bound of $5760$ by Theorem~\ref{thm:Cvalues}.
\end{proof}



Combining Theorem \ref{thm:c1(TC(Gr))2} with the estimate \eqref{E:UpperFRT}, we get as a corollary the first of our two main results.

\begin{theorem1.1} 
$c_1(\tc(\{0,\dots,n\}^2))= \Theta(\log n)$.
\end{theorem1.1}

\section{Edge Replacement and $\os$ products}\label{S:EdgeRepl}

\subsection{Definitions and Basic Concepts}
In the remaining part of this paper, it will be convenient to follow closely the terminology for graphs introduced by Serre \cite[\S2]{Ser03}. Namely, graphs $G$ have finite vertex sets $V(G)$ and directed edge sets $Y(G) \sbs (V(G) \times V(G)) \setminus \{(u,u): u\in V(G)\}$. The set $Y(G)$ contains each edge in both directions, meaning $uv \in Y(G) \iff vu \in Y(G)$. When we need to pick a direction on each edge, we call it {\it a choice of an orientation}, the resulting subset of $Y(G)$ is denoted $E(G)$.

When $e = (u,v) \in Y(G)$, we denote $u$ by $e^+$, $v$ by $e^-$, and $(v,u)$ by $\bar{e}$. A sequence of vertices $\{u_i\}_{i=0}^k \sbs V(G)$ is a {\it directed path} for the orientation $E(G)$ if $(u_{i-1},u_i) \in E(G)$ for every $i\in\{1,\dots k\}$ and is a {\it path} if $(u_{i-1},u_i) \in Y(G)$ for every $i\in\{1,\dots k\}$. A sequence of edges $\{e_i\}_{i=0}^k \sbs E(G)$ is a {\it directed edge path} if $e_{i-1}^+ = e_i^-$ for all $i\in\{1,\dots k\}$ and is an {\it edge path} if $e_i\in\{f_i,\bar{f_i}\}$ for some directed edge path $\{f_i\}_{i=0}^k$.
\medskip

We consider a special class of graphs called $st$-graphs. Classes of graphs similar to the one we consider have been studied under the same name before, but the exact definition can vary from author to author. For us, an {\it $st$-graph} is a  graph $G$, as defined above, having  two distinct vertices, a {\it source} vertex $s_G$ and {\it sink} vertex $t_G$, equipped with a \underline{weighted} {\it geodesic metric} $d_G: V(G)\times V(G) \to [0,1]$, an orientation $E(G)$, and a  {\it thickness function} $th_G: E(G) \to (0,1]$ satisfying the axioms {\bf (a)-(c)} below. Note that the assumption that $th_G$ takes positive values implies that every edge $e\in E(G)$ belongs to the introduced below $\gamma_i$ for at least one $i\in I$.
\begin{enumerate}[{\bf (a)}]
	\item $d_G(s_G,t_G) = 1$,
	\item every directed path in orientation $E(G)$ is a geodesic,
	\item there exists a finite index set $I$ and a collection of (not necessarily distinct) directed edge paths $\{\gamma_i\}_{i\in I}$ from $s_G$ to $t_G$ in $E(G)$ such that $th_G(e) = \frac{|\{i\in I:e\in\gamma_i\}|}{|I|}$ for all $e\in E(G)$.
\end{enumerate}

 We overload notation and write $d_G: Y(G) \to [0,1]$ to denote the {\it length function} $d_G(e) := d_G(e^-,e^+)$. Being a geodesic metric, the values of $d_G$ on $V(G)\times V(G)$ are completely determined by its values on $E(G)$.


For $A\sbs V(G)$, we let $\partial_G A$ denote the {\it edge boundary} of $A$, meaning the set of all edges $e\in E(G)$ for which $|\{e^-,e^+\} \cap A| = 1$. The next proposition contains two fundamental structural results about $st$-graphs.

\begin{proposition} \label{prop:thickness}
	Let $G$ be an $st$-graph. Then the following facts hold.
	\begin{enumerate}[{\rm (1)}]
		\item $\sum_{e\in E(G)}th_G(e)d_G(e) = 1$.
		\item If $A \sbs V(G)$ and $|\{s_G,t_G\} \cap A| = 1$, then $\sum_{e\in\partial_G A} th_G(e) \geq 1$.
	\end{enumerate}
\end{proposition}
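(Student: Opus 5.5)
Both parts follow from axiom \textbf{(c)}, which writes $th_G$ as the average over $i\in I$ of the indicator functions $\one_{\gamma_i}$ of the directed edge paths $\gamma_i$ from $s_G$ to $t_G$. The plan is to exchange the order of summation and then establish, for each single path $\gamma_i$, an identity (for (1)) or an inequality (for (2)).

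For (1), write
\[
\sum_{e\in E(G)} th_G(e)\,d_G(e) \;=\; \frac{1}{|I|}\sum_{i\in I}\sum_{e\in E(G)} \one_{\{e\in\gamma_i\}}\, d_G(e).
\]
Let $L(\gamma_i)$ denote the total length of $\gamma_i$, i.e.\ the sum of $d_G(e)$ over the edges of $\gamma_i$.

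The first point is that the inner sum equals $L(\gamma_i)$. This needs $\gamma_i$ to use each edge at most once, so that counting $e\in\gamma_i$ as a set agrees with summing along the path. A directed path that is a geodesic cannot repeat a vertex: all edge lengths are positive, because $d_G$ is a metric and $e^-\neq e^+$, so a repeated vertex would let us shorten the path. Hence $\gamma_i$ repeats no vertex, and in particular no edge.

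The second point is that $L(\gamma_i)=1$. By axiom \textbf{(b)}, $\gamma_i$ is a geodesic from $s_G$ to $t_G$, so $L(\gamma_i)=d_G(s_G,t_G)$, and this equals $1$ by axiom \textbf{(a)}. Averaging over $i\in I$ then gives (1).

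For (2), suppose $|\{s_G,t_G\}\cap A|=1$. Each $\gamma_i$ is a sequence of vertices beginning at $s_G$ and ending at $t_G$, so exactly one endpoint lies in $A$. The vertex sequence must therefore switch between $A$ and $A^c$ at some consecutive pair, and the edge joining that pair lies in $\partial_G A$. Thus every $\gamma_i$ contains at least one edge of $\partial_G A$, which gives
\[
\sum_{e\in\partial_G A} th_G(e) \;=\; \frac{1}{|I|}\sum_{i\in I}\bigl|\gamma_i\cap\partial_G A\bigr| \;\ge\; \frac{1}{|I|}\sum_{i\in I}1 \;=\; 1.
\]

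The only real subtlety is the step in (1) identifying the membership count $\sum_e \one_{\{e\in\gamma_i\}}d_G(e)$ with the path length. If $\gamma_i$ could repeat edges the identity would fail, and the geodesic property together with positivity of edge lengths is exactly what rules this out. Everything else is a direct exchange of summations.
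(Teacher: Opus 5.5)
Your proof is correct and follows the paper's argument. Both parts exchange the sum over edges with the average over the paths $\gamma_i$ from axiom \textbf{(c)}: for (1) you use that each $\gamma_i$ has length $d_G(s_G,t_G)=1$, and for (2) that each $\gamma_i$ must contain an edge of $\partial_G A$. Your extra check that a geodesic directed path cannot repeat an edge, so that set-membership counting agrees with path length, is a detail the paper leaves implicit, and you justify it correctly.
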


\remove{
\cg{{\bf Chris's comment:} It is actually okay to use a weaker version of Lemma \ref{lem:monotonicity} with an inequality instead of equality. All we need to prove the Sobolev inequality is that the perimeter is nondecreasing over the course of the development. I made changes to this effect. I also got rid of the simple connectivity assumption everywhere. It was relevant neither here nor anywhere else in the paper.} }

\begin{proof} For (1), we have
		\begin{align*}
			\sum_{e\in E(G)}th_G(e)d_G(e) = \sum_{e\in E(G)} \frac{|\{i\in I: e\in\gamma_i\}|}{|I|}d_G(e)
			= \frac{1}{|I|}\sum_{i\in I}\sum_{e\in \gamma_i}d_G(e) = 1.
		\end{align*}
		
		For (2), let $A\sbs V(G)$ with $|\{s_G,t_G\}\cap A| = 1$. For each of the directed edge paths $\gamma_i$, there must exist at least one edge $\gamma_i \cap \partial_G A$. Hence, we have
		\begin{align*}
			\sum_{e\in\partial_G A} th_G(e) = \sum_{e\in\partial_G A} \frac{|\{i\in I: e\in\gamma_i\}|}{|I|} = \frac{1}{|I|}\sum_{i\in I}\sum_{e\in\partial_G A \cap \gamma_i} 1 &\geq 1.\qedhere
		\end{align*}
\end{proof}

A fundamental operation on the class of $st$-graphs is edge replacement.

\begin{definition}[Edge Replacement]\label{D:EdgeRep}
	Suppose that $G,H$ are $st$-graphs and $e \in E(G)$. We form the {\it edge replacement} $st$-graph $G \cup_e H$ by replacing $e$ with $H$ in the following way:
	\begin{itemize}
		\item The vertex set is $V(G \cup_e H) := V(G) \sqcup V(H)/\sim$, where $\sim$ is the equivalence relation generated by $e^- \sim s_H$ and $e^+ \sim t_H$. The composition of the inclusion map and the quotient map gives an injection from each of $V(G)$ and $V(H)$ into $V(G \cup_e H)$. We identify $V(G)$ as a subset of $V(G \cup_e H)$ in this way, without making reference to the inclusion and quotient maps. On the other hand, we will use special notation for the injection of $V(H)$ into $V(G \cup_e H)$; the image of $v \in V(H)$ is written as $e\os v$.
		\item With this identification, the source and sink vertices are defined by $s_{G\cup_e H} = s_G$ and $t_{G\cup_e H} = t_G$.
		\item When $e' = (u,v) \in E(H)$, we write $e \os e'$ to denote the ordered pair $(e\os u,e\os v) \in V(G \cup_e H) \times V(G \cup_e H)$ and $e\os E(H)$ to denote $\{e\os e'\}_{e'\in E(H)}$. The directed edge set $E(G \cup_e H)$ is defined to be $(E(G)\setminus\{e\}) \cup (e\os E(H)) \sbs V(G \cup_e H) \times V(G \cup_e H)$. Note that the mapping $E(H) \ni e' \mapsto e\os e' \in e\os E(H)$ is bijective and that the two subsets $E(G)\setminus\{e\}$ and $e\os E(H)$ are disjoint.
		\item The geodesic metric is defined on edges by $d_{G\cup_e H}(e') = d_G(e')$ if $e'\in E(G)\setminus\{e\}$ and $d_{G\cup_e H}(e\os e') = d_G(e)d_H(e')$ if $e'\in E(H)$.
		\item The thickness function is defined by $th_{G\cup_e H}(e') = th_G(e')$ if $e'\in E(G)\setminus\{e\}$ and $th_{G\cup_e H}(e\os e') = th_G(e)th_H(e')$ if $e'\in E(H)$.
	\end{itemize}
\end{definition}

\begin{remark} The informal germ of this construction was implicit in many papers, the earliest we know is \cite{BO79}. Initial steps in its formalization were made in \cite{LR10}.
\end{remark}

We shall use the following proposition without reference throughout this section.

\begin{proposition}
	For all $st$-graphs $G,H$ and edges $e\in E(G)$, the edge replacement graph $G\cup_e H$ satisfies the axioms {\bf (a)-(c)} of an $st$-graph.
\end{proposition}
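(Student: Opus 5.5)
We check axioms (a)--(c) one at a time for $K := G\cup_e H$, using only that $G$ and $H$ satisfy them. Write $e=(e^-,e^+)$, with $e^-$ identified with $s_H$ and $e^+$ with $t_H$. The idea is that $K$ is obtained by gluing $H$, scaled by $d_G(e)$, in place of $e$, so every path in $K$ projects to a path in $G$ in a controlled way.

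For (b), and (a) with it, take any directed path in $K$ with respect to $E(K)$. The key observation is that the copy $e\os H$ meets the rest of $K$ only at $e^-$ and $e^+$. Any maximal subpath lying inside $e\os E(H)$ is therefore a directed path of $H$, scaled by $d_G(e)$. If such a subpath runs from $s_H$ to $t_H$, axiom (b) for $H$ together with (a) for $H$ gives it length exactly $d_G(e)$, so we may replace it by the single edge $e$. If it starts or ends at an interior vertex of $H$, then that end is an endpoint of the whole path.

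Next we compute $d_K$ on the old vertex set. Any path in $K$ between vertices of $G$ can be shortened to a path in $G$: each excursion through $H$ from $e^\mp$ back to $e^\mp$ can be deleted, and each traversal from $e^-$ to $e^+$ has length at least $d_H(s_H,t_H)d_G(e)=d_G(e)$. Hence $d_K$ restricted to $V(G)$ equals $d_G$, and similarly $d_K(e\os u,e\os v)=d_G(e)d_H(u,v)$ on $V(H)$. Indeed, a detour leaving $H$ through $e^-$ and returning through $e^+$ has length at least $d_G(e)=d_G(e)\,d_H(s_H,t_H)$, which by the triangle inequality in $H$ is at least the cost of staying inside $H$. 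This gives (a) at once: $d_K(s_K,t_K)=d_G(s_G,t_G)=1$.

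To finish (b), let $P$ be a directed path from $x$ to $y$. In the cases where both endpoints lie in $V(G)$, or both in the copy of $H$, the reduction above together with (b) for $G$ and $H$ shows the length of $P$ equals the corresponding distance. The remaining case is a directed path starting at an interior vertex $e\os u$ of $H$, leaving through $t_H=e^+$, and continuing in $G$ to some vertex $y$ (or the mirror case entering through $e^-$). Its length is
\[
d_G(e)\,d_H(u,t_H) + d_G(e^+,y).
\]
To show this equals $d_K(e\os u,y)$, we need that no route leaving through $e^-$ is shorter, i.e.
\[
d_G(e)\,d_H(u,s_H) + d_G(e^-,y) \ge d_G(e)\,d_H(u,t_H) + d_G(e^+,y).
\]
Since $e$ followed by the directed path from $e^+$ to $y$ is a directed path in $G$, axiom (b) for $G$ gives $d_G(e^-,y)=d_G(e)+d_G(e^+,y)$. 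The inequality then reduces to $d_H(u,s_H)+1\ge d_H(u,t_H)$, which is the triangle inequality in $H$. This case bookkeeping is the main obstacle. It also has to handle paths that pass through the copy of $H$ in the middle, which the replacement argument reduces back to paths in $G$.

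For (c), let $\{\gamma_i\}_{i\in I}$ be the paths for $G$ and $\{\eta_j\}_{j\in J}$ those for $H$. The index set for $K$ is $I\times J$, and the path $\gamma_{(i,j)}$ is $\gamma_i$ if $e\notin\gamma_i$, and otherwise $\gamma_i$ with $e$ replaced by $e\os\eta_j$. A directed edge path traverses $e$ at most once, since it is a geodesic, so this is well defined. Counting, for $e'\ne e$ the fraction of indices containing $e'$ is $|\{i: e'\in\gamma_i\}|\,|J|/(|I||J|)=th_G(e')$. For $e\os e'$ the fraction is $th_G(e)\,th_H(e')$. Both agree with Definition \ref{D:EdgeRep}. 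Finally, $d_K$ takes values in $[0,1]$ because it is geodesic, $K$ is connected, and every vertex lies on a geodesic from $s_K$ to $t_K$, which has length $1$.
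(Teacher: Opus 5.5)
Your proof is correct, and for axiom {\bf (c)} it is exactly the paper's argument: the index set $I_G\times I_H$, the paths $\gamma_{i,j}$ obtained by splicing $e\os\gamma_j$ into $\gamma_i$ in place of $e$, and the same two-case count of thicknesses. The paper dismisses {\bf (a)} and {\bf (b)} as straightforward, and your isometric-inclusion computation and exit-through-$e^{\pm}$ case analysis (reducing to $d_H(u,s_H)+1\ge d_H(u,t_H)$) correctly supply those omitted details; the one step you use tacitly, that a directed path can enter the copy of $H$ only at $e^-$ and leave only at $e^+$, holds because every edge of $H$ lies on some $\eta_j$ and {\bf (b)} for $H$ rules out directed cycles.
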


\begin{proof} Let $G,H$ be $st$-graphs and $e\in E(G)$. The axiom {\bf (a)} (stating that $d_{G\cup_e H}(s_{G\cup_e H},t_{G\cup_e H}) = 1$) and {\bf (b)} (that every directed path in $G\cup_e H$ is a geodesic) are straightforward, and we omit the details. For the axiom {\bf (c)}, we will describe the construction of the directed edge paths $\{\gamma_i\}_{i\in I}$ in $G\cup_e H$.
		
		Let $\{\gamma_i\}_{i\in I_G}$ and $\{\gamma_j\}_{j\in I_H}$ be the directed edge paths in $G$ and $H$, respectively, witnessing the axiom {\bf (c)} of $st$-graphs. The index set for $G\cup_e H$ will be $I_G \times I_H$, and for each $i\in I_G$ and $j\in I_H$, we form the directed edge path $\gamma_{i,j}$ from $s_{G\cup_e H}$ to $t_{G\cup_e H}$ by
		
        \begin{equation*}
			\gamma_{i,j} =\begin{cases} \gamma_i &\hbox{ if }e\notin \gamma_i,\\
            (\gamma_i\setminus\{e\})\cup (e\os \gamma_j)&\hbox{ if }e\in \gamma_i.			    
			\end{cases}
		\end{equation*}
		
        Now let $e' \in E(G\cup_e H)$. There are two cases: (i) $e' \in E(G)\setminus\{e\}$ or (ii) $e' = e \os e''$ for some $e'' \in E(H)$. Assume that we are in case (i). Then we have
		\begin{align*}
			th_{G\cup_e H}(e') &= th_G(e') \\
			&= \frac{|i\in I_G: e'\in\gamma_i|}{|I_G|} \\
			&= \frac{|(i,j)\in I_G\times I_H: e'\in\gamma_i|}{|I_G||I_H|} \\
			&= \frac{|(i,j)\in I_G\times I_H: e'\in\gamma_{i,j}|}{|I_G\times I_H|},
		\end{align*}
		establishing axiom {\bf (c)} in this case. Now assume that case (ii) holds. Then we have
		\begin{align*}
			th_{G\cup_e H}(e') &= th_G(e)th_H(e'') \\
			&= \frac{|i\in I_G: e\in\gamma_i|}{|I_G|}\frac{|j\in I_H: e''\in\gamma_j|}{|I_H|} \\
			&= \frac{|(i,j)\in I_G\times I_H: e\os e''\in\gamma_{i,j}|}{|I_G||I_H|} \\
			&= \frac{|(i,j)\in I_G\times I_H: e'\in\gamma_{i,j}|}{|I_G\times I_H|},
		\end{align*}
		which completes the proof.
\end{proof}

The following facts are the primary geometric properties of edge replacement graphs, and they will be used explicitly or implicitly throughout the section. Their proofs are straightforward and left to the reader. For us, a {\it graph morphism of directed} graphs is a map $f: V(H) \to V(H')$ between vertex sets of graphs such that for all $u,v\in V(H)$, we have $(f(u),f(v)) \in E(H') \iff (u,v) \in E(H)$.

\begin{proposition}[Geometry of Edge Replacement] \label{prop:edgerepgeometry}
	Let $G,H$ be $st$-graphs and $e\in E(G)$. Then the following facts hold.
	\begin{enumerate}[{\rm (1)}]
		\item For any vertices $v \in e\os V(H) \sbs V(G\cup_e H)$ and $u\in V(G) \sbs V(G\cup_e H)$, any path from $u$ to $v$ must have nonempty intersection with $\{e^-,e^+\}$.
		\item The inclusion of $(V(G),d_G)$ into $(V(G\cup_e H),d_{G\cup_E H})$ is an isometric embedding of metric spaces.
		\item The mapping $V(H) \ni v \mapsto e\os v \in V(G\cup_e H)$ is an injective graph morphism for directed graphs.
	\end{enumerate}
\end{proposition}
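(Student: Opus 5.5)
The plan is to prove the three items separately, working straight from Definition~\ref{D:EdgeRep}. The one structural fact everything rests on is the shape of the edge set. We have $E(G\cup_e H) = (E(G)\setminus\{e\})\sqcup (e\os E(H))$. Every edge in $e\os E(H)$ has both endpoints in $e\os V(H)$, and every edge in $E(G)\setminus\{e\}$ has both endpoints in $V(G)$. The two vertex sets $V(G)$ and $e\os V(H)$ meet exactly in $\{e^-,e^+\}=\{e\os s_H, e\os t_H\}$.

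For item (1), take a path $u=w_0,w_1,\dots,w_k=v$ with $u\in V(G)$ and $v\in e\os V(H)$. If $u$ or $v$ already lies in $\{e^-,e^+\}$ there is nothing to prove. Otherwise let $j$ be the first index with $w_j\in e\os V(H)$, so $j\ge 1$ and $w_{j-1}\in V(G)\setminus e\os V(H)$. Consider the edge joining $w_{j-1}$ and $w_j$ (in some direction). It cannot lie in $e\os E(H)$, since then $w_{j-1}$ would be in $e\os V(H)$. So it lies in $E(G)\setminus\{e\}$, which forces $w_j\in V(G)\cap e\os V(H)=\{e^-,e^+\}$.

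For item (3), injectivity holds because the quotient relation $\sim$ only identifies $s_H$ with $e^-$ and $t_H$ with $e^+$, and $s_H\neq t_H$, so $e^-\neq e^+$. For the morphism property, $(e\os u,e\os v)\in E(G\cup_e H)$ holds either because $(u,v)\in E(H)$, or because it is an edge of $E(G)\setminus\{e\}$ with both endpoints in $\{e^-,e^+\}$. Such an edge would be $e$ itself or its reverse. The first is excluded by definition, and the reverse cannot be in $E(G)$ once $e$ is, since orientations pick one direction per edge. This last point is the only slightly delicate detail.

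For item (2), the main step, I would show $d_{G\cup_e H}(x,y)=d_G(x,y)$ for $x,y\in V(G)$.

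\textbf{Upper bound.} Take a $G$-geodesic (as an edge path) from $x$ to $y$. If it uses $e$, replace $e$ by $e\os\gamma$, where $\gamma$ is any directed $s_H$–$t_H$ path in $H$. By axiom {\bf (b)} $\gamma$ is a geodesic, and by axiom {\bf (a)} its length is $1$, so $e\os\gamma$ has length $d_G(e)\cdot 1=d_G(e)$. The replaced path therefore has the same length as before.

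\textbf{Lower bound.} Take any path in $G\cup_e H$ from $x$ to $y$. By item (1), each maximal excursion into $e\os V(H)\setminus\{e^-,e^+\}$ enters and leaves through $\{e^-,e^+\}$. If it enters and leaves at the same vertex, delete it. If it runs between $e^-$ and $e^+$, it is the image of a path in $H$ from $s_H$ to $t_H$. That path has $d_H$-length at least $d_H(s_H,t_H)=1$, so the excursion has length at least $d_G(e)$, and we replace it by the single edge $e$. The result is a path in $G$ of no greater length, which gives $d_G(x,y)\le d_{G\cup_e H}(x,y)$.

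I expect item (2) to be the main obstacle, and within it the bookkeeping of these excursions. Item (1) handles that bookkeeping cleanly once it is proved first.
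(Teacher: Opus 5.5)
Your argument is correct, and it is the direct verification from Definition~\ref{D:EdgeRep} that the paper has in mind when it leaves this proposition to the reader as straightforward. Two small touch-ups are needed. First, injectivity in (3) rests on $e^-\neq e^+$ (edges of $G$ are not loops), which is what keeps $s_H$ and $t_H$ from being identified; it does not follow from $s_H\neq t_H$. Second, in the lower bound for (2), a path may step directly between $e^-$ and $e^+$ along an edge $e\os e''$ with $e''$ joining $s_H$ and $t_H$. Such a step is not an excursion into the interior, but it has length $d_G(e)\,d_H(s_H,t_H)=d_G(e)$, so it can be read as the $G$-edge $e$.
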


Next, we define combinations of edge replacements that play a very important role in mathematics and are the main notion for the rest of this paper.

\begin{definition}[$\os$ Products and Restricted $\os$ Products]\label{D:SlashPr}
	Let $G$ and $H$ be two $st$-graphs. Let $E' \sbs E(G)$, and let $E' = \{e_i\}_{i=1}^N$ be any enumeration of $E'$. We form the {\it restricted $\os$ product} $G\os_{E'} H$ of $G$ and $H$ recursively by
	\begin{itemize}
		\item $S^{(0)} := G$,
		\item $S^{(i)} := S^{(i-1)}\cup_{e_{i}} H$ for $1 \leq i \leq N$, and
		\item $G\os_{E'} H := S^{(N)}$.
	\end{itemize}
When $E' = E(G)$, the result is denoted $G\os H$ rather than $G\os_{E(G)} H$. The $st$-graph $G\os H$ is called the {\it $\os$ product} or {\it slash product} of $G$ and $H$.
\end{definition}

Morally, the graph $G\os_{E'} H$ is obtained by replacing every edge in $E'$ with a copy of $H$. The construction is independent of the enumeration of $E'$, up to isomorphism.  We have the following basic properties of $\os$ products.

\begin{proposition}[Basic Properties of $\os$ Products]
	Let $G,H$ be $st$-graphs. Then
	\begin{itemize}
		\item $V(G\os H) = \{e\os v\}_{e\in E(G), v\in V(H)}$ and $|V(G\os H)| = |V(G)| + |E(G)|(|V(H)|-2)$,
		\item $s_{G\os H} = s_G$ and $t_{G\os H} = t_G$,
		\item $E(G\os H) = \{e\os e'\}_{e\in E(G), e'\in E(H)}$ and $|E(G\os H)| = |E(G)||E(H)|$,
		\item $d_{G\os H}(e\os e') = d_G(e)d_H(e')$ for all $e\in E(G)$, $e'\in E(H)$, and
		\item $th_{G\os H}(e\os e') = th_G(e)th_H(e')$ for all $e\in E(G)$, $e'\in E(H)$.
	\end{itemize}
\end{proposition}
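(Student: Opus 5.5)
The plan is to prove each bullet by induction on the number of edges replaced. We view $G\os H$ as the final stage $S^{(N)}$ of the recursive construction in Definition~\ref{D:SlashPr}, with $E(G)=\{e_i\}_{i=1}^N$. For each $0\le i\le N$ we track the vertex set, the edge set, the source and sink, the length function and the thickness of $S^{(i)}$. Definition~\ref{D:EdgeRep} then makes each step mechanical.

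\textbf{Inductive invariant.} The claim for stage $i$ is:
\begin{itemize}
\item $E(S^{(i)})=\{e_j\}_{j>i}\cup\{e_j\os e'\}_{j\le i,\,e'\in E(H)}$;
\item $d_{S^{(i)}}(e_j\os e')=d_G(e_j)d_H(e')$ and $th_{S^{(i)}}(e_j\os e')=th_G(e_j)th_H(e')$;
\item the unreplaced edges keep their $G$-values;
\item the source and sink are still $s_G,t_G$.
\end{itemize}

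\textbf{Inductive step.} Pass from $S^{(i-1)}$ to $S^{(i)}=S^{(i-1)}\cup_{e_i}H$. Definition~\ref{D:EdgeRep} deletes only $e_i$ and adds the disjoint copy $e_i\os E(H)$. Lengths and thicknesses of the new edges are multiplied by $d_{S^{(i-1)}}(e_i)=d_G(e_i)$ and $th_{S^{(i-1)}}(e_i)=th_G(e_i)$ respectively, and these values equal the $G$-values by the invariant. All other edges keep their values, and the source and sink are unchanged by definition. Since $e_i\mapsto e_i\os E(H)$ is a bijection onto a set disjoint from the remaining edges, we get $|E(G\os H)|=|E(G)||E(H)|$. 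This gives the second through fifth bullets. Here we identify the unchanged edges $e_j\in E(G)$ with their images in later stages via the identifications of Definition~\ref{D:EdgeRep}.

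\textbf{Vertex count.} At step $i$, the quotient $V(S^{(i-1)})\sqcup V(H)/\sim$ glues exactly two vertices: $s_H\sim e_i^-$ and $t_H\sim e_i^+$. These are distinct, since $s_H\neq t_H$ and $e_i$ is not a loop. Hence exactly $|V(H)|-2$ new vertices are added. Summing over the $N=|E(G)|$ steps gives $|V(G)|+|E(G)|(|V(H)|-2)$.

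\textbf{Vertex set description.} Every $v\in V(G)$ is an endpoint of some edge $e$, because $th_G>0$ and so every edge lies on an $s$-$t$ path. Therefore $v=e^\pm=e\os s_H$ or $e\os t_H$, and every new vertex is of the form $e_i\os v$ by construction. This gives $V(G\os H)=\{e\os v\}$.

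\textbf{Main obstacle.} The only delicate point is bookkeeping: we must make sure that the identifications used for $V(G)\subseteq V(S^{(i)})$ at later steps are compatible. An edge $e_j$ with $j>i$ that shares an endpoint with an earlier-replaced edge must still be recognized as the original edge of $G$ with the same endpoints. This follows from Proposition~\ref{prop:edgerepgeometry}(2)–(3), since the inclusions are injective and preserve $V(G)$. We would state it once and then argue the rest by the induction above.
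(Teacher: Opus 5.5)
Your stage-by-stage induction over $S^{(0)},\dots,S^{(N)}$ is correct and is exactly the routine unwinding of Definitions~\ref{D:EdgeRep} and~\ref{D:SlashPr} that the paper leaves to the reader; the paper states this proposition without proof. One justification needs repair: every $v\in V(G)$ is an endpoint of some edge because $G$ is connected (its geodesic metric $d_G$ is finite-valued) and $s_G\neq t_G$, not because $th_G>0$, which only tells you that every \emph{edge} lies on a directed $s_G$--$t_G$ path.
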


\subsection{Elementary Developments}\label{S:ElDevel} Now, we turn our attention to
$st$-graphs that can be obtained by a sequence of iterative edge replacements by ``elementary" $st$-graphs. Specifically, let us say that an $st$-graph $H$ is {\it elementary} if it consists of a single directed path from $s_H$ to $t_H$ with all edges having thickness 1, or it consists of two directed paths from $s_H$ to $t_H$, each containing at least three vertices, that share only their endpoint vertices $\{s_H,t_H\}$ in common and all edges having thickness $\frac{1}{2}$.
Note that the edges of an elementary $st$-graph are permitted to have different lengths. Elementary graphs of the first type described above are called {\it $st$-paths}, and those of the second type are called {\it $st$-cycles}. An $st$-graph is {\it trivial} if it is an $st$-path with only 2 vertices. We say that a sequence of $st$-graphs $(G^{(n)})_{n=0}^{\infty} = (V^{(n)},E^{(n)})_{n=0}^{\infty}$ is an {\it elementary development} if
\begin{itemize}
	\item $G^{(0)}$ is a trivial $st$-path with $E^{(0)} = \{(s,t)\}$ and
	\item for each $n \geq 0$, there exists an edge $e\in E^{(n)}$ and an elementary $st$-graph $H$ (depending on $n,e$) such that $G^{(n+1)} = G^{(n)} \cup_e H$.
\end{itemize}
Note that, although an elementary development formally involves an infinite sequence of graphs, we allow the case that the sequence stabilizes and $G^{(n+1)} = G^{(n)}$ for all sufficiently large $n$, which occurs when the $st$-graph $H$ with $G^{(n+1)} = G^{(n)} \cup_e H$ is trivial. In this way, the definition of elementary development we give can accommodate finite sequences.

Fix an elementary development $(G^{(n)})_{n=0}^{\infty} = (V^{(n)},E^{(n)})_{n=0}^{\infty}$. We write $(\V,d)$ to denote the direct limit of the sequence of metric spaces $(V^{(n)},d_{G^{(n)}})$. That is, $\V$ is the increasing infinite union $\bigcup_{n\geq 0} V^{(n)}$, and $d$ denotes the metric on $\V$ that restricts to $d_{G^{(n)}}$ on each $V^{(n)}$. This metric is well-defined by Proposition~\ref{prop:edgerepgeometry}(2). We emphasize here that $\V$ is a countable set (being the countable union of finite sets), and $(\V,d)$ is generally not complete.

Let $n\geq 0$. If $G^{(n+1)} \neq G^{(n)}$, then the edge $e \in E^{(n)}$ and $st$-graph $H$ for which $G^{(n+1)} = G^{(n)}\cup_e H$ are uniquely determined (up to isomorphism, in the case of $H$) by $G^{(n)}$ and $G^{(n+1)}$ because $e$ is the unique element of $E^{(n)} \setminus E^{(n+1)}$, and $H$ is isomorphic to the subgraph $G_e$ induced by the union of all directed paths in $G^{(n+1)}$ from $e^-$ to $e^+$, with $s_H$ identified with $e^-$, $t_H$ identified with $e^+$, and $d_H$ identified with the restriction of $\frac{d}{d(e)}$ to $G_e$.

Define $\E := \bigcup_{n\geq 0} E^{(n)}$. Note that these unions are in general neither disjoint nor increasing, as, for each $n\geq 0$ with $G^{(n+1)} \neq G^{(n)}$, there is an edge $e \in E^{(n)}$ for which $E^{(n)}\setminus\{e\} \sbs E^{(n+1)}$ and $e\not\in E^{(n+1)}$. Let $th: \E \to (0,1]$ denote the function that restricts to $th_{G^{(n)}}$ on each $E^{(n)}$. This is well defined since if $e \in E^{(n)} \cap E^{(n+1)}$, then $th_{G^{(n+1)}}(e) = th_{G^{(n)}}(e)$. For each $e\in\E$, we let $n_e$ denote the greatest $n \geq 1$ such that $e\in E^{(n-1)}$ (allowing the possibility that $n_e = \infty$). Let $m\geq 0$ with $G^{(m+1)} \neq G^{(m)}$. As previously explained, there is a unique edge $e^{(m)} \in E^{(m)}$ for which there exists an $st$-graph $H^{(m)}$ with $G^{(m+1)} = G^{(m)}\cup_e H^{(m)}$. Let $\prec$ be the strict partial order on $\E$ generated by the relation $e^{(m)}\os e \prec e^{(m)}$ for all such $m \geq 0$ and all $e \in E(H^{(m)})$. We let $\preceq$ denote the corresponding partial order on $\E$ ($e \preceq e'$ if $e \prec e'$ or $e = e'$). Then $(\E,\preceq)$ has the structure of a rooted tree. The {\it root} is $(s,t)$ since it satisfies $(s,t) \succeq e$ for all $e\in\E$, and the tree structure owes to the fact that for each $e \in \E$, the {\it ancestor set} $\E_{\succ e} := \{e'\in\E: e' \succ e\}$ is totally ordered (that is, for any two distinct elements $e'$ and $e''$ of it either $e'\prec e''$ or $e''\prec e$). We say that $e_1,e_2\in\E$ are {\it incomparable} if $e_1 \not\preceq e_2$ and $e_2 \not\preceq e_1$. We turn the reader's attention to the fact that we mean the root is a maximal element in the tree (in many sources, it is required to be minimal).

\begin{lemma}[Bottleneck] \label{lem:bottleneck}
	Suppose $f_1,f_2\in\E$ are incomparable. Suppose that $x_1,x_2\in\V$ are descendant vertices of $f_1,f_2$, respectively, by which we mean vertices of the form $x_1\in \{e_1^-,e_1^+\}$ and $x_2\in \{e_2^-,e_2^+\}$ for some $e_1,e_2\in\E$ with $e_1 \preceq f_1$ and $e_2 \preceq f_2$. Then for every $n\geq 0$ with $x_1,x_2 \in V^{(n)}$, every path in $G^{(n)}$ starting at $x_1$ and ending at $x_2$ must have nonempty intersection first with $\{f_1^-,f_1^+\}$ and then with $\{f_2^-,f_2^+\}$.
\end{lemma}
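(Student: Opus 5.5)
The plan is to induct on the developments, using Proposition~\ref{prop:edgerepgeometry}(1) as the basic separation fact. First I would record the structure of the tree $(\E,\preceq)$ relevant here. Since $f_1,f_2$ are incomparable, their ancestor chains $\E_{\succeq f_1}$ and $\E_{\succeq f_2}$ meet. Because the root $(s,t)$ dominates everything, there is a least common ancestor $g$ with $f_1 \prec g$ and $f_2 \prec g$. At the unique stage $m$ with $e^{(m)} = g$, the graph $H^{(m)}$ is inserted. Then $f_1 \preceq g\os e_1'$ and $f_2\preceq g\os e_2'$ for two distinct edges $e_1',e_2'\in E(H^{(m)})$.

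The core claim I would prove is the following, by induction on $n \ge n_f$ (the stage after $f$ is replaced, or $n\le n_f$ trivially). For any $f\in \E$, any descendant vertex $x$ of $f$ lying in $V^{(n)}$, and any vertex $y\in V^{(n)}$ that is not a descendant vertex of $f$ other than $f^\pm$, every path in $G^{(n)}$ from $x$ to $y$ meets $\{f^-,f^+\}$. This is essentially Proposition~\ref{prop:edgerepgeometry}(1) applied at the stage where $f$ is replaced, with $G = G^{(n_f-1)}$ and $H=H^{(n_f-1)}$.

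It then has to be propagated to later stages. A subsequent replacement $G^{(k+1)}=G^{(k)}\cup_e H$ either replaces an edge below $f$ or an edge not below $f$. A path in $G^{(k+1)}$ can be projected to a path in $G^{(k)}$ by replacing each maximal excursion into $e\os V(H)\setminus\{e^\pm\}$ by the edge $e$. This is legitimate since such an excursion must enter and exit through $e^\pm$, again by Proposition~\ref{prop:edgerepgeometry}(1); if it enters and exits at the same endpoint, the excursion is simply deleted. The projection visits a subset of the original path's vertices in $V^{(k)}$. It keeps endpoints in $V^{(k)}$, and an endpoint in the new part is handled by the case that its excursion passes through $e^\pm$, which are descendants of the same side. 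So the hitting property transfers.

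With the claim in hand, the lemma follows. Take a path $x_1=p_0,\dots,p_r=x_2$. The vertex $x_2$ is not a descendant vertex of $f_1$ except possibly through shared endpoints, since incomparability means descendants of $f_2$ lie outside the subgraph generated by $f_1$ except at $f_1^\pm$; if $x_2\in\{f_1^\pm\}$ the statement is immediate. Hence the path hits $\{f_1^\pm\}$. Symmetrically it hits $\{f_2^\pm\}$. For the ordering, let $j$ be the last index with $p_j\in\{f_1^\pm\}$ and apply the claim for $f_2$ to the subpath from $p_j$ to $x_2$, reversed. This shows the set $\{f_2^\pm\}$ is met at an index $\ge j$, which is at least the first hit of $\{f_1^\pm\}$. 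I would interpret ``first \dots then'' in this weak sense: some visit to $\{f_1^\pm\}$ precedes some visit to $\{f_2^\pm\}$, allowing shared vertices. The main obstacle is the bookkeeping in the projection argument, particularly the edge cases where $f_1$ and $f_2$ share an endpoint and where the endpoints $x_i$ lie in newly added vertices.
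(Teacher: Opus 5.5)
Your argument is correct in outline, but it takes a longer route than the paper. The step it hinges on is exactly the fact the paper uses as its entire proof, and you assert that fact rather than derive it.

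The paper's argument is local. A descendant vertex of $f$ other than $f^-,f^+$ is never an endpoint of an edge $e\not\preceq f$. So the first edge on which a path leaves the descendant vertices of $f_1$ must itself be $\preceq f_1$, and it therefore lands in $\{f_1^-,f_1^+\}$; likewise for $f_2$.

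You use this same fact, unproved, in two places:
\begin{itemize}
\item the assertion that the endpoints $e^\pm$ of a later-replaced edge are ``descendants of the same side'';
\item the assertion that descendants of $f_2$ meet the part generated by $f_1$ only at $\{f_1^-,f_1^+\}$.
\end{itemize}

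Your path-level inductive hypothesis does not supply this fact. It is consistent with an edge $e\not\preceq f$ joining $f^-$ to a descendant vertex $u\notin\{f^-,f^+\}$ of $f$, since every path through $e$ still meets $f^-$. If such an $e$ were later replaced by $H$, the new interior vertices of $e\os V(H)$ would be non-descendants of $f$. They would be reachable from $u$ inside $e\os V(H)$ without touching $\{f^-,f^+\}$, so your claim would fail at the next stage. Hence the induction step genuinely depends on the edge-level statement: every edge of $G^{(n)}$ incident to a descendant vertex of $f$ outside $\{f^-,f^+\}$ is itself $\preceq f$.

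If you take that edge-level statement as the inductive hypothesis, it propagates trivially. Replacing $e$ creates only edges $e\os h\prec e$, together with vertices all of whose incident edges are $\prec e$. With that hypothesis in hand, the path-projection machinery is unnecessary, as is the least common ancestor $g$, which you never use anyway. A minor point: the trivial range is $n<n_f$, not $n\le n_f$; at $n=n_f$ you need Proposition~\ref{prop:edgerepgeometry}(1).

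What your write-up does add is an explicit treatment of the ordering. You take the last visit to $\{f_1^-,f_1^+\}$ and apply the separation for $f_2$ to the remaining tail, using again that $f_1^\pm$ are not descendant vertices of $f_2$ other than $f_2^\pm$. This pins down the weak sense of ``first \dots then'', which the paper leaves implicit.
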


\begin{proof} To see this, it suffices to use the observation: the only descendant vertices of $f_1$ which are ends of edges $e$ that do not satisfy $e\preceq f_1$ are  $\{f_1^-,f_1^+\}$ (and likewise for $f_2$).
\end{proof}

    \remove{Let $n\geq 0$ with $x_1,x_2 \in V^{(n)}$. For $j \in \{1,2\}$, let $\{g_j^i\}_{i=0}^{k_j}\sbs\E$ denote the (totally ordered) subset of $\E$ consisting of all elements between $e_j$ and $f_j$. That is, $e_j = g_j^0 \prec g_j^2 \prec \dots g_j^{k_j} = f_j$, and $\{g_j^i\}_{i=1}^{k_j}\sbs\E$ is maximal with respect to this property. Then $g_1^i$ and $g_2^j$ are incomparable for every $i,j$. The proof will be by induction on the number $k_1+k_2$, which we call the {\it depth} of the pair $x_1,x_2$. The base case $k_1+k_2 = 0$ is tautological because then $e_j = f_j$ and so $x_j \in \{f_j^-,f_j^+\}$.
		
		Assume that $k_1+k_2 \geq 1$. Let $\{y_t\}_{t=0}^T$ be a path from $x_1$ to $x_2$. Without loss of generality, assume that $x_1$ appears in the development later than $x_2$, meaning there exists $m \leq n$ and an $st$-graph $H$ such that $x_2 \in V(G^{(m-1)})$, $G^{(m)} = G^{(m-1)}\cup_{g_1^1} H$, and $x_1 \in g_1^1\os (V(H)\setminus\{s_H,t_H\})$ for some $e'\in E(H)$. Hence, by Proposition~\ref{prop:edgerepgeometry}(1), there is some $t_* \in \{0,\dots T\}$ for which $y_{t_*} \in \{(g_1^1)^-,(g_1^1)^+\}$. Then $\{y_t\}_{t=t_*}^T$ is a path from $y_{t_*}$ to $x_2$, and the depth of the pair $y_{t_*}$ to $x_2$ is at most $k_1+k_2-1$. Hence, by the inductive hypothesis, the path $\{y_t\}_{t=t_*}^T$ must first have nonempty intersection with $\{f_1^-,f_1^+\}$ and then with $\{f_2^-,f_2^+\}$.
	}

\remove{{It seems that we can claim that two vertices in $E^{(n)}$ can be joined by $3$ independent paths only if at some stage they were $s_H$ and $t_H$ for a cycle $H$, and can never be joined by $4$ independent paths.}\cgtwo{that seems right. One path around the "outside" and two along the "inside".} \medskip}

For $e\in\E$, we define the {\it descendant set} $\E_{\preceq e}$ to be $\{e'\in\E: e' \preceq e\}$. For $n\geq 0$, we define the {\it $n$th descendant edge set} of $e\in\E$ to be the subset $E^{(n)}_{\preceq e} := E^{(n)} \cap \E_{\preceq e}$.

\begin{lemma}[Incomparability Implies Disjointness] \label{lem:incomparabledisjoint}
	If $E' \sbs \E$ is a pairwise incomparable subset, then for every $n\geq 0$, the collection $\{E^{(n)}_{\preceq e}\}_{e\in E'}$ of subsets of $E^{(n)}$ is pairwise disjoint.
\end{lemma}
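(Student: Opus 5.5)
The argument reduces to the tree structure of $(\E,\preceq)$. Suppose, towards a contradiction, that $e_1\neq e_2$ are in $E'$ and that some edge $g\in E^{(n)}$ lies in both $E^{(n)}_{\preceq e_1}$ and $E^{(n)}_{\preceq e_2}$. Then $g\preceq e_1$ and $g\preceq e_2$. So $e_1,e_2$ both belong to $\E_{\succeq g}=\{g\}\cup\E_{\succ g}$.

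The key step is that this set is totally ordered. The paper observes that the ancestor set $\E_{\succ g}$ is totally ordered, and adjoining $g$, which lies below every element of $\E_{\succ g}$, keeps it totally ordered. Hence $e_1\preceq e_2$ or $e_2\preceq e_1$, contradicting the pairwise incomparability of $E'$. Therefore the sets $E^{(n)}_{\preceq e}$, $e\in E'$, are pairwise disjoint.

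The only point needing care, and the main potential obstacle, is the total ordering of $\E_{\succ g}$ itself, which the paper asserts without proof. If I had to justify it, I would argue as follows. Each edge $g\in\E$ other than the root is created at exactly one stage $m$, namely as $e^{(m)}\os e'$ for a unique $m$ with $g\in E^{(m+1)}\setminus E^{(m)}$. Edges are never recreated: once an edge is removed it never reappears, since new edges are fresh symbols $e^{(m)}\os e'$. So the generating relation $g\prec e^{(m)}$ gives each non-root edge a unique immediate parent. The partial order $\prec$ is the transitive closure of this parent map, and the parent's creation stage is strictly earlier, so the closure has no cycles. Therefore the ancestors of $g$ form the single chain obtained by iterating the parent map up to $(s,t)$. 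This chain is totally ordered, which gives the claim.
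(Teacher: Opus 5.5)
Your argument is correct and is essentially the paper's. The paper simply notes, by induction, that descendants of incomparable edges are incomparable; that is exactly the statement that no edge has two incomparable ancestors, i.e.\ the total ordering of $\E_{\succeq g}=\{g\}\cup\E_{\succ g}$ that you invoke.

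Your unique-parent justification of that tree structure is also fine. One precision: edges are ordered pairs of vertices, not formal symbols, so new edges are fresh because every edge of a nontrivial elementary $st$-graph $H$ has an endpoint outside $\{s_H,t_H\}$, hence contains a newly created vertex.
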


\begin{proof} It is easy to see, for example, by induction,
that descendants of incomparable edges are incomparable.	
\end{proof}

We need to single out those edges in $\E$ that were replaced by an $st$-cycle at some stage of the development. Specifically, we define $\E_{cyc}$ to be the set of all $e\in\E$ such that there exists $n \geq 0$ an $st$-cycle $H$ with $G^{(n+1)} = G^{(n)}\cup_e H$. It follows immediately from the definitions that $\E_{cyc} \subseteq \{e\in\E: n_e < \infty\}$.

\begin{lemma}[Disjoint Descendants] \label{lem:disjointdes}
	For all $n,k \geq 0$, the collection $\{E^{(n)}_{\preceq e}\}_{\substack{e\in\E_{cyc}\\th(e)=2^{-k}}}$ is pairwise disjoint.
\end{lemma}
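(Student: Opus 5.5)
By Lemma~\ref{lem:incomparabledisjoint}, it suffices to show that any two distinct edges $e_1,e_2\in\E_{cyc}$ with $th(e_1)=th(e_2)=2^{-k}$ are incomparable. Then the family $\{e\in\E_{cyc}: th(e)=2^{-k}\}$ is pairwise incomparable, and pairwise disjointness of the $n$th descendant edge sets follows for every $n$.

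The approach is a monotonicity argument along the tree $(\E,\preceq)$. I would first record two facts about how thickness changes from a replaced edge to its children.

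\begin{itemize}
\item[(i)] Suppose $e\prec e'$ with $e$ a child of $e'$, so that $e=e^{(m)}\os e''$ for some $e''\in E(H^{(m)})$ and $e'=e^{(m)}$. By the definition of edge replacement, $th(e)=th(e')\,th_{H^{(m)}}(e'')$. Since $H^{(m)}$ is elementary, $th_{H^{(m)}}(e'')$ equals $1$ if $H^{(m)}$ is an $st$-path and $\tfrac12$ if it is an $st$-cycle. Hence thickness is non-increasing along $\prec$: if $e\preceq e'$ then $th(e)\le th(e')$.
\item[(ii)] If $e'\in\E_{cyc}$, then $e'$ is replaced by an $st$-cycle, so every child $e$ of $e'$ has $th(e)=\tfrac12 th(e')$. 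Here I use that each edge of $\E$ is replaced at most once, namely at stage $n_{e'}-1$. This holds because once $e'$ is removed from $E^{(n)}$ it never reappears, as the new edges $e^{(m)}\os e''$ are distinct from old ones.
\end{itemize}

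Now suppose toward a contradiction that $e_1\prec e_2$ are comparable, both in $\E_{cyc}$, and both of thickness $2^{-k}$. Walking down the tree from $e_2$ to $e_1$, the first step goes to a child of $e_2$. Since $e_2\in\E_{cyc}$, fact (ii) says that child has thickness $2^{-k-1}$. By fact (i), every further descendant, and in particular $e_1$, has thickness at most $2^{-k-1}<2^{-k}$. This is the desired contradiction, so distinct such edges are incomparable, and Lemma~\ref{lem:incomparabledisjoint} finishes the proof.

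The main obstacle is justifying that $\prec$ is exactly the transitive closure of the child relation, and that each edge has a well-defined unique parent, so that ``walking down the tree'' makes sense. This rests on the tree structure already asserted for $(\E,\preceq)$, together with the uniqueness of the replacing graph $H$ noted earlier. I would cite those facts rather than reprove them.
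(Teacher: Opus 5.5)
Your proposal is correct and follows essentially the same route as the paper: it shows that distinct edges of $\E_{cyc}$ with equal thickness are incomparable, because any strict descendant of an edge replaced by an $st$-cycle has at most half its thickness, and then applies Lemma~\ref{lem:incomparabledisjoint}. Your facts (i) and (ii) just make explicit what the paper summarizes as ``by definition of the thickness function''; the unique-parent property you flag as the main obstacle is not needed, since $\prec$ is by definition generated by the child relation, so a chain of child steps from $e_2$ down to $e_1$ exists automatically.
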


\begin{proof}
		Note that, for all $e,e'\in\E_{cyc}$ with $e\neq e'$, if $th(e)=th(e')$, then $e,e'$ are incomparable. This is because, if $e,e'\in\E_{cyc}$ and $e' \prec e$, then, by definition of the thickness function, $th(e') \leq \frac{1}{2}th(e) < th(e)$. Then the conclusion follows from the Incomparability Implies Disjointness Lemma \eqref{lem:incomparabledisjoint}.
\end{proof}

\subsection{Perimeter Measure}
Let $A \sbs \V$ be any subset ($A$ is permitted to be infinite). The {\it edge boundary} is the subset $\partial A = \{e\in\E: |\{e^-,e^+\}\cap A|=1\}$. The {\it perimeter measure} of $A$ is defined on each subset $E' \sbs\E$ by
\begin{equation*}
	\Per_A(E') := \sum_{e\in E'\cap\partial A} th(e) \in [0,\infty].
\end{equation*}

Along with Lemma~\ref{lem:mu(A)<=Per(A)}, the next lemma is one of the two most important results concerning the geometry of developments. These lemmas should be seen as the two basic ingredients going into the Sobolev Inequality for Sets (Lemma~\ref{lem:setSobolev}).

	\begin{lemma}[Monotonicity of Perimeter] \label{lem:monotonicity}
		For all $A\sbs\V$, $e\in\E$, and $n\geq 0$, we have the inequality
		\begin{equation*}
			\Per_A(E^{(n+1)}_{\preceq e}) \geq \Per_A(E^{(n)}_{\preceq e}).
		\end{equation*}
	\end{lemma}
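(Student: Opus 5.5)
The inequality is local to a single edge replacement, so I would reduce to the step $G^{(n+1)} = G^{(n)}\cup_{f} H$ and compare the two descendant edge sets directly. If $G^{(n+1)}=G^{(n)}$ there is nothing to prove. Otherwise let $f=e^{(n)}$ be the unique edge of $E^{(n)}\setminus E^{(n+1)}$, so that $E^{(n+1)} = (E^{(n)}\setminus\{f\})\cup (f\os E(H))$. Three cases arise.

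\emph{Case $f\not\preceq e$.} By the tree structure, each new edge $f\os e'$ is a descendant of $e$ only if $f\preceq e$. Hence here $E^{(n+1)}_{\preceq e} = E^{(n)}_{\preceq e}\setminus\{f\}$. Moreover $f\notin E^{(n)}_{\preceq e}$ in this case, so the two sets coincide and equality holds.

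\emph{Case $f\preceq e$.} Every $f\os e'$ satisfies $f\os e'\prec f\preceq e$, so
\[
E^{(n+1)}_{\preceq e} = \bigl(E^{(n)}_{\preceq e}\setminus\{f\}\bigr)\cup (f\os E(H)),
\]
and the union is disjoint. The edges other than $f$ keep the same endpoints and the same thickness, so they contribute identically to both perimeters. It therefore suffices to show
\[
\sum_{e'\in E(H),\ f\os e'\in\partial A} th(f\os e') \;\ge\; th(f)\,\one_{f\in\partial A}.
\]

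If $f\notin\partial A$, the right-hand side is $0$ and the inequality is trivial. If $f\in\partial A$, exactly one of $f^-,f^+$ lies in $A$. Pull $A$ back to $H$ via the injective morphism $v\mapsto f\os v$ (Proposition~\ref{prop:edgerepgeometry}(3)), setting $A_H=\{v\in V(H): f\os v\in A\}$. Since $f^-=f\os s_H$ and $f^+=f\os t_H$, we get $|\{s_H,t_H\}\cap A_H|=1$. An edge $e'\in E(H)$ lies in $\partial_H A_H$ exactly when $f\os e'\in\partial A$. Using $th(f\os e')=th(f)\,th_H(e')$ from Definition~\ref{D:EdgeRep}, the left-hand side equals $th(f)\sum_{e'\in\partial_H A_H} th_H(e')$. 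This is at least $th(f)$ by Proposition~\ref{prop:thickness}(2), which proves the claim.

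The main point requiring care is the bookkeeping identity for descendant sets: that new edges are descendants of $e$ exactly when $f\preceq e$. This follows from the definition of $\prec$ as generated by $f\os e'\prec f$, together with the fact that ancestor sets are totally ordered. One must also confirm that the thickness on $\E$ is consistent across stages, which the paper has already noted. Everything else reduces to Proposition~\ref{prop:thickness}(2) applied to the single replaced copy of $H$.
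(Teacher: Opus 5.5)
Your argument is essentially the paper's. The paper writes $E^{(n+1)}_{\preceq e}$ as the disjoint union of the sets $E^{(n+1)}_{\preceq e_{des}}$ over $e_{des}\in E^{(n)}_{\preceq e}$ and proves $\Per_A(E^{(n+1)}_{\preceq e_{des}})\ge \Per_A(\{e_{des}\})$ for each one. That inequality is tautological unless $e_{des}$ is the replaced edge, and in that case the paper uses exactly your step: pull $A$ back to $H$, then apply Proposition~\ref{prop:thickness}(2) and $th(f\os e')=th(f)\,th_H(e')$. Isolating the single replaced edge $f$ is just a more direct way of doing the same bookkeeping.

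There is one false claim in your first case, at exactly the bookkeeping step you flagged as delicate. The correct statement is that $f\os e'\preceq e$ holds if and only if $f\preceq e$ \emph{or} $e=f\os e'$. Suppose $e$ is itself one of the new edges, say $e=f\os e'_0$. Then $f\not\preceq e$ (indeed $e\prec f$), yet $E^{(n+1)}_{\preceq e}=\{e\}$ while $E^{(n)}_{\preceq e}=\emptyset$. So neither your set identity nor ``equality holds'' is true there, since $\Per_A(\{e\})=th(e)>0$ whenever $e\in\partial A$.

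The lemma survives, because the right-hand side is $0$ in that situation, but you must say so. The paper does this up front: it first disposes of the case $E^{(n)}_{\preceq e}=\emptyset$ (that is, $e$ does not appear until after stage $n$) as trivially true. Once $E^{(n)}_{\preceq e}\neq\emptyset$, $e$ already exists by stage $n$ and so cannot be one of the new edges $f\os e'$. From then on your dichotomy on whether $f\preceq e$ is correct as written.
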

	
	\begin{proof}
		Let $A\sbs\V$, $e\in\E$, and $n \geq 0$. If $E^{(n)}_{\preceq e} = \emptyset$ (which happens if and only if $e$ does not appear until after the $n$th stage of the development), then the inequality is trivially true, and thus we may assume that $E^{(n)}_{\preceq e} \neq \emptyset$.
		
		We will prove that, for all edges $e_{des} \in E^{(n)}_{\preceq e}$
		(where $des$ stands for `descendant'),
		\begin{equation} \label{eq:Per(edes)}
			\Per_A(E^{(n+1)}_{\preceq e_{des}}) \geq \Per_A(\{e_{des}\}).
		\end{equation}
		Before establishing \eqref{eq:Per(edes)}, let us see how it implies the conclusion. Assume that \eqref{eq:Per(edes)} holds for every $e_{des}\in E^{(n)}_{\preceq e}$. Then since the set $E^{(n+1)}_{\preceq e}$ is the disjoint union of the collection of sets $\{E^{(n+1)}_{\preceq e_{des}}\}_{e_{des}\in E^{(n)}_{\preceq e}}$ (this is straightforward from the definitions), we have that 
		\begin{align*}
			\Per_A(E^{(n+1)}_{\preceq e}) &= \sum_{e_{des}\in E^{(n)}_{\preceq e}} \Per_A(E^{(n+1)}_{\preceq e_{des}}) \\
			&\overset{\eqref{eq:Per(edes)}}{\geq} \sum_{e_{des}\in E^{(n)}_{\preceq e}} \Per_A(\{e_{des}\}) \\
			&= \Per_A(E^{(n)}_{\preceq e}),
		\end{align*}
		which is the desired inequality. We now prove \eqref{eq:Per(edes)}.
		
		Let $e_{des} \in E^{(n)}_{\preceq e}$. There are two cases: (a) $e_{des}$ was not replaced at stage $n+1$, meaning $e_{des} \in E^{(n+1)}$, and (b) $e_{des}$ was replaced at stage $n+1$, meaning $e_{des}\not\in E^{(n+1)}$. In case (a), we have that $E^{(n+1)}_{\preceq e_{des}} = E^{(n)}_{\preceq e_{des}}$, and so \eqref{eq:Per(edes)} is satisfied tautologically.
		
		Assume, then, that we are in case (b). Then $G^{(n+1)} = G^{(n)}\cup_{e_{des}} H$ for some $st$-graph $H$, and
		\begin{equation} \label{eq:des(edes)}
			E^{(n+1)}_{\preceq e_{des}} = e_{des}\os E(H)
		\end{equation}
		by definition. There are two subcases to consider: (i) $e_{des}\not\in\partial A$ and (ii) $e_{des}\in\partial A$. In case (i), the right-hand side of \eqref{eq:Per(edes)} is 0, and so the inequality is trivially satisfied.
		
		Now assume that (ii) $e_{des}\in\partial A$. Then, by definition,
		\begin{equation} \label{eq:Per=th}
			\Per_A(\{e_{des}\}) = th(e_{des}).
		\end{equation}
		By Proposition~\ref{prop:edgerepgeometry}(3), the subgraph of $G^{(n+1)}$ induced by $e_{des}\os V(H)$ is naturally identified (as a directed graph) with $H$, and under this identification, $e_{des}^-$ corresponds to $s_H$ and $e_{des}^+$ to $t_H$. Let $A_H$ denote the subset of $V(H)$ identified with $A \cap (e_{des}\os V(H))$. Then $|\{s_H,t_H\} \cap A_H| = 1$ since $e_{des}\in \partial A$. Hence, by Proposition~\ref{prop:thickness}(2),
		\begin{equation} \label{eq:A_H}
			\sum_{e''\in\partial_H A_H} th_H(e'') \geq 1.
		\end{equation}
		Then we have
		\begin{align*}
			\Per_A(E^{(n+1)}_{\preceq e_{des}}) &\overset{\eqref{eq:des(edes)}}{=} \Per_A(e_{des}\os E(H)) \\
			&= \sum_{e'\in (e_{des}\os E(H))\cap \partial A} th(e') \\
			&\overset{\text{Prop }\ref{prop:edgerepgeometry}(3)}{=} \sum_{e''\in\partial_H A_H} th(e_{des}\os e'') \\
			&= \sum_{e''\in\partial_H A_H} th(e_{des})th_H(e'') \\
			&\overset{\eqref{eq:A_H}}{\geq} th(e_{des}) \\
			&\overset{\eqref{eq:Per=th}}{=} \Per_A(\{e_{des}\}),
		\end{align*}
		establishing \eqref{eq:Per(edes)}.
	\end{proof}

	We define for any subset $A\sbs\V$ the {\it total perimeter} of $A$ to be the number
	\begin{equation} \label{eq:Perdef}
		\Per(A) := \sup_{n\geq 0}\Per_A(E^{(n)}) \in [0,\infty].
	\end{equation}


\subsection{A Sobolev Inequality on $st$-Graphs}
\label{ss:st-Sobolev}
We now introduce the measures appearing on the left-hand side of the Sobolev inequality \eqref{E:Sobolev}. Fix $e\in\E_{cyc}$, and let $n\geq 0$ such that $G^{(n+1)} = G^{(n)}\cup_e H$ for some $st$-cycle $H$. Then, by definition, there are two geodesics $\gamma_1,\gamma_2 \sbs V(H)$ from $s_H$ to $t_H$ such that $V(H) = \gamma_1 \cup \gamma_2$ with $\gamma_1 \cap \gamma_2 = \{s_H,t_H\}$. For each $i\in\{1,2\}$, define
\begin{equation*}
	ht_i(e) := \max_{u\in\gamma_i} \dist_{H}(u,\{s_{H},t_{H}\}),
\end{equation*}
choose a vertex $u_i \in \gamma_i$ such that
\begin{equation*}
	\dist_{H}(u_i,\{s_{H},t_{H}\}) = ht_i(e),
\end{equation*}
and then set
\begin{align*}
	x_i(e) &:= e\os u_i \in V^{(n+1)}.
\end{align*}
The definition of an $st$-cycle in \S\ref{S:ElDevel} implies that $ht_i(e)>0$ for $i\in\{1,2\}$.
We define $\mu_e \in \TC(G^{(n+1)})$ to be the signed measure
\begin{equation*}
	\mu_e := th(e)(\delta_{x_1(e)} - \delta_{x_2(e)}).
\end{equation*}

Notice that the only possible values of $|\mu_e(A)|$ are $th(e)$ and 0, and the former value is attained if and only if $|\{x_1(e),x_2(e)\}\cap A| = 1$.

For future use, observe that
\begin{align} \label{eq:d(xi,e)}
	d(e)ht_i(e) &= \dist_H(x_i(e),\{e^-,e^+\}),
\end{align}
and therefore, defining $ht(e) := ht_1(e)+ht_2(e)$, we have
\begin{equation}\label{eq:d(x1,x2)}
	d(x_1(e),x_2(e)) \geq d(e)ht(e).
\end{equation}

Along with Lemma~\ref{lem:monotonicity}, the next lemma is the other basic ingredient going into the Sobolev Inequality for Sets (Lemma~\ref{lem:setSobolev}), by way of the Same-Thickness Sobolev Inequality (Lemma~\ref{lem:same-thicknessSobolev}).

	\begin{lemma} \label{lem:mu(A)<=Per(A)}
		For every $A\sbs\V$, every $e\in\E_{cyc}$, and every $n\geq n_e$, we have the inequality
		\begin{equation*}
			|\mu_e(A)| \leq \Per_A(E^{(n)}_{\preceq e}).
		\end{equation*}
	\end{lemma}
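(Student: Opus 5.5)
By Lemma~\ref{lem:monotonicity}, the quantity $\Per_A(E^{(n)}_{\preceq e})$ is nondecreasing in $n$. It therefore suffices to prove the inequality for $n = n_e$, the first stage at which $e$ has been replaced.

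At that stage $G^{(n_e)} = G^{(n_e-1)}\cup_e H$ for an $st$-cycle $H$, and $E^{(n_e)}_{\preceq e} = e\os E(H)$. If $|\{x_1(e),x_2(e)\}\cap A|\neq 1$, then $\mu_e(A)=0$ and there is nothing to prove. So assume exactly one of $x_1(e),x_2(e)$ lies in $A$. Then $|\mu_e(A)| = th(e)$, and the goal becomes
\[
\sum_{e'\in (e\os E(H))\cap\partial A} th(e') \;\geq\; th(e).
\]
Every edge of $H$ has thickness $\tfrac12$, so each edge $e\os e''$ carries thickness $th(e)/2$. It is thus enough to find at least two edges of $e\os E(H)$ lying in $\partial A$.

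Via Proposition~\ref{prop:edgerepgeometry}(3), identify $e\os V(H)$ with $V(H)$ and let $A_H$ be the trace of $A$. The graph $H$ is a single cycle $\gamma_1\cup\gamma_2$, and $u_1\in\gamma_1$, $u_2\in\gamma_2$ are distinct vertices on it, with exactly one of them in $A_H$. The cycle splits into two internally disjoint arcs from $u_1$ to $u_2$: one passes through $s_H$ and the other through $t_H$. Each arc runs from a vertex in $A_H$ to a vertex outside $A_H$, so each contains at least one edge of $\partial_H A_H$. Since the arcs are edge-disjoint, this gives two distinct boundary edges $e'',e'''\in E(H)$. Their images $e\os e''$ and $e\os e'''$ are edges of $G^{(n_e)}$ with exactly one endpoint in $A$, so they lie in $\partial A$. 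This yields $\Per_A(E^{(n_e)}_{\preceq e})\ge 2\cdot th(e)/2 = th(e)$.

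The one point needing care is that $u_1,u_2$ are interior vertices of their respective paths. This holds because $ht_i(e)>0$, so $u_i\notin\{s_H,t_H\}$, and it guarantees that the two arcs are genuinely edge-disjoint. There is also a bookkeeping check: $\partial A$ is defined on $\E$ via endpoints in $\V$, and that definition agrees with the edge boundary in $G^{(n_e)}$ under the identification. Beyond these checks I expect no real obstacle. The substantive content is the monotonicity lemma, which is already available.
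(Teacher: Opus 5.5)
Your proof is correct and is essentially the paper's argument. The paper also handles $n>n_e$ by Monotonicity of Perimeter (phrased as an induction on $n$ whose inductive step is Lemma~\ref{lem:monotonicity}), and it treats the base case $n=n_e$ the same way you do: it splits the cycle $e\os V(H)$ into two arcs from $x_1(e)$ to $x_2(e)$, one through $e^-$ and one through $e^+$, each containing a boundary edge of thickness $\tfrac12 th(e)$.
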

	
	\begin{proof}
		Let $A\sbs\V$ and $e\in\E_{cyc}$. If $\mu_e(A) = 0$, then the inequality holds trivially. So we may assume that $\mu_e(A) \neq 0$, and in this case it must happen that $|\mu_e(A)| = th(e)$. The proof is by induction on $n$. The base case is $n=n_e$. In this case, by definition of $n_e$, there is an $st$-cycle $H$ such that $G^{(n)} = G^{(n-1)}\cup_e H$ and $E^{(n)}_{\preceq e} = e\os E(H)$, and $\mu_e = th(e)(\delta_{x_1(e)}-\delta_{x_2(e)})$ with $|\{x_1(e),x_2(e)\}\cap A| = 1$ (since $\mu_e(A) \neq 0$). The $st$-cycle $e\os V(H)$ is the union of two paths from $x_1(e)$ to $x_2(e)$ whose intersection is $\{x_1(e),x_2(e)\}$. One of the paths goes around one side of the cycle and passes through $e^-$, and the other path goes around the other side of the cycle and passes through $e^+$. Since $|\{x_1(e),x_2(e)\}\cap A| = 1$, each of these edge paths contains at least one edge in $\partial A$, say $e_1$ and $e_2$. By definition, $th(e_1) = th(e_2) = \frac{1}{2}th(e)$. Hence, we have
		\begin{align*}
			|\mu_e(A)| = th(e)
			= th(e_1) + th(e_2)
			\leq \Per_A(e\os E(H))
			= \Per_A(E^{(n)}_{\preceq e}).
		\end{align*}
		The inductive step follows from Monotonicity of Perimeter (Lemma~\ref{lem:monotonicity}).
	\end{proof}

\begin{lemma}[Same-Thickness Sobolev Inequality] \label{lem:same-thicknessSobolev}
	For all $k\geq 0$ and all $A\sbs\V$,
	\begin{equation*}
		\sum_{\substack{e\in\E_{cyc}\\th(e)=2^{-k}}} |\mu_e(A)| \leq \Per(A).
	\end{equation*}
\end{lemma}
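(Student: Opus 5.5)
The plan is to combine Lemma~\ref{lem:mu(A)<=Per(A)} with the Disjoint Descendants Lemma~\ref{lem:disjointdes}, evaluating all perimeters at a single sufficiently late stage $n$ of the development.

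Fix $k\geq 0$ and $A\sbs\V$. Since $\Per(A)$ is a supremum over $n$, and the left-hand side may be an infinite sum, it suffices to show that for every finite subset $F \sbs \{e\in\E_{cyc}: th(e)=2^{-k}\}$ we have $\sum_{e\in F}|\mu_e(A)| \leq \Per(A)$. Given such $F$, every $e\in F$ satisfies $n_e<\infty$ (as noted, $\E_{cyc} \sbs \{n_e<\infty\}$). So we may choose $n \geq \max_{e\in F} n_e$.

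By Lemma~\ref{lem:mu(A)<=Per(A)}, applied to each $e\in F$ at this common stage $n$, we get $|\mu_e(A)| \leq \Per_A(E^{(n)}_{\preceq e})$. By Lemma~\ref{lem:disjointdes}, the sets $\{E^{(n)}_{\preceq e}\}_{e\in F}$ are pairwise disjoint subsets of $E^{(n)}$. Since $\Per_A$ is a sum of the nonnegative weights $th(e')$ over $e' \in E'\cap\partial A$, it is countably additive and monotone. Therefore
\[
\sum_{e\in F}|\mu_e(A)| \leq \sum_{e\in F}\Per_A(E^{(n)}_{\preceq e}) = \Per_A\Big(\bigsqcup_{e\in F}E^{(n)}_{\preceq e}\Big) \leq \Per_A(E^{(n)}) \leq \Per(A).
\]
Taking the supremum over finite $F$ gives the lemma.

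There is no serious obstacle. The only point needing care is the need for a single common stage $n$ at which both lemmas apply. This is why we pass to finite subfamilies first, since $n_e$ may be unbounded over the whole family. We also use that Lemma~\ref{lem:mu(A)<=Per(A)} holds for all $n\geq n_e$ rather than only at $n=n_e$; this is exactly what the Monotonicity of Perimeter (Lemma~\ref{lem:monotonicity}) provided.
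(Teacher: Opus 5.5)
Your proof is correct and is the paper's argument: the paper's one-line proof likewise combines Lemma~\ref{lem:mu(A)<=Per(A)} at a common stage $n$ with the Disjoint Descendants Lemma~\ref{lem:disjointdes} and the definition \eqref{eq:Perdef} of $\Per(A)$. Your reduction to finite subfamilies is a detail the paper leaves implicit. It is genuinely needed, since $\{e\in\E_{cyc}: th(e)=2^{-k}\}$ can be infinite, and then no single stage $n\geq n_e$ works for all $e$ at once.
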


\begin{proof}
	Follows from the Disjoint Descendants Lemma \ref{lem:disjointdes}, Lemma~\ref{lem:mu(A)<=Per(A)}, and \eqref{eq:Perdef}.
\end{proof}

\begin{lemma}[Sobolev Inequality for Sets] \label{lem:setSobolev}
	For all $A\sbs\V$,
	\begin{equation*}
		\sum_{k\geq 0} \left(\sum_{\substack{e\in\E_{cyc}\\th(e)=2^{-k}}} |\mu_e(A)|^2\right)^{\frac12} \leq (1-2^{-\frac12})^{-1}\Per(A).
	\end{equation*}
\end{lemma}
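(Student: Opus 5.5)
The plan is to combine two bounds on the level-$k$ term, using that every nonzero value $|\mu_e(A)|$ equals $th(e)=2^{-k}$. Fix $A\sbs\V$; we may assume $\Per(A)<\infty$, since otherwise there is nothing to prove. For each $k\geq 0$ set
\[
S_k:=\sum_{\substack{e\in\E_{cyc}\\ th(e)=2^{-k}}}|\mu_e(A)|, \qquad Q_k:=\Big(\sum_{\substack{e\in\E_{cyc}\\ th(e)=2^{-k}}}|\mu_e(A)|^2\Big)^{\frac12}.
\]
Since each nonzero summand equals $2^{-k}$, we get $Q_k^2 = 2^{-k}S_k$. The Same-Thickness Sobolev Inequality (Lemma~\ref{lem:same-thicknessSobolev}) gives $S_k\leq \Per(A)$, and hence
\[
Q_k\leq \big(2^{-k}\Per(A)\big)^{\frac12}.
\]
We also have the trivial bound $Q_k\leq S_k\leq \Per(A)$, but the goal is a bound that sums geometrically.

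The key step is to notice that the bound $Q_k\le 2^{-k/2}\Per(A)^{1/2}$ is only useful once it is compared with $\Per(A)$ itself, and that the comparison must be scale-invariant. Here the key structural fact is that $\Per(A)$ is quantized. Every edge of $\E$ has thickness of the form $2^{-j}$: elementary paths keep the thickness and cycles halve it. Also $\Per(A)=\lim_n\Per_A(E^{(n)})$ is a nondecreasing limit by Lemma~\ref{lem:monotonicity} applied to the root. So whenever $Q_k\neq 0$, some $e\in\E_{cyc}$ with $th(e)=2^{-k}$ has $\mu_e(A)\neq0$. Then Lemma~\ref{lem:mu(A)<=Per(A)} gives $2^{-k}\leq \Per_A(E^{(n)}_{\preceq e})\leq \Per(A)$. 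Let $k_0$ be the least integer with $2^{-k_0}\le \Per(A)$. Then $Q_k=0$ for all $k<k_0$, and for $k\geq k_0$ we have
\[
Q_k\leq 2^{-k/2}\Per(A)^{1/2}.
\]

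Summing the geometric series gives
\[
\sum_{k\ge k_0}2^{-k/2}\Per(A)^{1/2}=2^{-k_0/2}(1-2^{-1/2})^{-1}\Per(A)^{1/2}.
\]
Since $2^{-k_0}\le\Per(A)$, we have $2^{-k_0/2}\le \Per(A)^{1/2}$, so the total is at most $(1-2^{-1/2})^{-1}\Per(A)$, which is exactly the claimed constant.

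The main obstacle is the lower cutoff $k_0$. Specifically, one must justify that a nonzero level-$k$ term forces $\Per(A)\ge 2^{-k}$, which Lemma~\ref{lem:mu(A)<=Per(A)} together with the definition \eqref{eq:Perdef} supplies. The other delicate point is choosing $k_0$ as the threshold so that the geometric sum starts at the right place and reproduces the constant $(1-2^{-1/2})^{-1}$ without extra factors. The remaining steps are routine.
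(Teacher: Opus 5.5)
Your proof is correct and follows the paper's argument: you bound each level by $2^{-k/2}\Per(A)^{1/2}$ using $|\mu_e(A)|\in\{0,th(e)\}$ and the Same-Thickness Sobolev Inequality, then start the geometric series at the first level that can be nonzero, where $2^{-k}\le\Per(A)$. The paper defines the cutoff $k_*$ as the first nonzero level and gets $\Per(A)\ge 2^{-k_*}$ from Lemma~\ref{lem:same-thicknessSobolev}, whereas you define $k_0$ from $\Per(A)$ and use Lemma~\ref{lem:mu(A)<=Per(A)} directly; this is only a cosmetic difference, your remark that $\Per(A)$ is ``quantized'' is never used, and the case $\Per(A)=0$, where $k_0$ does not exist, is trivial.
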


\begin{proof}
	Let $A\sbs\V$. Let $k_*$ be the minimal value of $k$ such that there exists $e\in\E_{cyc}$ with $th(e) = 2^{-k}$ and $\mu_e(A) \neq 0$. It is easy to see that for any $e\in\E_{cyc}$, the only possible values of $|\mu_e(A)|$ are $th(e)$ and 0. Therefore, we must have by the Same-Thickness Sobolev Inequality (Lemma~\ref{lem:same-thicknessSobolev}) that
	\begin{equation} \label{eq:Per(A)k*}
		\Per(A) \geq 2^{-k_*}.
	\end{equation}
	Then we have
	\begin{align*}
		\sum_{k\geq 0} \left(\sum_{\substack{e\in\E_{cyc}\\th(e)=2^{-k}}} |\mu_e(A)|^2\right)^{\frac12} &= \sum_{k\geq k_*} \left(\sum_{\substack{e\in\E_{cyc}\\th(e)=2^{-k}}} |\mu_e(A)|^2\right)^{\frac12} \\
		&\leq \sum_{k\geq k_*} \left(\max_{\substack{e\in\E_{cyc}\\th(e)=2^{-k}}} |\mu_e(A)|\sum_{\substack{e\in\E_{cyc}\\th(e)=2^{-k}}} |\mu_e(A)|\right)^{\frac12} \\
		&\leq \sum_{k\geq k_*} 2^{-k/2}\left(\sum_{\substack{e\in\E_{cyc}\\th(e)=2^{-k}}} |\mu_e(A)|\right)^{\frac12} \\
		&\overset{\text{Lem }\ref{lem:same-thicknessSobolev}}{\leq} \sum_{k\geq k_*} 2^{-k/2}\Per(A)^{\frac12} \\
		&= (1-2^{-\frac12})^{-1}2^{-k_*/2}\Per(A)^{\frac12} \\
		&\overset{\eqref{eq:Per(A)k*}}{\leq} (1-2^{-\frac12})^{-1}\Per(A).
	\end{align*}
\end{proof}

For $n\geq 0$ and $f: V^{(n)} \to \R$, we define $\nabla f: E^{(n)} \to \R$ by
\begin{equation*}
	\nabla f(e) = \frac{f(e^+)-f(e^-)}{d(e)}.
\end{equation*}

\begin{theorem}[Sobolev Inequality] \label{thm:Sobolev}
	Let $E'\sbs\E_{cyc}$ be a finite subset and $n\geq \max_{e\in E'}n_e$. Then for all $f: V^{(n)} \to \R$,
	\begin{equation*}
		\sum_{k\geq 0} \left(\sum_{\substack{e\in E'\\th(e)=2^{-k}}} \Big|\int f d\mu_e\Big|^2\right)^{\frac12} \leq (1-2^{-\frac12})^{-1}\sum_{e\in E^{(n)}} th(e)d(e)|\nabla f(e)|.
	\end{equation*}
\end{theorem}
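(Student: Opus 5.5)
The plan is to reduce to indicator functions via Theorem~\ref{thm:iso-to-Sobolev} (or directly via a coarea/layer-cake decomposition) and then apply the Sobolev Inequality for Sets (Lemma~\ref{lem:setSobolev}).

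First, define on $\R^{V^{(n)}}$ the seminorm
\[
|||f||| := \sum_{k\geq 0}\Big(\sum_{e\in E',\,th(e)=2^{-k}}\Big|\int f\,d\mu_e\Big|^2\Big)^{1/2}.
\]
Since $n\geq n_e$ for every $e\in E'$, each $\mu_e$ is supported in $V^{(n)}$, so $|||\cdot|||$ makes sense. It is a seminorm because it is a sum of $\ell_2$-norms of linear images of $f$. Since each $\mu_e$ has zero total mass, $|||\one_{V^{(n)}}|||=0$.

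The right-hand side equals $\sum_{e\in E^{(n)}} th(e)|f(e^+)-f(e^-)|$, which is $\|f\|_{W^{1,1}(\nu)}$ on the graph $G^{(n)}$ with $\nu=th$ restricted to $E^{(n)}$. This $\nu$ is positive, and $G^{(n)}$ is connected because every edge lies on some $s$–$t$ path. By Theorem~\ref{thm:iso-to-Sobolev}, it therefore suffices to prove $|||\one_A|||\le (1-2^{-1/2})^{-1}\nu(\partial_{G^{(n)}}A)$ for all (simply connected) $A\sbs V^{(n)}$. Alternatively, one can avoid extreme points entirely: write $f=\int_{-\infty}^{\infty}(\one_{\{f>s\}}-\one_{\{s<0\}})\,ds$ and use the triangle inequality for $|||\cdot|||$ together with the coarea identity for the $W^{1,1}$ norm.

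Next, fix $A\sbs V^{(n)}$ and regard it as a subset of $\V$. Since $E'\sbs\E_{cyc}$, dropping the terms with $e\notin E'$ only decreases the left-hand side, so Lemma~\ref{lem:setSobolev} bounds $|||\one_A|||$ by $(1-2^{-1/2})^{-1}\Per(A)$. The main obstacle is that $\Per(A)=\sup_m \Per_A(E^{(m)})$ involves all later stages, whereas we need $\Per_A(E^{(n)})=\nu(\partial_{G^{(n)}} A)$.

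I would handle this by replacing $A$ with a set $\tilde A\sbs\V$ that agrees with $A$ on $V^{(n)}$ and whose perimeter does not grow after stage $n$. For each $e\in E^{(n)}$, put all later descendant vertices of $e$ (those in $e\os(\cdots)$ beyond stage $n$) into $\tilde A$ exactly when $e^-\in A$. Then an edge $e\in E^{(n)}$ with both endpoints on the same side of $A$ contributes no boundary at any later stage. An edge $e\in\partial A$ is replaced by a graph in which the new boundary edges are exactly those incident to $e^+$ along each $s$–$t$ path. A short induction, using axiom~(c) and Proposition~\ref{prop:thickness}, should then give $\Per_{\tilde A}(E^{(m)}_{\preceq e})=th(e)$ for all $m\ge n$: every path $\gamma_i$ crosses the boundary exactly once, since only its last edge touches $e^+$. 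Hence $\Per(\tilde A)=\Per_A(E^{(n)})$.

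Finally, $\mu_e(\tilde A)=\mu_e(A)$ for $e\in E'$, because the support of $\mu_e$ lies in $V^{(n_e)}\sbs V^{(n)}$. Combining this with Lemma~\ref{lem:setSobolev} applied to $\tilde A$ gives the set inequality, and the theorem follows from Theorem~\ref{thm:iso-to-Sobolev}.

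The delicate step is verifying that $\Per(\tilde A)=\Per_A(E^{(n)})$, that is, that the extension creates exactly one crossing per path. If that verification fails, I would instead define $\tilde A$ by the ``cheapest cut'' within each replaced copy, using Proposition~\ref{prop:thickness}(2) in reverse.
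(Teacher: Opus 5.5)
Your proposal is correct. Its skeleton matches the paper's: you reduce to indicators via Theorem~\ref{thm:iso-to-Sobolev}, apply the Sobolev Inequality for Sets (Lemma~\ref{lem:setSobolev}) after discarding the terms with $e\notin E'$, and identify $\Per$ with the stage-$n$ quantity $\Per_A(E^{(n)})=\sum_{e\in E^{(n)}}th(e)d(e)|\nabla\one_A(e)|$.

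The difference lies only in how you make that last identification.

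\emph{The paper's route.} It truncates the development. Every quantity in the inequality involves only $G^{(m)}$ for $m\le n$, and the definition of an elementary development allows stabilization, so one may assume $G^{(m)}=G^{(n)}$ for $m\ge n$. The $\mu_e$ with $e\in E'$ are unaffected because $n\ge n_e$. Monotonicity of Perimeter (Lemma~\ref{lem:monotonicity}) then gives $\Per(A)=\Per_A(E^{(n)})$ at once.

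\emph{Your route.} You keep the development fixed and extend $A$ to $\tilde A\sbs\V$, placing the cut of each $e\in\partial A$ at the edges of its descendant subgraph incident to $e^+$. Your verification goes through. When such an edge is replaced by an elementary graph $H$, the new boundary edges are exactly the copies of the edges of $H$ at the endpoint glued to $e^+$. For an $st$-path that is one edge of relative thickness $1$; for an $st$-cycle it is two edges of relative thickness $\frac12$. Edges not incident to $e^+$ have both endpoints on the same side. Hence $\Per_{\tilde A}(E^{(m)})=\Per_A(E^{(n)})$ for all $m\ge n$, Lemma~\ref{lem:monotonicity} handles $m\le n$, and the ``cheapest cut'' fallback is never needed.

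\emph{Comparison.} Your route costs a small induction that the truncation trick avoids. In exchange, it shows that $\Per_A(E^{(n)})$ is realized as the full perimeter of an extension of $A$ to the limit space $\V$, without altering the development.

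Your alternative of a layer-cake/coarea decomposition in place of Theorem~\ref{thm:iso-to-Sobolev} is also valid, and more elementary here. Lemma~\ref{lem:setSobolev} holds for arbitrary $A\sbs\V$, so the simple connectivity supplied by the extreme-point reduction is never used.
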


\begin{proof}
	Let $A \sbs V^{(n)}$. By Theorem \ref{thm:iso-to-Sobolev}, it suffices to prove the theorem assuming that $f = \one_A$. 
	Obviously, all quantities in the required inequality depend only on the graphs $G^{(m)}$ for $m \leq n$. Thus, we may assume that the development $(G^{(m)})_{m \geq 0}$ stabilizes after the $n$th stage, meaning $G^{(m)} = G^{(n)}$ for all $m \geq n$. In this case, Monotonicity of Perimeter (Lemma~\ref{lem:monotonicity}) and \eqref{eq:Perdef} imply
	\begin{equation*}
		\Per(A) = \Per_A(E^{(n)}),
	\end{equation*}
	and it is easy to see that
	\begin{equation*}
		\Per_A(E^{(n)}) = \sum_{e\in E^{(n)}}th(e)d(e)|\nabla \one_A(e)|.
	\end{equation*}
	With these two observations in mind, we see that the conclusion of the theorem is precisely the Sobolev Inequality for Sets (Lemma~\ref{lem:setSobolev}).
\end{proof}

\subsection{Transportation Cost Norm of Signed Sums}


\begin{theorem} \label{thm:TClowerbound}
Let $k\geq 0$, $E'$ be a finite subset of $\E_{cyc}$ with $th(e) = 2^{-k}$ for all $e\in E'$, and $n\geq \max_{e\in E'}n_e$. Then for all choices of signs $\{\eps_e\}_{e\in E'} \sbs \{-1,1\}$, we have that
	\begin{equation*}\label{E:InGraphPar}
		\Big\|\sum_{e\in E'}\eps_e\mu_e\Big\|_{\TC(G^{(n)})} \geq \sum_{e\in E'} th(e)d(e)ht(e).
	\end{equation*}
\end{theorem}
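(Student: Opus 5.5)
We prove the lower bound through Kantorovich duality: we construct one $1$-Lipschitz function $f: V^{(n)}\to\R$ with $\int f\, d\big(\sum_{e\in E'}\eps_e\mu_e\big) \ge \sum_{e\in E'} th(e)d(e)ht(e)$. Since $\mu_e = th(e)(\delta_{x_1(e)}-\delta_{x_2(e)})$, the integral equals $\sum_{e\in E'}\eps_e th(e)\big(f(x_1(e))-f(x_2(e))\big)$. So it suffices to find $f$ with
\[
\eps_e\big(f(x_1(e))-f(x_2(e))\big) \ge d(e)ht(e) \quad\text{for every } e\in E'.
\]

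\textbf{Construction of $f$.} Start from the bumps
\[
f_e(v) := \dist\big(v,\{e^-,e^+\}\big)\quad\text{for } v\in e\os V(H_e),
\]
where $H_e$ is the $st$-cycle replacing $e$. Then $f_e$ vanishes at $e^\pm$, equals $d(e)ht_i(e)$ at $x_i(e)$ by \eqref{eq:d(xi,e)}, and is $1$-Lipschitz on the cycle. Extend $f_e$ to all vertices descending from $e$ by the same distance formula, and set $\pm f_e$ to be $+f_e$ on the $x_1$-side and $-f_e$ on the $x_2$-side, with the overall sign flipped according to $\eps_e$. Concretely, $f(v) := \eps_e\,\sigma(v)\,\dist(v,\{e^-,e^+\})$ for $v$ descending from $e$, where $\sigma(v)=+1$ if $v$ lies in the branch through $x_1(e)$ and $-1$ otherwise. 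Set $f:=0$ on every vertex that is not a descendant of any $e\in E'$.

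Since all $e\in E'$ have the same thickness $2^{-k}$, they are pairwise incomparable, as in the proof of Lemma~\ref{lem:disjointdes}. By Lemma~\ref{lem:incomparabledisjoint} their descendant edge sets in $E^{(n)}$ are disjoint, so the regions on which the bumps live overlap only at the attaching vertices $e^\pm$, where every bump is $0$. Hence $f$ is well defined, and
\[
\eps_e\big(f(x_1(e))-f(x_2(e))\big)=d(e)\big(ht_1(e)+ht_2(e)\big)=d(e)ht(e),
\]
which is exactly the required bound.

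\textbf{Lipschitz check (main obstacle).} Because $d$ is a geodesic metric, it suffices to check $|f(u)-f(v)|\le d(uv)$ on each edge $uv\in E^{(n)}$.
\begin{itemize}
\item Edges outside all descendant sets join two zeros, so the condition is trivial.
\item Edges inside the descendant set of $e$ either stay within one branch or pass through $e^\pm$. The two branches meet only at $e^\pm$, where $f=0$, so no edge joins a $+$ vertex to a $-$ vertex directly.
\item Within one branch, $v\mapsto\dist(v,\{e^-,e^+\})$ is $1$-Lipschitz for $d$. It is also $1$-Lipschitz for the intrinsic metric: by Proposition~\ref{prop:edgerepgeometry}(1)--(2), distances from descendants of $e$ to $\{e^-,e^+\}$ are realized inside the descendant subgraph.
\end{itemize}
The delicate point is the branch assignment $\sigma$ for vertices added later, deep inside the replaced subgraph. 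Each descendant edge of $e$ lies in exactly one edge of $e\os E(H_e)$, and hence in exactly one of $\gamma_1,\gamma_2$, so $\sigma$ is inherited from that ancestor. The only vertices shared by descendants of the two branches are $e^-$ and $e^+$, by the Bottleneck Lemma~\ref{lem:bottleneck}. With this, the edge-by-edge check goes through, and duality yields the stated bound.
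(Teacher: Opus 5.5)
Your proof is correct. It uses the same dual test values as the paper ($\eps_e d(e)ht_1(e)$ at $x_1(e)$ and $-\eps_e d(e)ht_2(e)$ at $x_2(e)$), but certifies the Lipschitz property differently.

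The paper defines $f$ only on the finite set $\{x_1(e),x_2(e)\}_{e\in E'}$ and checks it pairwise there. For $e\neq e'$, the Bottleneck Lemma~\ref{lem:bottleneck} gives $d(x_i(e),x_j(e'))\ge d(e)ht_i(e)+d(e')ht_j(e')\ge|f(x_i(e))-f(x_j(e'))|$, and \eqref{eq:d(x1,x2)} handles $e=e'$. It then extends by McShane--Whitney and never looks at the extension.

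You instead write down an explicit global extension: signed distance to $\{e^-,e^+\}$ on each branch of each descendant region, and zero elsewhere. You check it edge by edge, which suffices because $d$ is a shortest-path metric. This replaces the paper's global path argument with a local separation fact: descendant regions of incomparable edges, and the two branches inside one region, touch each other and the rest of $G^{(n)}$ only at attachment vertices, where $f=0$.

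What each route buys: yours gives an explicit dual certificate, needs no extension theorem, and yields \eqref{eq:d(x1,x2)} as a by-product. The paper's is shorter, because only finitely many pairwise distances have to be bounded.

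Two small points on your write-up. First, state explicitly the fact your first bullet relies on: an edge of $E^{(n)}$ outside $E^{(n)}_{\preceq e}$ can meet a descendant vertex of $e$ only at $e^-$ or $e^+$. This is the observation behind the Bottleneck Lemma, and without it ``edges outside all descendant sets join two zeros'' is unjustified. Second, your remark about the intrinsic metric is unnecessary. The function $\dist(\cdot,\{e^-,e^+\})$ is $1$-Lipschitz for $d$ itself, and its value at $x_i(e)$ is already supplied by \eqref{eq:d(xi,e)}.
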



\begin{proof}
	Let $\{\eps_e\}_{e\in E'} \sbs \{-1,1\}$. Define $f: \{x_1(e),x_2(e)\}_{e\in E'} \to \R$ by
	\begin{align*}
		f(x_1(e)) &:= \eps_e d(e)ht_1(e), \\
		f(x_2(e)) &:= -\eps_e d(e)ht_2(e).
	\end{align*}
	We will see that $f$ is 1-Lipschitz. Let $e,e'\in E'$ with $e\neq e'$. Let $i,j\in\{1,2\}$. By the Same-Thickness Incomparability (Lemma~\ref{lem:same-thicknessSobolev}), $e,e'$ are incomparable. Then by the Bottleneck Lemma \eqref{lem:bottleneck}, every path in $G^{(n)}$ starting at $x_i(e)$ and ending at $x_j(e')$ must have nonempty intersection first with $\{e^-,e^+\}$ and then with $\{(e')^-,(e')^+\}$. This implies
	\begin{align*}
		d(x_i(e),x_j(e')) &\geq \dist(x_i(e),\{e^-,e^+\}) + \dist(x_j(e'),\{(e')^-,(e')^+\}) \\
		&\overset{\eqref{eq:d(xi,e)}}{=} d(e)ht_i(e) + d(e')ht_j(e') \\
		&= |f(x_i(e))| + |f(x_j(e'))| \\
		&\geq |f(x_i(e))-f(x_j(e'))|.
	\end{align*}
	To conclude that $f$ is 1-Lipschitz, it remains to observe that $|f(x_1(e))-f(x_2(e))| \leq d(x_1(e),x_2(e))$ due to \eqref{eq:d(x1,x2)}. We extend $f$ as $1$-Lipschitz to $V^{(n)}$ using the well-known McShane-Whitney extension theorem. The choice of the extension is not essential because $f$ will be never applied to points outside   $\{x_1(e),x_2(e)\}_{e\in E'}$.
    
    Using Kantorovich duality, we finish the proof with the estimate
	\begin{align*}
		\Big\|\sum_{e\in E'}\eps_e\mu_e\Big\|_{\TC(G^{(n)})} &\geq \sum_{e\in E'}\eps_e \int f d\mu_e \\
		&= \sum_{e\in E'}\eps_e (\eps_e d(e)ht_1(e)th(e) + \eps_e d(e)ht_2(e)th(e)) \\
		&= \sum_{e\in E'}th(e)d(e)(ht_1(e)+ht_2(e)) \\
		&= \sum_{e\in E'}th(e)d(e)ht(e). \qedhere
	\end{align*}
\end{proof}

\subsection{The $L_1$-Distortion of $\TC(G^{(n)})$}

\begin{theorem}[$L_1$-Distortion] \label{thm:c1(TC(dev))}
	Let $E'\sbs\E_{cyc}$ be a finite subset and $n\geq \max_{e\in E'}n_e$. Then
	\begin{equation*}
		c_1(\TC(G^{(n)})) \geq \frac{2-\sqrt{2}}{4}\sum_{e\in E'} th(e)d(e)ht(e).
	\end{equation*}
   \end{theorem}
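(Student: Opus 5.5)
The plan is to apply Theorem~\ref{thm:c1(TC)} to the single graph $G^{(n)}$. Partition $E'$ by thickness: $\M_k := \{e\in E': th(e)=2^{-k}\}$ for $k\geq 0$. Only finitely many $\M_k$ are nonempty, and empty ones are discarded. The thickness function is $th$ restricted to $E^{(n)}$. Proposition~\ref{prop:thickness}(1) gives \eqref{eq:probabilitymeasure}, since $G^{(n)}$ is an $st$-graph.

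Condition {\bf(C\ref{C1})} is exactly the Sobolev Inequality (Theorem~\ref{thm:Sobolev}), because $th(e)d(e)|\nabla f(e)| = th(e)|f(e^+)-f(e^-)|$. It holds with constant $C=(1-2^{-1/2})^{-1} = \frac{\sqrt2}{\sqrt2-1}=2+\sqrt2$.

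For {\bf(C\ref{C2})}, fix $k$ and note that $|\M_k|$ need not be a power of $2$, so Sylvester's Hadamard construction does not apply directly. Two routes are available.
\begin{itemize}
\item \emph{Embedding route.} Embed $\M_k$ into a set of size $2^j$ with $2^{j-1}<|\M_k|\leq 2^j$, use Walsh functions, and restrict them. The restrictions are no longer orthogonal, so I would avoid this.
\item \emph{Direct route (preferred).} Bypass (C2) and redo the final step of the proof of Theorem~\ref{thm:c1(TC)} directly. What is actually needed there is a family $\{\nu_i\}$ with $\sum_i|\int f\,d\nu_i|^2\leq\sum_t|\int f\,d\mu_t|^2$ and $\sum_i\|\nu_i\|_{\TC}^2$ large.
\end{itemize}

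In the direct route, take random signs: let $\eps$ be uniform on $\{-1,1\}^{\M_k}$, so the "family" $\nu_\eps = \sum_{e\in\M_k}\eps_e\mu_e$ carries the uniform probability weight. Then $\mathbb{E}|\int f\,d\nu_\eps|^2 = \sum_e|\int f\,d\mu_e|^2$ exactly, by orthogonality of Rademacher variables. This averaged version of \eqref{E:Parseval} is all that the chain \eqref{E:Show} uses, via the triangle inequality in $L_2(\eps;\ell_1)$. Hence, for a noncontractive linear $L$ with $\|L\|\leq D$ (available by \eqref{E:NSred}),
\[
\sum_k \big(\mathbb{E}_\eps\|L\nu_\eps\|_1^2\big)^{1/2} \leq (2+\sqrt2)\,D .
\]

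To bound the left side from below, use noncontractivity and Theorem~\ref{thm:TClowerbound}: $\|L\nu_\eps\|_1\geq\|\nu_\eps\|_{\TC}\geq\sum_{e\in\M_k}th(e)d(e)ht(e)$ for every sign choice $\eps$. Summing over $k$ gives
\[
c_1 \geq (2+\sqrt2)^{-1}\sum_{e\in E'} th(e)d(e)ht(e) = \tfrac{2-\sqrt2}{2}\sum_{e\in E'} th(e)d(e)ht(e),
\]
which is even better than the stated $\frac{2-\sqrt2}{4}$.

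The factor $\frac14$ in the statement presumably comes from the embedding route: padding $\M_k$ to size $2^j<2|\M_k|$, with the extra indices given $\mu=0$, costs a factor $2^{-1/2}$ in the normalization. Another $2^{-1/2}$ or a similar loss could come from other bookkeeping.

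The main obstacle is checking that Theorem~\ref{thm:c1(TC)} tolerates the averaged (random-sign) version of {\bf(C\ref{C2})} in place of an orthogonal $\pm1$ system indexed by $\M_k$ itself. I would verify this by rewriting the proof of Theorem~\ref{thm:c1(TC)} with the sums over $i$ replaced by expectations over $\eps$; every step there (Parseval, triangle inequality, lower bound by \eqref{E:TCnu}) only uses the Hilbertian identity and a uniform lower bound, so it should go through. If a referee-style reading insists on literal (C2), I would fall back on the zero-padding to a power of two, accepting the constant $\frac{2-\sqrt2}{4}$.
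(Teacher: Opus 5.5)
Your proof is correct. It differs from the paper's only in how {\bf(C\ref{C2})} is handled, and that is exactly where the paper loses a constant.

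The paper cites Theorem~\ref{thm:c1(TC)} verbatim. For each $k$ it picks $E''(k)\sbs E'(k)$ of power-of-two size carrying at least half of $\sum_{e\in E'(k)}th(e)d(e)ht(e)$, and uses Sylvester--Hadamard signs on $E''(k)$, so $\alpha_k$ is half the full sum. That selection is the source of the extra $\frac12$ in $\frac{2-\sqrt2}{4}=\frac12(1-2^{-1/2})$.

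Your Rademacher average amounts to using the Parseval frame $\{2^{-|\M_k|/2}\eps\}_{\eps\in\{-1,1\}^{\M_k}}$ instead of an orthonormal basis. The proof of Theorem~\ref{thm:c1(TC)} only needs three things: Bessel's inequality, the triangle inequality in $L_2$, and a uniform lower bound on $\|\nu\|_{\TC}$. So your averaged version of \eqref{E:Show} goes through. Combined with Theorem~\ref{thm:TClowerbound} applied to every sign pattern, it yields the stronger constant $\frac{2-\sqrt2}{2}$, which implies the statement. The price is that you re-prove Theorem~\ref{thm:c1(TC)} rather than cite it.

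Your explanation of the paper's $\frac14$ is mistaken, however. Padding $\M_k$ to $2^j$ indices with $\mu_t=0$ on the new indices costs nothing, for three reasons:
\begin{itemize}
\item {\bf(C\ref{C1})} is unchanged;
\item Theorem~\ref{thm:TClowerbound} still gives $\alpha_k=\sum_{e\in\M_k}th(e)d(e)ht(e)$;
\item the conclusion of Theorem~\ref{thm:c1(TC)} does not depend on $|\M_k|$, since the factor $|\M_k|^{-1/2}$ in \eqref{E:TCnu} is offset by there being $|\M_k|$ measures $\nu_i$.
\end{itemize}
So zero-padding also gives $\frac{2-\sqrt2}{2}$, while satisfying {\bf(C\ref{C2})} literally and letting you cite Theorem~\ref{thm:c1(TC)} unchanged. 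Your discarded restriction route works for the same reason: restricted Walsh functions are not orthogonal, but they form a tight frame, which is all \eqref{E:Parseval} requires.
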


\begin{proof}
As in the proof for planar grids, we will verify that {\bf(C\ref{C1})}-{\bf(C\ref{C2})} are satisfied for suitable measures and then apply Theorem~\ref{thm:c1(TC)}. For $k \in \{0,\dots n-1\}$,  let $E'(k)$ be the subset of $E'$ consisting of those $e\in E'$ with $th(e)=2^{-k}$. For $k\in\{0,\dots n-1\}$ such that $E'(k) \neq \emptyset$, choose a subset $E''(k) \sbs E'(k)$ such that
\begin{itemize}
    \item[(i)] $|E''(k)|$ is a power of 2 and
    \item[(ii)] $\sum_{e\in E''(k)} th(e)d(e)ht(e) \geq \frac{1}{2}\sum_{e\in E'(k)} th(e)d(e)ht(e)$.
\end{itemize}
We will verify {\bf(C\ref{C1})}-{\bf(C\ref{C2})} for the measures $\{\{\mu_e\}_{e\in E''(k)}\}_{k=0}^{n-1}$.

Since $|E''(k)|$ is a power of 2, we can find an orthogonal system of functions $\{\theta_i: E''(k) \to \{-1,1\}\}_{i\in E''(k)}$ by Sylvester's construction of Hadamard matrices \cite[$\S$~2.1.1]{Hor07} (also known as Walsh matrices). By Theorem~\ref{thm:TClowerbound} and item (ii) above, this system satisfies inequality \eqref{E:TCtheta} of condition {\bf(C\ref{C2})} with $\alpha_k = \frac{1}{2}\sum_{e\in E'(k)} th(e)d(e)ht(e)$. By Proposition~\ref{prop:thickness}(1), equation \eqref{eq:probabilitymeasure} of condition {\bf(C\ref{C1})} is satisfied, and by Theorem~\ref{thm:Sobolev}, inequality \eqref{E:Sobolev} of condition {\bf(C\ref{C1})} is satisfied with constant $C \leq (1-2^{-\frac12})^{-1}$. Hence, by Theorem~\ref{thm:c1(TC)}, we have that
\begin{equation*}
    c_1(\TC(G^{(n)})) \geq (1-2^{-\frac12}) \sum_{k=0}^{n-1} \tfrac{1}{2}\sum_{e\in E'(k)} th(e)d(e)ht(e) = \frac{2-\sqrt{2}}{4} \sum_{e\in E'} th(e)d(e)ht(e). \qedhere
\end{equation*}
\end{proof}

\subsection{Slash Powers of Cycles-with-Handles}
For $n\geq 0$, the {\it slash power} $G^{\os n}$ is defined recursively by
\begin{itemize}
	\item $G^{\os 0}$ is a trivial $st$-path.
	\item $G^{\os n+1} = G^{\os n}\os G$.
\end{itemize}
It is easy to see by induction that
\begin{itemize}
	\item $|E(G^{\os n})| = |E(G)|^n$ and
	\item $|V(G^{\os n})| = 2+(|V(G)|-2)\frac{|E(G)|^n-1}{|E(G)|-1}$. 
\end{itemize}
Notice that there is a constant $C<\infty$ (depending on $G$ but not $n$) such that $|V(G^{\os n})| \leq C^n$ for every $n\geq 0$.

Whenever $Pa$ is an $st$-path, $e_{rep}\in E(Pa)$ is an edge that gets replaced, and $Cy$ is an $st$-cycle, we call the $st$-graph $H = Pa\cup_{e_{rep}} Cy$ a {\it cycle-with-handles}. For the remainder of this section, fix a cycle-with-handles $H = Pa\cup_{e_{rep}} Cy$. Let $G$ be an $st$-graph. We call $G\os Pa$ the {\it subdivision} of $G$ and denote it by $G_{sub}$. Observe that, if we define $E_{rep}(G_{sub}) := \{e\os e_{rep}\}_{e\in E(G)} \sbs E(G_{sub})$, then $G\os H$ is equal to the restricted $\os$ product $G_{sub}\os_{E_{rep}(G_{sub})} Cy$. Henceforth, we will suppress notation and write $G_{sub}\os_{rep} Cy$ instead of $G_{sub}\os_{E_{rep}(G_{sub})} Cy$. Obviously, for every $n\geq 0$, we have that $G^{\os n+1} = (G^{\os n})_{sub} \os_{rep} Cy$.

\begin{example}[Diamond Graphs]\label{E:Diam}
	The first {\it diamond graph} $D = Pa_D\cup_{e_{rep}} Cy_D$ is a cycle-with-handles where the $st$-path $Pa_D$ is trivial and the $st$-cycle $Cy_D$ is a union of two directed $st$-paths, each containing two edges of length $\frac{1}{2}$. The {\it diamond graphs} are the graphs in the sequence of $\os$ powers $\{D^{\os n}\}_{n\geq 0}$.
\end{example}

\begin{example}[Laakso Graphs]\label{E:Laakso}
	The first {\it Laakso graph} $La = Pa_{La}\cup_{e_{rep}}Cy_{La}$ is a cycle-with-handles where the $st$-path $Pa_{La}$ contains three edges -- the two outer edges -- one of which contains $s_{La}$ and the other of which contains $t_{La}$ -- each has length $\frac{1}{4}$, and the middle edge is $e_{rep}$ with length $\frac{1}{2}$. The $st$-cycle $Cy_{La}$ equals $Cy_D$. The {\it Laakso graphs} are the graphs in the sequences of $\os$ powers $\{La^{\os n}\}_{n\geq 0}$.
\end{example}

Let $\gamma_1,\gamma_2 \sbs V(Cy)$ be directed paths from $s_{Cy}$ to $t_{Cy}$ with $\gamma_1 \cap \gamma_2 = \{s_{Cy},t_{Cy}\}$. Define the parameters
\begin{align*}
	d_{rep} &:= d_{Pa}(e_{rep}), \\
	ht_{cyc} &:= \max_{u_1\in\gamma_1}\dist_{Cy}(u_1,\{s_{Cy},t_{Cy}\}) + \max_{u_2\in\gamma_2}\dist_{Cy}(u_2,\{s_{Cy},t_{Cy}\}).
\end{align*}

\begin{theorem}[$L_1$-Distortion of $\TC$(Cycle-with-Handles)] \label{thm:c1(TC(cwh))}
	Let $H = Pa\cup_{e_{rep}} Cy$ be a cycle-with-handles. Using the notation of the previous paragraph, we have that
	\begin{equation*}
		c_1(\TC(H^{\os n})) \geq \frac{2-\sqrt{2}}{4}d_{rep}ht_{cyc} \cdot n
	\end{equation*}
	for every $n\geq 0$.
\end{theorem}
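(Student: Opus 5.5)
The plan is to realize $H^{\os n}$ as a stage of an elementary development and then apply Theorem~\ref{thm:c1(TC(dev))} with $E'$ the set of all edges replaced by a copy of $Cy$. First, build an elementary development whose stages pass through $H^{\os 0}, H^{\os 1}, \dots, H^{\os n}$, using $H^{\os (j+1)} = (H^{\os j})_{sub}\os_{rep} Cy$. From $H^{\os j}$ we replace each edge $e\in E(H^{\os j})$, one at a time, by the $st$-path $Pa$; this reaches $(H^{\os j})_{sub}$. Then we replace each edge $e\os e_{rep}$, one at a time, by the $st$-cycle $Cy$. Both $Pa$ and $Cy$ are elementary. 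One small point needs care: thickness is multiplicative under edge replacement, and $Pa$ has thickness $1$ while $Cy$ has thickness $\tfrac12$. The resulting $st$-graph therefore has exactly the thickness and length functions of $H^{\os (j+1)}$, because $th_H(e\os e_{rep}\os e')=th_{Cy}(e')$ and the lengths multiply. Let $N$ be the stage at which $G^{(N)} = H^{\os n}$.

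Next, take $E'$ to be the set of edges $e\os e_{rep}$ that were replaced by $Cy$ at round $j+1$, for $j=0,\dots,n-1$. This is a finite subset of $\E_{cyc}$ with $N\ge n_e$ for every $e\in E'$. For such an edge coming from $e\in E(H^{\os j})$ we have:
\begin{itemize}
\item its length is $d(e)d_{rep}$;
\item its thickness is $th(e)\cdot 1$;
\item $ht(e\os e_{rep}) = ht_{cyc}$, since $ht$ is computed inside the normalized copy of $Cy$ and so is independent of the edge.
\end{itemize}
Therefore the contribution of round $j+1$ is
\[
\sum_{e\in E(H^{\os j})} th(e)d(e)\,d_{rep}\,ht_{cyc} = d_{rep}ht_{cyc},
\]
where the last equality is Proposition~\ref{prop:thickness}(1) applied to the $st$-graph $H^{\os j}$.

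Summing over the $n$ rounds gives $\sum_{e\in E'} th(e)d(e)ht(e) = n\,d_{rep}ht_{cyc}$, and Theorem~\ref{thm:c1(TC(dev))} yields the bound. The case $n=0$ is trivial.

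The main obstacle is the bookkeeping in the first step. We must check that the development's stage graph coincides with $H^{\os n}$ as a weighted $st$-graph, including metric, orientation and thickness. We also must check that the thickness and length assigned to each replaced edge $e\os e_{rep}$ at the time of replacement agree with the values used in $ht$ and in Proposition~\ref{prop:thickness}(1). Both follow from associativity of edge replacement and the multiplicativity formulas in Definition~\ref{D:EdgeRep}, and I would verify them by induction on $j$.
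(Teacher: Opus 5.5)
Your proposal is correct and follows the paper's proof essentially step for step. The paper likewise realizes $H^{\os m}$ and $(H^{\os m})_{sub}$ as interlaced stages of a single elementary development, and takes $E'$ to be the union over $m<n$ of the pairwise disjoint sets $E_{rep}((H^{\os m})_{sub})\subseteq\E_{cyc}$, on which $ht\equiv ht_{cyc}$. It then evaluates each round's contribution as $d_{rep}ht_{cyc}$ via Proposition~\ref{prop:thickness}(1) and invokes Theorem~\ref{thm:c1(TC(dev))}, just as you do; the disjointness of rounds, which you use implicitly when summing, holds because each edge of $\E$ is replaced at most once.
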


\begin{proof}
	Since restricted $\os$ products are, by definition, constructed via iterated edge replacement, and since cycle with handles are, by definition, constructed via edge replacement of elementary graphs, there is an elementary development $(G^{(n)})_{n\geq 0}$ and a pair of increasing, interlacing sequences of integers $0 = n_0 < n'_0 < n_1 < n'_1 < n_2 < n'_2 <\dots$ such that, for all $m \geq 0$,
	\begin{itemize}
		\item $H^{\os m} = G^{(n_m)}$ and
		\item $(H^{\os m})_{sub} = G^{(n'_{m+1})}$.
	\end{itemize}
	We use the notation $\E,\E_{cyc}$ and $th: \E \to [0,1]$, $d: \E \to [0,1]$, and $ht:\E_{cyc} \to [0,1]$ for this development, as previously defined in this section. Recall that $E_{rep}((H^{\os m})_{sub}) = \{e\os e_{rep}\}_{e\in E(H^{\os m})} \sbs E(H^{\os m}\os Pa)$. The following facts hold immediately from the definitions:
	\begin{itemize}
		\item $ht(e) = ht_{cyc}$ for all $e\in\E_{cyc}$.
		\item $E_{rep}((H^{\os m})_{sub}) \sbs \E_{cyc}$ for every $m\geq 0$.
		\item $E_{rep}((H^{\os m_1})_{sub}) \cap E_{rep}((H^{\os m_2})_{sub}) = \emptyset$ whenever $m_1\neq m_2$.
		\item For all $n-1\geq m\geq 0$ and $e'\in E_{rep}((H^{\os m})_{sub})$, it holds that $\{x_1(e'),x_2(e')\} \sbs V(H^{\os n})$.
	\end{itemize}
	Therefore, for any $n\geq 0$, these four items and Theorem~\ref{thm:c1(TC(dev))} imply the lower bound
	\begin{align*}
		c_1(\TC(H^{\os n})) &\overset{\text{Thm }\ref{thm:c1(TC(dev))}}{\geq} \frac{2-\sqrt{2}}{4}\sum_{m=0}^{n-1}\sum_{e'\in E_{rep}((H^{\os m})_{sub})} th(e')d(e')ht_{cyc} \\
		&= \frac{2-\sqrt{2}}{4}\sum_{m=0}^{n-1}\sum_{e\in E(H^{\os m})} th(e\os e_{rep})d(e\os e_{rep})ht_{cyc} \\
		&= \frac{2-\sqrt{2}}{4}\sum_{m=0}^{n-1}\sum_{e\in E(H^{\os m})} th(e)th_{Pa}(e_{rep})d(e)d_{Pa}(e_{rep})ht_{cyc} \\
		&= \frac{2-\sqrt{2}}{4}d_{rep}ht_{cyc}\sum_{m=0}^{n-1}\sum_{e\in E(H^{\os m})} th(e)d(e) \\
		&\overset{\text{Prop }\ref{prop:thickness}(1)}{=} \frac{2-\sqrt{2}}{4}d_{rep}ht_{cyc}\sum_{m=0}^{n-1} 1 \\
		&= \frac{2-\sqrt{2}}{4}d_{rep}ht_{cyc}\cdot n.
	\end{align*}
\end{proof}

For the diamond graph $D$, we have that $d_{rep} = ht_{cyc} = 1$, and for the Laakso graph $La$, we have that $d_{rep} = \frac{1}{2}$ and $ht_{cyc} = 1$. Therefore, we obtain the following corollary of Theorem~\ref{thm:c1(TC(cwh))}.

\begin{corollary}[$L_1$-Distortion of $\TC$(diamond graphs) and $\TC$(Laakso graphs)]\label{C:DiamLaak}
	For every $n\geq 0$,
	\begin{align*}
		c_1(\TC(D^{\os n})) &\geq \frac{2-\sqrt{2}}{4}n, \\
		c_1(\TC(La^{\os n})) &\geq \frac{2-\sqrt{2}}{8}n.
	\end{align*}
\end{corollary}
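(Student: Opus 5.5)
This corollary is an immediate specialization of Theorem~\ref{thm:c1(TC(cwh))}. The plan is to check that the diamond graph $D$ and the Laakso graph $La$ are cycles-with-handles in the sense of the preceding subsection, and then to compute the two parameters $d_{rep}$ and $ht_{cyc}$ for each. By Examples~\ref{E:Diam} and \ref{E:Laakso}, we have $D = Pa_D\cup_{e_{rep}} Cy_D$ and $La = Pa_{La}\cup_{e_{rep}} Cy_{La}$ with $Cy_{La}=Cy_D$. Here $Cy_D$ is an $st$-cycle consisting of two directed $st$-paths of two edges of length $\frac12$ each. Each path has three vertices, so the definition of an $st$-cycle is met, with thickness $\frac12$ on every edge.

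First I compute $d_{rep}$. For the diamond, $Pa_D$ is trivial, so its single edge $e_{rep}$ has length $d_{Pa_D}(s,t)=1$ by axiom \textbf{(a)}; hence $d_{rep}=1$. For Laakso, $e_{rep}$ is the middle edge of $Pa_{La}$, which has length $\frac12$, so $d_{rep}=\frac12$.

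Next I compute $ht_{cyc}$, which is the same for both graphs since $Cy_{La}=Cy_D$. On each path $\gamma_i$ the vertex maximizing the distance to $\{s_{Cy},t_{Cy}\}$ is the middle vertex, at distance $\frac12$ from both endpoints. This uses that every directed path is a geodesic (axiom \textbf{(b)}), so going the other way around the cycle is no shorter: it has length at least $\frac12+1$. Summing over the two paths gives $ht_{cyc}=\frac12+\frac12=1$.

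Finally, substituting into Theorem~\ref{thm:c1(TC(cwh))} gives $c_1(\TC(D^{\os n}))\ge \frac{2-\sqrt2}{4}\cdot 1\cdot 1\cdot n$ and $c_1(\TC(La^{\os n}))\ge \frac{2-\sqrt2}{4}\cdot\frac12\cdot 1\cdot n=\frac{2-\sqrt2}{8}n$, for all $n\ge 0$; the case $n=0$ is trivial. There is no real obstacle here. The only point requiring care is the distance computation inside $Cy$: $\dist_{Cy}$ is the geodesic metric of the cycle, and it is the middle vertices that realize the maximum $\frac12$.
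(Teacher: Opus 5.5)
Your proposal is correct and matches the paper's argument exactly: the corollary follows by substituting $d_{rep}=ht_{cyc}=1$ for $D$ and $d_{rep}=\frac12$, $ht_{cyc}=1$ for $La$ into Theorem~\ref{thm:c1(TC(cwh))}. Your check that the middle vertices of $Cy_D$ attain the maximal distance $\frac12$ simply spells out what the paper asserts without comment.
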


Now we turn to our second main result:

\begin{theorem1.2}[$L_1$-Distortion of $\TC(G^{\os n})$]\label{T:1.2Rep}
	Let $G$ be any $st$-graph that is not an $st$-path and has at least three vertices. Then there exists a constant $C<\infty$ (depending on $G$ but not $n$), such that $c_1(\TC(G^{\os n})) \geq C^{-1}\log |V(G^{\os n})|$ for every $n\geq 0$.    
\end{theorem1.2}

\begin{proof} This result is an immediate consequence of Theorem \ref{thm:c1(TC(cwh))} if $G$ contains isometrically an $st$-graph $H$ which is a cycle-with-handles, because in this case the containment $V(H^{\os n}) \sbs V(G^{\os n})$ is isometric for every $n\geq 0$. One can find such a subgraph $H$ by choosing a shortest cycle in $G$ and attaching two handles to it. It does not solve the problem, only if one ``side'' of the cycle is an edge. In such a case, we consider $G^{\os 2}$. It contains a subdivision of the subgraph $H$ that embeds into $G^{\os 2}$ isometrically.
\end{proof}

\section{Reduction to Linear Maps} \label{sec:NSred}

The goal of this section is to describe a proof, for a finite metric space $X$, of the equality \eqref{E:NSred} stated in the Introduction \S\ref{sec:intro}. Recall that \eqref{E:NSred} states
\[c_1(\EMD(X)) = c_1(\tc(X)) = c_{1,{\rm lin}}(\tc(X)),
\]
where $c_{1,{\rm lin}}(\tc(X))$ denotes the infimal distortion among all $m\in\N$ and all {\it linear} embeddings $f: \tc(X) \to \ell_1^m$.

The proof essentially belongs to Naor and Schechtman \cite[Lem\-ma~3.1]{NS07}, although they stated it only for a special case.

First, we note that the inequalities 
\[c_1(\EMD(X)) \le c_1(\tc(X)) \le c_{1,{\rm lin}}(\tc(X))
\]
are immediate. It suffices only to prove that for each $\ep>0$ we have
\[c_{1,{\rm lin}}(\tc(X))\le c_1(\EMD(X))+\eps
\]

We denote the unit ball of a Banach space $V$ by $B_V$.

Let $\eps>0$ and let $F:\EMD(X)\to L_1$ be a non-contractive embedding into some $L_1$-space satisfying $||F(\mu)-F(\nu)||_{L_1}\le D\EMD(\mu,\nu)$, where $D<c_1(\EMD(X))+\frac{\eps}{3}$. We use this embedding to construct a linear embedding of 
$B_{\tc(X)}$ into the $L_1$-space with the same distortion. 

Denote by $r$ the minimal distance between the elements of $X$ and let $n=|X|$. Then the formula
\begin{equation} \label{eq:EMD-TC}
   g(\mu)=\frac{r}n\mu+u_X 
\end{equation}
defines a map $g:B_{\tc(X)}\to\EMD(X)$, where $u_X$ is the uniform probability measure on $X$. Note that no measure $\mu \in \tc(X)$ with norm $\leq 1$ can assign to any point mass more than $r^{-1}$, because the mass at that point must be transported a distance at least $r$ in any transportation plan. On the other hand, $u_X$ assigns mass $\frac1n$ to every point. Therefore, $g(\mu)$ is indeed a probability measure on $X$.

Furthermore, it is easy to see that
\[\EMD(g(\mu),g(\nu))=\frac rn\|\mu-\nu\|_\tc.\]
Therefore, $H:B_{\tc(X)}\to L_1$ defined as $H = \frac nr\cdot F\circ g$ is a non-contractive map with $\|H\tau-H\sigma\|_{L_1}\le D\|\tau-\sigma\|_\tc$.

We now utilize the following special case of Bourgain's Discretization Theorem (see \cite{Bou87}, \cite{GNS12}, and \cite[Chapter~9]{Ost13} for the full statement and proof of Bourgain's Discretization Theorem).

\begin{theorem}[{\cite[Theorem~1.3]{GNS12}}] \label{thm:discretization}
	For every $\beta>0$ and every finite-dimensional Banach space $V$, there exists $\delta>0$ such that, for all $\delta$-nets $N_\delta$ in $B_V$, there exists a linear isomorphic embedding of $V$ into an $L_1$-space with distortion at most $c_1(N_\delta)+\beta$.
\end{theorem}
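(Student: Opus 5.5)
The plan is to build the linear embedding as an \emph{average of derivatives}. This averaging trick is available precisely because the target is an $L_1$-space, and it avoids any differentiability theory for $L_1$-valued Lipschitz maps, which in general fails since $L_1$ lacks the Radon--Nikod\'ym property.

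Let $m=\dim V$ and fix a $\delta$-net $N_\delta\sbs B_V$. Let $\phi:N_\delta\to L_1(M)$ satisfy $\|u-v\|\le\|\phi(u)-\phi(v)\|_1\le D\|u-v\|$ with $D<c_1(N_\delta)+\beta/3$.

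\textbf{Step 1 (extension and smoothing).} First I would extend $\phi$ to a map $\Phi$ on $B_V$ using a partition of unity subordinate to the balls $B(u,2\delta)$, $u\in N_\delta$. At scales $\gg\delta$, $\Phi$ is Lipschitz with constant $\le D(1+o(1))$. At small scales it is Lipschitz with constant at most $C(m)D$, and it satisfies $\|\Phi(x)-\phi(u)\|_1\le C(m)D\delta$ whenever $\|x-u\|\le\delta$.

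Next I would convolve with a smooth bump $\kappa_\rho$ supported in the ball of radius $\rho$, where $\delta\ll\rho\ll1$, giving $F=\Phi*\kappa_\rho$ on $(1-\rho)B_V$. Because the convolution is a Bochner integral against a smooth kernel, $F$ is Fr\'echet differentiable, with $dF_x(y)=-\int\Phi(x-z)\,\partial_y\kappa_\rho(z)\,dz$. Since the convolution is an average of translates of $\Phi$, we have $\|F(x)-F(x')\|_1\le \sup_z\|\Phi(x-z)-\Phi(x'-z)\|_1$. Applying this for small $\|x-x'\|$, where the error terms are comparable to $D\,C(m)\delta/\rho$, I expect $\|dF_x\|\le D(1+C(m)\delta/\rho)$.

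\textbf{Step 2 (the linear map).} Let $\lambda$ be normalized Lebesgue measure on the ball $B'=\tfrac12B_V$. Define $T:V\to L_1(\lambda\times M)$ by $(Ty)(x,\omega)=dF_x(y)(\omega)$. This map is clearly linear, and $\|Ty\|=\int_{B'}\|dF_x(y)\|_1\,d\lambda(x)\le D(1+C(m)\delta/\rho)\|y\|$.

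For the lower bound, fix $\|y\|=1$ and a scale $t$ with $\rho\ll t\le 1/4$. By the fundamental theorem of calculus and the triangle inequality,
\[
\|Ty\|\ge \frac1t\int_{B'}\|F(x+ty)-F(x)\|_1\,d\lambda(x)
\]
(averaging $s\mapsto dF_{x+sy}(y)$ over $s\in[0,t]$ and using the translation-invariance of $\lambda$ up to a boundary error of order $t\,m$). Then I replace $F$ by $\Phi$ at cost $O(C(m)D\rho)$, and $\Phi$ by $\phi$ at nearest net points at cost $O(C(m)D\delta)$. Noncontractivity of $\phi$ then gives
\[
\|F(x+ty)-F(x)\|_1\ge t-O\big(C(m)D(\rho+\delta)\big).
\]
Hence
\[
\|Ty\|\ge 1-O\big(C(m)D(\rho+\delta)/t\big)-O(mt).
\]

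\textbf{Step 3 (choice of scales).} I would choose $t$ small depending on $\beta$ and $m$, then $\rho\ll t$, and then $\delta\ll\rho$. With these choices both error factors are within $1+\beta/(3D)$, so after rescaling $T$ has distortion $\le D+\beta$.

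The main obstacle is the bookkeeping in Step 2. The lower bound must be carried out on an \emph{average} over $x$ rather than pointwise, and the boundary and translation errors from restricting to $B'$ must be controlled. The dimension-dependent constants must be absorbed by making $\delta$ small only \emph{after} $t$ and $\rho$ are fixed. This ordering is exactly why $\delta$ depends on $V$ and $\beta$ and not on $N_\delta$.
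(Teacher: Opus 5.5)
The paper gives no proof of this statement; it quotes it from [GNS12, Theorem 1.3]. Your architecture is the natural $L_1$-specific route: extend from the net, smooth at scale $\rho$, and average $dF_x(y)$ over $x\in B'$ to get a linear map into $L_1(\lambda\times M)$, so no single ``good point'' has to be found. Your lower bound in Step~2 (fundamental theorem of calculus, triangle inequality, comparison with the net at scale $t$) is sound.

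The gap is the upper bound $\|dF_x\|\le D(1+C(m)\delta/\rho)$. This is exactly the step that keeps the distortion at $D+\beta$ rather than $C(m)D$, and your argument does not give it. The inequality $\|F(x)-F(x')\|_1\le\sup_z\|\Phi(x-z)-\Phi(x'-z)\|_1$ only transfers to $F$ the Lipschitz behaviour of $\Phi$ at the scale $\|x-x'\|$. As $\|x-x'\|\to0$ that is the small-scale constant $C(m)D$, because the additive defect in $\|\Phi(a)-\Phi(b)\|_1\le D\|a-b\|+O(D\delta)$ does not shrink with $\|a-b\|$. No $\delta/\rho$ term can come out of that inequality.

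Any gain from smoothing has to come from the cancellation $\int\partial_y\kappa_\rho=0$. Even that, combined with the triangle inequality,
\[
\|dF_x(y)\|_1=\Big\|\int\big(\Phi(x-z)-\Phi(x)\big)\,\partial_y\kappa_\rho(z)\,dz\Big\|_1\le D\int\|z\|\,|\partial_y\kappa_\rho(z)|\,dz+O(D\delta)\,\|\partial_y\kappa_\rho\|_{L_1},
\]
loses a dimensional factor in the main term. For $V=\ell_2^m$ and any radial decreasing kernel, $\int\|z\|\,|\partial_y\kappa_\rho(z)|\,dz\approx\sqrt{2m/\pi}\,\|y\|$, which would only give distortion of order $\sqrt m\,D$.

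What makes the bound true is a one-dimensional argument. Take $\kappa_\rho$ quasi-concave, e.g.\ radially decreasing for a Euclidean norm and supported in $\rho B_V$. Write $z=w+sy$ with $w$ in a complement $W$ of $\R y$ and $dz=c_W\,dw\,ds$. Each section $k_w(s)=\kappa_\rho(w+sy)$ is then unimodal with a peak at some $s_w$. Since $\int k_w'=0$, you may subtract $\Phi(x-w-s_wy)$ on each line. The identities $\int|s-s_w|\,|k_w'(s)|\,ds=\int k_w$ and $\int|k_w'|=2\max k_w$ then give, for $\|y\|=1$,
\[
\|dF_x(y)\|_1\le D+O(D\delta)\,c_W\int_W\max_s k_w\,dw\le D\big(1+C_V\delta/\rho\big).
\]
With this in hand the upper bound in Step~2 holds, and the boundary term in your lower bound becomes $O(mtD)$ rather than $O(mt)$.

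One further point on quantifiers: every error term carries a factor $D$, which depends on the net. For $\delta$ to depend only on $V$ and $\beta$, record that one may take $D<c_1(V)+\beta/3\le\sqrt m+\beta/3$, by John's theorem and the isometric embedding of $\ell_2$ into $L_1$. The substantive difficulty is therefore the upper bound, not the lower-bound bookkeeping you flagged as the main obstacle.
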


The estimates for the mapping $H$ imply that $c_1(N_\delta)\leq D$ for each $\delta>0$ and $\delta$-net $N_\delta \sbs B_{\tc(X)}$. Therefore, applying Theorem~\ref{thm:discretization}, we get that $\tc(X)$ linearly isomorphically embeds into an $L_1$-space with distortion $\leq D+\frac{\eps}{3}$. Then \cite[Theorem~F.2(i)]{BL00} implies that there is some $m\in\N$ such that $\tc(X)$ linearly isomorphically embeds into $\ell_1^m$ with distortion $\leq D+\frac{2\eps}{3}$. Hence, we conclude $c_{1,{\rm lin}}(\tc(X)) \leq D+\frac{2\eps}{3} < c_1(\EMD(X)) + \eps$, completing the proof. \\

Recall that $M_2^s$ is the subset of $\EMD(\R^2)$ consisting of uniform distributions on $s$-point subsets of $\R^2$.
\begin{theorem}\label{T:EMDuniformR2}
$c_1(M_2^s) = \Omega(\log s)$.
\end{theorem}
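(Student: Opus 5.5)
The plan is to transfer the grid bound of Theorem~\ref{thm:c1(TC(Gr))2} to $M_2^s$, using the reduction of Naor--Schechtman \cite[\S4]{NS07} as refined by Giladi--Naor--Schechtman \cite{GNS12}. Those authors showed that a lower bound $D_n$ on $c_1(\EMD(\{0,\dots,n\}^2))$ yields a lower bound on $c_1(M_2^s)$ for $s$ polynomial in $n$. The mechanism is that, after rescaling, a probability measure on the grid with rational masses of common denominator $N$ can be approximated by a uniform distribution on an $s$-point subset of $\R^2$, and this approximation changes EMD by a negligible error.

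\textbf{Step 1: embed a net of $\EMD(Gr_n)$ into $M_2^s$.} Fix $N$ and consider the measures on $\{0,\dots,2^n\}^2$ whose masses are multiples of $1/N$. Each such measure is represented by $N$ points: at grid point $x$ we place $N\mu(x)$ distinct points, all inside a tiny disc of radius $\eta$ around $x$. Call the resulting uniform distribution $\Phi(\mu)\in M_2^N$. Then
\[
|\EMD(\Phi(\mu),\Phi(\nu))-\|\mu-\nu\|_{\TC}|\le 2\eta .
\]
This holds because the $\ell_1$ and $\ell_2$ metrics are equivalent up to the constant $\sqrt2$, and distortion only changes by that absolute constant. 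It is cleanest to note that $c_1$ of the grid with the $\ell_2$ metric differs from the $\ell_1$ version by at most a factor $2$, since the identity between the two metrics has distortion $\sqrt2$ and this transfers to $\TC$.

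\textbf{Step 2: pass to a net and linearize.} The $1/N$-grained measures form a $\delta$-net of $\EMD(Gr_n)$ with $\delta\approx |V|/N$. The argument of \S\ref{sec:NSred} (Theorem~\ref{thm:discretization}) shows that the $L_1$-distortion of a sufficiently fine net of $B_{\TC(X)}$ controls $c_{1,{\rm lin}}(\TC(X))$. Hence, for $N$ large enough depending on $n$, we get
\[
c_1(M_2^N)\ge c\cdot c_1(\TC(Gr_n))-o(1)\ge c'n .
\]
The error $2\eta$ is made negligible relative to the minimal distance $r/N$ used in the map $g$ of \eqref{eq:EMD-TC}.

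\textbf{Step 3: bookkeeping, the main obstacle.} Theorem~\ref{thm:discretization} gives no explicit $\delta$, so Step 2 alone produces no relation between $s=N$ and $n$, and therefore no $\Omega(\log s)$ bound. To get one, we need the quantitative discretization from \cite{GNS12}, which is stated specifically for $\EMD$ over grids and was already used there to obtain $\Omega(\sqrt{\log s})$ from the Naor--Schechtman bound. That argument requires only $\delta$ polynomially small in the grid size, giving $s\le 2^{O(n)}$. I would therefore follow \cite[Corollary~1.5]{GNS12} verbatim, replacing their input $\sqrt{\log n}$ by our linear bound from Theorem~\ref{thm:c1(TC(Gr))2}. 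This gives $c_1(M_2^s)\ge c\,n\ge c''\log s$ for $s=s(n)$.

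\textbf{Step 4: intermediate values of $s$.} Finally, we fill in the values of $s$ between $s(n)$ and $s(n+1)$. Monotonicity in $s$ comes from padding: we add to every configuration a cluster of extra points placed far away and identical across all configurations. This changes all distances by a common rational factor, and since $s(n)$ grows only exponentially, the bound $\Omega(\log s)$ holds for every $s$.
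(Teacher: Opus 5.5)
Your proposal is correct and is essentially the paper's argument: approximate $\frac1s$-grained measures on a grid by uniform distributions on $s$ nearby satellite points, use the polynomial-$\delta$ $L_1$ discretization of \cite{GNS12} so that the grid side can be taken $\approx s^{1/10}$, and feed in Theorem~\ref{thm:c1(TC(Gr))2}. The paper makes your Step~2 precise by a nearest-neighbour projection from a maximal $\delta$-separated subset of $B_{\TC}$ onto the grained measures (bi-Lipschitz once the density is at most a third of the separation), and your padding Step~4 is valid but unnecessary, since the bound holds for every $s$ directly by choosing the grid side as a function of $s$.
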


\begin{proof}
Let $G_n$ denote the $n \times n$ grid graph, with unit edge lengths. We think of $G_n$ as a subset of $\Z^2 \sbs \R^2$ with the $\ell_1$-metric. Let $\tilde{M}_2^s$ denote the subset of $\EMD(G_n) \sbs \EMD(\R^2)$ consisting of probability measures $\mu$ such that $\mu(\{x\}) \in (\frac{1}{s}\Z) \cap [0,1]$ for all $x\in G_n$. It is easy to see that the closure of $M_2^s$ in $\EMD(\R^2)$ contains $\tilde{M}_2^s$, and hence that $c_1(M_2^s) \geq c_1(\tilde{M}_2^s)$. Indeed, let $\mu \in \tilde{M}_2^s$ and let $\eps>0$. Suppose that $\mu$ is supported on the points $x_1 \dots x_k \in G_n$. For each $x_i \in \{x_1\dots x_k\}$, let $m_i \in \Z \cap [1,s]$ such that $\mu(\{x_i\}) = \frac{m_i}{s}$. For each $x_i \in \{x_1\dots x_k\}$, choose ``satellite" points $y_i^1, \dots y_i^{m_i} \in \R^2$ of $x_i$ such that
\begin{itemize}
    \item the collection $\{y_i^j\}_{i=1,j=1}^{k,m_i}$ is distinct and
    \item $\|x_i - y_i^j\| < \eps$ for all $i \leq k$ and all $j \leq m_i$.
\end{itemize}
Then it is easy to see that the cardinality of the set $\{y_i^j\}_{i=1,j=1}^{k,m_i}$ is $s$ and that the uniform distribution on this set has EMD-distance to $\mu$ at most $\eps$. 

It is also easy to see that $\tilde{M}_2^s$ is $\frac{2n^3}{s}$-dense in $\EMD(G_n)$. Indeed, given $\mu \in \EMD(G_n)$, obtain $\mu_1$ by rounding up each value $\mu(\{x\})$ to the nearest number $\mu_1(\{x\})$ in $(\frac{1}{s}\Z) \cap [0,1]$. The value of $\mu$ at any of the $n^2$ points changes by at most $\frac{1}{s}$. However, it is generally the case that we rounded up too much so that $\mu_1$ is not a probability measure. We know that $\mu_1(G_n) \in (\frac{1}{s}\Z) \cap [1,1+\frac{n^2}{s}]$, so that the remainder $R = \mu_1(G_n) - 1$ belongs to  $(\frac{1}{s}\Z) \cap [0,\frac{n^2}{s}]$, which implies $sR \in \Z \cap [0,n^2]$. Choose any collection of $sR$ points $x_1 \dots x_{sR} \in G_n$, and define $\mu_2$ by $\mu_2(\{x\}) = \mu_1(\{x\}) - \frac{1}{s}$ for $x \in \{x_1,\dots x_{sR}\}$ and $\mu_2(\{x\}) = \mu_1(\{x\})$ for $x \not\in \{x_1,\dots x_{sR}\}$. Then we have that
\begin{itemize}
    \item $\mu_2$ takes values in $(\frac1s\Z) \cap [0,1]$,
    \item $\mu_2$ is a probability measure, and
    \item $|\mu(\{x\}) - \mu_2(\{x\})| \leq \frac{1}{s}$ for any $x\in G_n$.
\end{itemize}
These imply that $\mu_2 \in \tilde{M}_2^s$ and $\|\mu-\mu_2\|_{\rm TV} \leq \frac{n^2}{s}$. Then we use the estimate $\|\mu-\mu_2\|_{\TC} \leq \diam(G_n)\|\mu-\mu_s\|_{\rm TV} \leq 2n \frac{n^2}{s}$ to conclude $\frac{2n^3}{s}$-density.

Let $N_\delta$ be a maximal $\delta$-separated subset of $B_{\TC(G_n)}$, with $\delta>0$ to be chosen later. By the formula \eqref{eq:EMD-TC}, there exists a scaled isometric embedding $g: B_{\TC(G_n)} \hookrightarrow \EMD(G_n)$ with scaling factor $\frac{1}{n^2}$. Hence, $g(N_\delta)$ is a maximal $\frac{\delta}{n^2}$-separated subset of $g(B_{\TC(G_n)})$. Let $\pi: g(N_\delta) \to \tilde{M}_2^s$ be a nearest neighbor projection. Since $\tilde{M}_2^s$ is $\frac{2n^3}{s}$-dense in $\EMD(G_n)$ and since $g(N_\delta)$ is $\frac{\delta}{n^2}$-separated, if we assume that $\frac{2n^3}{s} \leq \frac{1}{3}\frac{\delta}{n^2}$, then $\pi$ will be a 6-biLipschitz embedding. Towards this end, we take $\delta = \frac{6n^5}{s}$ and consequently get $c_1(\tilde{M}_2^s) \geq \frac{1}{6}c_1(g(N_\delta)) = \frac{1}{6}c_1(N_\delta)$. By \cite[Corollary~1.4]{GNS12}, if $\frac{6n^5}{s} = \delta = O(\frac{1}{n^5})$, then we have that $c_1(N_\delta) \geq \frac12c_1(\TC(G_n))$, which equals $\Omega(\log n)$ by Theorem~\ref{thm:c1(TC(grids))}. Put another way, if $s = \Omega(n^{10})$, then $c_1(N_\delta) = \Omega(\log n)$. In summary, if $s = \Omega(n^{10})$, then
$$c_1(M_2^s) \geq c_1(\tilde{M}_2^s) \geq \frac{1}{6}c_1(g(N_\delta)) = \frac{1}{6}c_1(N_\delta) = \Omega(\log n).$$

This implies $c_1(M_2^s) = \Omega(\log s)$.
\end{proof}

\section{Acknowledgments}
This collaboration began during the Workshop in Analysis and Probability at Texas A\&M University in July 2024. We would like to thank the organizers of the workshop and the support of the National Science Foundation under the grant DMS-1900844.


\begin{thebibliography}{GNRS04}
	
	\begin{small}
	
	\bibitem[AIK08]{AIK08}
A.~Andoni, P.~Indyk, R.~Krauthgamer.
\newblock Earth mover distance over high-dimensional spaces. 
\newblock In {\it Proc. of the 19th Ann. ACM-SIAM Symp, on Discrete Algorithms}, pages 343--352, 2008.

\bibitem[BI14]{BI14} A.~Ba\v ckurs, P.~Indyk, Better embeddings for planar earth-mover distance over sparse sets. {\it Computational geometry} (SoCG'14), 280–289, ACM, New York, 2014.

\bibitem[Bar96]{Bar96} Y. Bartal, Probabilistic approximation of metric spaces and its algorithmic applications,
In: {\it 37th Annual Symposium on Foundations of Computer Science}
(Burlington, VT, 1996),  184--193, IEEE Comput. Soc. Press, Los
Alamitos, CA, 1996.

\bibitem[BGS23]{BGS23} 
F.~Baudier, C.~Gartland, Th.~Schlumprecht. 
\newblock $L_1$-distortion of Wasserstein metrics into $L_1$: A tale of two dimensions. 
\newblock {\it Trans. Amer. Math. Soc.} Ser. B {\bf 10} (2023), 1077--1118.

\bibitem[BL00]{BL00} Y.~Benyamini, J.~Lindenstrauss, {\it Geometric nonlinear
	functional analysis}. Vol. {\bf 1}. American Mathematical Society
Colloquium Publications, {\bf 48}. American Mathematical Society,
Providence, RI,
2000.

\bibitem[BO79]{BO79} A.\,N. Berker, S. Ostlund, \emph{Renormalisation-group calculations of finite systems: order parameter and specific heat for epitaxial ordering,} Journal of Physics C: Solid State Physics, \textbf{ 12} (1979), 4961--4976.

\bibitem[BM08]{BM08} J.\,A.~Bondy, U.\,S.\,R.~Murty,  {\it Graph theory.} Graduate Texts in Mathematics, {\bf 244}. Springer, New York, 2008.

\bibitem[Bou87]{Bou87} 
J.~Bourgain.  
\newblock Remarks on the extension of Lipschitz maps defined on discrete sets and uniform homeomorphisms. 
\newblock In: {\it Geometrical aspects of functional analysis} (1985/86), 157--167, {\it Lecture Notes in Math.}, 
{\bf 1267}, Springer, Berlin, 1987.

\bibitem[BC05]{BC05} B.~Brinkman, M.~Charikar,
On the impossibility of dimension reduction in $\ell\sb 1$, {\it
J. ACM}, {\bf 52} (2005), no. 5, 766--788.

\bibitem[Cha02]{Cha02} 
M.~Charikar. 
\newblock Similarity estimation techniques from rounding algorithms. 
\newblock In {\it Proc. of the 34th Ann. ACM Symp. on Theory of Computing}, pages 380--388, 2002.

\bibitem[CJLW22]{CJLW22}
X.~Chen, R.~Jayaram, A.~Levi, E.~Waingarten. 
\newblock New streaming algorithms for high dimensional EMD and MST. 
\newblock In {\it Proc. of the 54th Ann. ACM Symp. on Theory of Computing}, pages 222--233, 2022.

\bibitem[CK13]{CK13} J.~Cheeger, B.~Kleiner, Realization of metric spaces as inverse limits, and
bilipschitz embedding in $L_1$, {\it  Geom. Funct. Anal.}, {\bf
	23} (2013), 96--133.


\bibitem[DKO20]{DKO20} S.\,J. Dilworth, D.~Kutzarova,
M.\,I.~Ostrovskii, Lipschitz-free spaces on finite metric spaces,
{\it Canad. J. Math.},  {\bf 72} (2020), 774--804.

\bibitem[DKO21]{DKO21} S.\,J. Dilworth, D.~Kutzarova,
M.\,I.~Ostrovskii, Analysis on Laakso graphs with application to
the structure of transportation cost spaces. {\it Positivity} {\bf
	25} (2021), no. 4, 1403--1435.

\bibitem[FRT04]{FRT04} J.~Fakcharoenphol, S.~Rao,
	K.~Talwar, A tight bound on approximating arbitrary metrics by
	tree metrics, {\it J. Comput. System Sci.}, {\bf 69} (2004), no.
	3, 485--497.

\bibitem[GNS12]{GNS12} O.~Giladi, A.~Naor, G.~Schechtman, Bourgain's discretization
theorem, {\it Annales Mathematiques de la faculte des sciences de
	Toulouse}, vol. {\bf XXI} (2012), no. 4, 817--837.
	
\bibitem[GNRS04]{GNRS04} A.~Gupta, I.~Newman, Y.~Rabinovich,
	A.~Sinclair, Cuts, trees and $\ell_1$-embeddings of graphs, {\it
		Combinatorica}, {\bf 24} (2004) 233--269.

\bibitem[Hor07]{Hor07} K.\,J.~Horadam, 
{\it Hadamard matrices and their applications}. Princeton University Press, Princeton, NJ, 2007.

\bibitem[IW91]{IW91} M.~Imase, B.\,M.~Waxman, 
		Dynamic Steiner tree problem.
		{\it SIAM J. Discrete Math.} {\bf 4}  (1991), no. 3, 369--384.

	\bibitem[IT03]{IT03} P.~Indyk, N.~Thaper, 
\newblock Fast image retrieval via embeddings,
\newblock in: {\it ICCV 03: Proceedings of the 3rd International Workshop on Statistical and Computational Theories of Vision},  2003.

\bibitem[JWZ24]{JWZ24}
R.~Jayaram, E.~Waingarten, T.~Zhang. 
\newblock Data-dependent LSH for the earth mover’s distance. 
\newblock In {\it Proc. of the 56th Ann. ACM Symp. on Theory of Computing}, pages 800--811, 2024.

\bibitem[JS09]{JS09} W. B. Johnson, G. Schechtman, \emph{Diamond graphs and super-reflexivity,}  J. Topol. Anal., \textbf{
			1} (2009), no. 2, 177--189.
		
\bibitem[Kan42]{Kan42} L.\,V.~Kantorovich, On mass transportation
	(Russian), {\it Doklady Akad. Nauk SSSR}, (N.S.) {\bf 37}, (1942),
	199--201.
	
\bibitem[KG49]{KG49} L.\,V.~Kantorovich, M.\,K.~Gavurin, Application of mathematical methods in the analysis of cargo flows (Russian),
in: {\it Problems of improving of transport efficiency}, USSR
Academy of Sciences Publishers, Moscow, 1949, pp.
110--138.

\bibitem[KG81]{KG81} M.~Kaufman, R.\,B.~Griffiths
Exactly soluble Ising models on hierarchical lattices, 
Phys. Rev. B 24, 496-498

	\bibitem[KN06]{KN06} S. Khot, A.~Naor, Nonembeddability
	theorems via Fourier analysis, {\it Math. Ann.}, {\bf 334} (2006),
	821--852.

\bibitem[Kis75]{Kis75} S.~V.~Kislyakov,
Sobolev imbedding operators and the nonisomorphism of
certain Banach spaces, {\it Funct. Anal. Appl.}
{\bf 9} (1975), 290--294.


	\bibitem[KT02]{KT02} J.~Kleinberg,  \'E.~Tardos, Approximation algorithms for classification problems with pairwise relationships: metric labeling and Markov random fields. {\it J. ACM} {\bf 49} (2002), no. 5, 616–639.

\bibitem[Laa02]{Laa02} T. J. Laakso, \emph{Plane with $A_\infty$-weighted metric not bi-Lipschitz embeddable to $\mathbb{R}^N$,} 
Bull. London Math. Soc. \textbf{ 34} (2002), no. 6, 667–676.

\bibitem[LP01]{LP01} U. Lang, C. Plaut, \emph{Bilipschitz embeddings of metric
		spaces into space forms,}  Geom. Dedicata, \textbf{ 87}  (2001),  		285--307.
    
\bibitem[LN04]{LN04} J. R. Lee, A.~Naor, Embedding the diamond graph in $L_p$ and dimension reduction in $L_1$. {\it Geom. Funct. Anal.} {\bf 14} (2004), no. 4, 745–747. 
            
\bibitem[LR10]{LR10} J. R. Lee, P. Raghavendra, \emph{Coarse differentiation and
multi-flows in planar graphs,}  Discrete Comput. Geom. \textbf{  43} (2010), no. 2, 346--362.



\bibitem[MN11]{MN11} J. Matou\v sek (Editor), starting June 2010 maintained jointly with A.~Naor,  \emph{Open problems
		on embeddings of finite metric spaces,} last update August 2011,
		available from: \url{http://kam.mff.cuni.cz/~matousek/}.

    
	\bibitem[NPS20]{NPS20} A. Naor, G. Pisier,  G. Schechtman, Impossibility of dimension reduction in the nuclear norm. {\it Discrete Comput. Geom.} {\bf 63} (2020), no. 2, 319–345.
	
	
	\bibitem[NS07]{NS07} A. Naor, G. Schechtman, Planar Earthmover is not in
	$L_1$, {\it SIAM J. Computing}, {\bf 37} (2007), 804--826.

\bibitem[NY22]{NY22} A.~Naor, R.~Young, Foliated corona decompositions. {\it Acta Math.} {\bf 229} (2022), no. 1, 55–200. 
	
	\bibitem[NR03]{NR03}  I.~Newman, Y.~Rabinovich,
	Lower bound on the distortion of embedding planar metrics into
	Euclidean space, {\it Discrete Comput. Geom.} {\bf 29} (2003), no.
	1, 77--81.
	
\bibitem[Ost05]{Ost05} 
M.\,I.~Ostrovskii. 
\newblock Sobolev spaces on graphs.
\newblock {\it Quaestiones Mathematicae}, {\bf 28} (2005), 501--523.

		
\bibitem[Ost13]{Ost13} M. I. Ostrovskii, \emph{Metric Embeddings: Bilipschitz and Coarse Embeddings into Banach
			Spaces}, de Gruyter Studies in Mathematics, \textbf{ 49}. Walter de Gruyter \&\ Co., Berlin, 2013.

\bibitem[OR17]{OR17} M. I. Ostrovskii, B. Randrianantoanina, \emph{A new approach to low-distortion embeddings of finite metric spaces into non-superreflexive Banach spaces,}   J. Funct. Anal. \textbf{ 273} (2017), no. 2, 598--651.
				    
\bibitem[RTG98]{RTG98} 
Y.~Rubner, C.~Tomasi, L.\,J.~Guibas. 
\newblock A metric for distributions with applications to image databases.
\newblock In {\it Proc. of the 6th Int'l Conf. on Computer Vision}, pages 59--66, 1998.

\bibitem[RTG00]{RTG00} 
Y.~Rubner, C.~Tomasi, L.\,J.~Guibas. 
\newblock The earth mover’s distance as a metric for image retrieval.
\newblock{\it International journal of computer vision,} 40:99–121, 2000.

\bibitem[Ser03]{Ser03} Jean-Pierre Serre, {\it Trees}. Translated from the French original by John Stillwell. Corrected 2nd printing of the 1980 English translation. Springer Monographs in Mathematics. Springer-Verlag, Berlin, 2003.

\bibitem[Vil09]{Vil09}
	C.~Villani.
	\newblock {\it Optimal Transport: Old and New}.
	\newblock Grundlehren der mathematischen Wissenschaften, {\bf 338}, Springer, 2008.

\end{small}

\end{thebibliography}
\end{document}